\documentclass[a4paper,11pt,reqno]{amsart}
\usepackage{amsfonts,amsmath,amssymb}
\usepackage{graphicx}
\usepackage{fixmath}
\usepackage{hyperref}
\usepackage{tikz}
\usetikzlibrary{positioning}
\usepackage{cite}
\usepackage{a4wide}
\usepackage{cancel}
\usepackage{mathtools}

\newtheorem{theo}{Theorem}
\newtheorem{corollary}{Corollary}

\newtheorem{lemma}{Lemma}

\newtheorem{example}{Example}


\newcommand{\Prob}{\mathbb{P}}
\newcommand{\Ex}{\mathbb{E}}

\DeclareMathOperator{\Aut}{\mathrm{Aut}}

\begin{document}
\thispagestyle{plain}
\title{Distinct Fringe Subtrees in Random Trees}

\author{Louisa Seelbach Benkner and Stephan Wagner}

\email{seelbach@eti.uni-siegen.de, stephan.wagner@math.uu.se, swagner@sun.ac.za}

\thanks{A short version of this paper appeared in the Proceedings of LATIN 2020 \cite{SeelbachWagner20}. \newline
This project has received funding from the  European Unions Horizon 2020 research and innovation programme under the Marie Sk\l odowska-Curie grant agreement No 731143, the DFG research project
LO 748/10-2 (QUANT-KOMP) and the Knut and Alice Wallenberg Foundation.}

\begin{abstract}
A fringe subtree of a rooted tree is a subtree induced by one of the vertices and all its descendants. We consider the problem of estimating the number of distinct fringe subtrees in two types of random trees: simply generated trees and families of increasing trees (recursive trees, $d$-ary increasing trees and generalized plane-oriented recursive trees). We prove that the order of magnitude of the number of distinct fringe subtrees (under rather mild assumptions on what `distinct' means) in random trees with $n$ vertices is $n/\sqrt{\log n}$ for simply generated trees and $n/\log n$ for increasing trees.\\\\
\noindent\textbf{Keywords: }{fringe subtrees, simply generated trees, increasing trees, tree compression}
\end{abstract}
\maketitle   
  
\section{Introduction}

A subtree of a rooted tree that consists of a vertex and all its descendants is called a \emph{fringe subtree}. Fringe subtrees are a natural object of study in the context of random trees, and there are numerous results for various random tree models, see for example \cite{aldous91, dennertgr10, devroye14, FengM10}.

Fringe subtrees are of particular interest in computer science: One of the most important and widely used lossless compression methods for rooted trees is to represent a tree as a directed acyclic graph, which is obtained by merging vertices that are roots of identical fringe subtrees. This compressed representation of the tree is often shortly referred to as \emph{minimal DAG} and its size (number of vertices) is the number of distinct fringe subtrees occurring in the tree. Compression by minimal DAGs has found numerous applications in various areas of computer science, as for example in compiler construction \cite[Chapter~6.1 and 8.5]{AhoSU86}, unification \cite{PatersonW78}, symbolic model checking (binary decision diagrams) \cite{Bry92}, information theory \cite{GanardiHLS19, ZhangYK14} and XML compression and querying \cite{BuGrKo03,FrGrKo03}.

In this work, we investigate the number of fringe subtrees in random rooted trees. So far, this problem has mainly been studied with respect to the number of \emph{distinct} fringe subtrees, where two fringe subtrees are considered as distinct if they are distinct as members of the particular family of trees. 
In \cite{FlajoletSS90}, Flajolet, Sipala and Steyaert proved that, under very general assumptions, the expected number of distinct fringe subtrees in a tree of size $n$ drawn \emph{uniformly at random} from some given family of trees is asymptotically equal to $c \cdot n/\sqrt{\log n}$, where the constant $c$ depends on the particular family of trees. In particular, their result covers uniformly random plane trees (where the constant $c$ evaluates to $c=\sqrt{(\log 4)/\pi}$) and uniformly random binary trees (with $c=2\sqrt{(\log 4)/\pi}$). The result of Flajolet et al.~was extended to  uniformly random $\Sigma$-labelled unranked trees in \cite{MLMN13} (where $\Sigma$-labelled means that each vertex of a tree is assigned a label from a finite alphabet $\Sigma$ and unranked means that the label of a vertex does not depend on its degree or vice versa) and reproved with a different proof technique in \cite{RalaivaosaonaW15} in the context of simply generated families of trees.

Another probabilistic tree model with respect to which the number of distinct fringe subtrees has been studied is the \emph{binary search tree model}: a random binary search tree of size $n$ is a binary search tree built by inserting the keys $\{1, \ldots, n\}$ according to a uniformly chosen random permutation on $\{1, \ldots, n\}$. Random binary search trees are of particular interest in computer science, as they naturally arise for example in the analysis of the Quicksort algorithm, see \cite{Drmota09}. In \cite{FlajoletGM97}, Flajolet, Gourdon and Martinez proved that the expected number of distinct fringe subtrees in a random binary search tree of size $n$ is $O(n/\log n)$. This result was improved in \cite{Devroye98} by Devroye, who showed that the asymptotics $\Theta(n/\log n)$ holds. 
In a recent paper by Bodini, Genitrini, Gittenberger, Larcher and Naima \cite{BodiniGGLN20}, the result of Flajolet, Gourdon and Martinez was reproved, and it was shown that the average number of distinct fringe subtrees in a random recursive tree of size $n$ is $O(n/\log n)$ as well. 
Moreover, the result of Devroye was generalized from random binary search trees to a broader class of random ordered binary trees in \cite{SeelbachLo18}, where the problem of estimating the expected number of distinct fringe subtrees in random binary trees was considered in the context of leaf-centric binary tree sources, which were introduced in \cite{KiefferYS09, ZhangYK14}  as a general framework for modelling probability distributions on the set of  binary trees of size $n$.

In this work, we consider two types of random trees: Random simply generated trees (as a general concept to model uniform probability distributions on various families of trees) and specific families of increasing trees (recursive trees, $d$-ary increasing trees and generalized plane oriented recursive trees), which in particular incorporate the binary search tree model (for the precise definitions see Sections~\ref{sec:simply-generated} and \ref{sec:increasing}).

Specifically, we investigate the number of ``distinct'' fringe subtrees with respect to these random tree models under a generalized interpretation of ``distinctness'', which allows for many different interpretations of what ``distinct'' trees are. To give a concrete example of different notions of distinctness, consider the family of $d$-ary trees where each vertex has $d$ possible positions to which children can be attached (for instance, if $d = 3$, a left, a middle and a right position). The following three possibilities lead to different interpretations of when two trees are regarded the same:
\begin{itemize}
\item the order and the positions of branches matter,
\item the order of branches matters, but not the positions to which they are attached,
\item neither the order nor the positions matter.
\end{itemize}

\begin{figure}[t]

\tikzset{level 1/.style={sibling distance=10mm}}
		\tikzset{level 2/.style={sibling distance=10mm}}
		\tikzset{level 3/.style={sibling distance=10mm}}  
		\tikzset{level 4/.style={sibling distance=10mm}}

		\centering
	\begin{tikzpicture}[scale=0.8,auto,swap,level distance=10mm,  ]
		\node[circle, inner sep = 2pt,fill=black] (eps) {} 
		child {node[circle, fill=black, inner sep = 2pt,minimum size = 1pt]{}
		child {node[circle, fill=black, inner sep = 2pt,minimum size = 1pt]{}}
	child {edge from parent [draw=none]node[circle, draw=none, inner sep = 2pt,minimum size = 1pt]{}}}
		child {node[circle, fill=black, inner sep = 2pt,minimum size = 1pt]{}}
;
\node[circle, inner sep = 2pt,fill=black, right=1.2cm  of eps] (eps2) {} 
		child {node[circle, fill=black, inner sep = 2pt,minimum size = 1pt]{}
		child {edge from parent [draw=none]node[circle, draw=none, inner sep = 2pt,minimum size = 1pt]{}}
	child {node[circle, fill=black, inner sep = 2pt,minimum size = 1pt]{}}}
		child {node[circle, fill=black, inner sep = 2pt,minimum size = 1pt]{}}
;
\node[circle, inner sep = 2pt,fill=black, right =1.2cm of eps2] (eps3) {} 
		child {node[circle, fill=black, inner sep = 2pt,minimum size = 1pt]{}
		}
		child {node[circle, fill=black, inner sep = 2pt,minimum size = 1pt]{}child {node[circle, fill=black, inner sep = 2pt,minimum size = 1pt]{}}
	child {edge from parent [draw=none]node[circle, draw=none, inner sep = 2pt,minimum size = 1pt]{}}}
;
\node[circle, inner sep = 2pt,fill=black, right=1.2cm of eps3] (eps4) {} 
		child {node[circle, fill=black, inner sep = 2pt,minimum size = 1pt]{}
		}
		child {node[circle, fill=black, inner sep = 2pt,minimum size = 1pt]{}child {edge from parent [draw=none]node[circle, draw=none, inner sep = 2pt,minimum size = 1pt]{}}
	child {node[circle, fill=black, inner sep = 2pt,minimum size = 1pt]{}}}
;

\node[circle, inner sep = 2pt,fill=black, right=5cm of eps4] (eps5) {} 
		child {node[circle, fill=black, inner sep = 2pt,minimum size = 1pt]{}
		child {node[circle, fill=black, inner sep = 2pt,minimum size = 1pt]{}}}
		child {node[circle, fill=black, inner sep = 2pt,minimum size = 1pt]{}}
;
\node[circle, inner sep = 2pt,fill=black, right=1.2cm of eps5] (eps6) {} 
		child {node[circle, fill=black, inner sep = 2pt,minimum size = 1pt]{}
		}
		child {node[circle, fill=black, inner sep = 2pt,minimum size = 1pt]{}
	child {node[circle, fill=black, inner sep = 2pt,minimum size = 1pt]{}}}
;
		\end{tikzpicture}
		\caption{Four distinct binary trees (left), and the two distinct plane trees associated to them (right), which are in turn identical as unordered trees}
		\label{fig:distinct-trees}
		\end{figure}
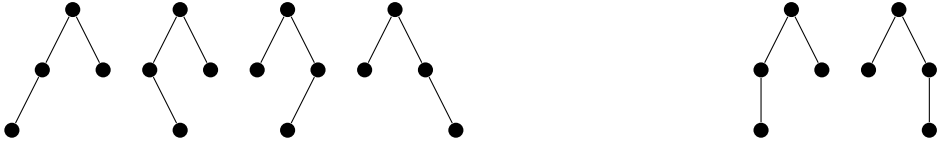

\begin{figure}[t]

\tikzset{level 1/.style={sibling distance=15mm}}
		\tikzset{level 2/.style={sibling distance=10mm}}
		\tikzset{level 3/.style={sibling distance=10mm}}  
		\tikzset{level 4/.style={sibling distance=10mm}}
		
		\flushleft
		\begin{minipage}{0.35\textwidth}
	\begin{tikzpicture}[scale=1,auto,swap,level distance=10mm,  ]
		\node[circle, inner sep = 2pt,fill=black] (eps) {} 
		child {node[circle, fill=black, inner sep = 2pt,minimum size = 1pt]{}
		child {node[circle, fill=black, inner sep = 2pt,minimum size = 1pt]{}		
		child {edge from parent[draw=none] node[circle, draw=none, inner sep = 2pt,minimum size = 1pt]{}}
		child {node[circle, fill=black, inner sep = 2pt,minimum size = 1pt]{}}}
	child {node[circle, fill=black, inner sep = 2pt,minimum size = 1pt]{}}}
		child {node[circle, fill=black, inner sep = 2pt,minimum size = 1pt]{}
		child {node[circle, fill=black, inner sep = 2pt,minimum size = 1pt]{}
		}
	child {node[circle, fill=black, inner sep = 2pt,minimum size = 1pt]{}
	child {node[circle, fill=black, inner sep = 2pt,minimum size = 1pt]{}}
	child {edge from parent[draw=none] node[circle, draw=none, inner sep = 2pt,minimum size = 1pt]{}}}	
		}
;
		\end{tikzpicture}
		\end{minipage}
		\begin{minipage}{0.5\textwidth}
		\begin{tikzpicture}[scale=0.65,auto,swap,level distance=10mm ]
		\node[circle, inner sep = 2pt,fill=black] (eps) {} 
		child {node[circle, fill=black, inner sep = 2pt,minimum size = 1pt]{}
		child {node[circle, fill=black, inner sep = 2pt,minimum size = 1pt]{}		
		child {edge from parent[draw=none] node[circle, draw=none, inner sep = 2pt,minimum size = 1pt]{}}
		child {node[circle, fill=black, inner sep = 2pt,minimum size = 1pt]{}}}
	child {node[circle, fill=black, inner sep = 2pt,minimum size = 1pt]{}}}
		child {node[circle, fill=black, inner sep = 2pt,minimum size = 1pt]{}
		child {node[circle, fill=black, inner sep = 2pt,minimum size = 1pt]{}
		}
	child {node[circle, fill=black, inner sep = 2pt,minimum size = 1pt]{}
	child {node[circle, fill=black, inner sep = 2pt,minimum size = 1pt]{}}
	child {edge from parent[draw=none] node[circle, draw=none, inner sep = 2pt,minimum size = 1pt]{}}}	
		}
;
\tikzset{level 1/.style={sibling distance=10mm}}
\node[circle, inner sep = 2pt,fill=black, right=2cm of eps] (eps2) {} 
		child {node[circle, fill=black, inner sep = 2pt,minimum size = 1pt]{}
		child {edge from parent[draw=none] node[circle, draw=none, inner sep = 2pt,minimum size = 1pt]{}	}
	child { node[circle, fill=black, inner sep = 2pt,minimum size = 1pt]{}}}
		child {node[circle, fill=black, inner sep = 2pt,minimum size = 1pt]{}
		}
;
		\node[circle, inner sep = 2pt,fill=black, right=1.2cm of eps2] (eps3) {} 
		child {node[circle, fill=black, inner sep = 2pt,minimum size = 1pt]{}
		}
		child {node[circle, fill=black, inner sep = 2pt,minimum size = 1pt]{}
		child { node[circle, fill=black, inner sep = 2pt,minimum size = 1pt]{}	}
	child { edge from parent[draw=none] node[circle,draw=none, inner sep = 2pt,minimum size = 1pt]{}}
		}
;
\node[circle, inner sep = 2pt,fill=black, right=1.2cm of eps3] (eps4) {} 
		child {node[circle, fill=black, inner sep = 2pt,minimum size = 1pt]{}
		}
	child { edge from parent[draw=none] node[circle, draw=none, inner sep = 2pt,minimum size = 1pt]{}}
;
\node[circle, inner sep = 2pt,fill=black, right=0.5cm of eps4] (eps5) {} 
		child {edge from parent[draw=none] node[circle, draw=none, inner sep = 2pt,minimum size = 1pt]{}
		}
	child { node[circle, fill=black, inner sep = 2pt,minimum size = 1pt]{}}
;
\node[circle, inner sep = 2pt,fill=black, right=1.2cm of eps5] (eps) {} 
;
\node[below=1.75cm of eps3](eps55){\qquad (i) Distinct binary fringe subtrees};
\node[draw=none, below=0.25cm of eps55](eps125){};
		\end{tikzpicture}
		\begin{tikzpicture}[scale=0.65,auto,swap,level distance=10mm ]
		\node[circle, inner sep = 2pt,fill=black] (eps) {} 
		child {node[circle, fill=black, inner sep = 2pt,minimum size = 1pt]{}
		child {node[circle, fill=black, inner sep = 2pt,minimum size = 1pt]{}		
		child {node[circle, fill=black, inner sep = 2pt,minimum size = 1pt]{}}}
	child {node[circle, fill=black, inner sep = 2pt,minimum size = 1pt]{}}}
		child {node[circle, fill=black, inner sep = 2pt,minimum size = 1pt]{}
		child {node[circle, fill=black, inner sep = 2pt,minimum size = 1pt]{}
		}
	child {node[circle, fill=black, inner sep = 2pt,minimum size = 1pt]{}
	child {node[circle, fill=black, inner sep = 2pt,minimum size = 1pt]{}}
	}	
		}
;
\tikzset{level 1/.style={sibling distance=10mm}}
\node[circle, inner sep = 2pt,fill=black, right=2cm of eps] (eps2) {} 
		child {node[circle, fill=black, inner sep = 2pt,minimum size = 1pt]{}
	child { node[circle, fill=black, inner sep = 2pt,minimum size = 1pt]{}}}
		child {node[circle, fill=black, inner sep = 2pt,minimum size = 1pt]{}
		}
;
		\node[circle, inner sep = 2pt,fill=black, right=1.2cm of eps2] (eps3) {} 
		child {node[circle, fill=black, inner sep = 2pt,minimum size = 1pt]{}
		}
		child {node[circle, fill=black, inner sep = 2pt,minimum size = 1pt]{}
		child { node[circle, fill=black, inner sep = 2pt,minimum size = 1pt]{}	}
		}
;
\node[circle, inner sep = 2pt,fill=black, right=1.2cm of eps3] (eps4) {} 
		child {node[circle, fill=black, inner sep = 2pt,minimum size = 1pt]{}
		}
;
\node[circle, inner sep = 2pt,fill=black, right=1.2cm of eps4] (eps) {} 
;
\node[below=1.75cm of eps3](eps55){\qquad (ii) Distinct plane fringe subtrees};
\node[draw=none, below=0.25cm of eps55](eps125){};
		\end{tikzpicture}
		\begin{tikzpicture}[scale=0.65,auto,swap,level distance=10mm ]
		\node[circle, inner sep = 2pt,fill=black] (eps2) {} 
		child {node[circle, fill=black, inner sep = 2pt,minimum size = 1pt]{}
		child {node[circle, fill=black, inner sep = 2pt,minimum size = 1pt]{}		
		child {node[circle, fill=black, inner sep = 2pt,minimum size = 1pt]{}}}
	child {node[circle, fill=black, inner sep = 2pt,minimum size = 1pt]{}}}
		child {node[circle, fill=black, inner sep = 2pt,minimum size = 1pt]{}
		child {node[circle, fill=black, inner sep = 2pt,minimum size = 1pt]{}
		}
	child {node[circle, fill=black, inner sep = 2pt,minimum size = 1pt]{}
	child {node[circle, fill=black, inner sep = 2pt,minimum size = 1pt]{}}
	}	
		}
;
\tikzset{level 1/.style={sibling distance=10mm}}
		\node[circle, inner sep = 2pt,fill=black, right=2cm of eps2] (eps3) {} 
		child {node[circle, fill=black, inner sep = 2pt,minimum size = 1pt]{}
		}
		child {node[circle, fill=black, inner sep = 2pt,minimum size = 1pt]{}
		child { node[circle, fill=black, inner sep = 2pt,minimum size = 1pt]{}	}
		}
;
\node[circle, inner sep = 2pt,fill=black, right=1.2cm of eps3] (eps4) {} 
		child {node[circle, fill=black, inner sep = 2pt,minimum size = 1pt]{}
		}
;
\node[circle, inner sep = 2pt,fill=black, right=1.2cm of eps4] (eps) {} 
;
\node[below right =1.75cm and -2.5cm of eps4](eps55){(iii) Distinct unordered fringe subtrees};
		\end{tikzpicture}
		\end{minipage}
		\caption{A binary tree (left) and (i) the six distinct binary trees, (ii) the five distinct plane trees and (iii) the four distinct unordered trees represented by its fringe subtrees }
		\label{fig:distinct-trees-2}
		\end{figure}
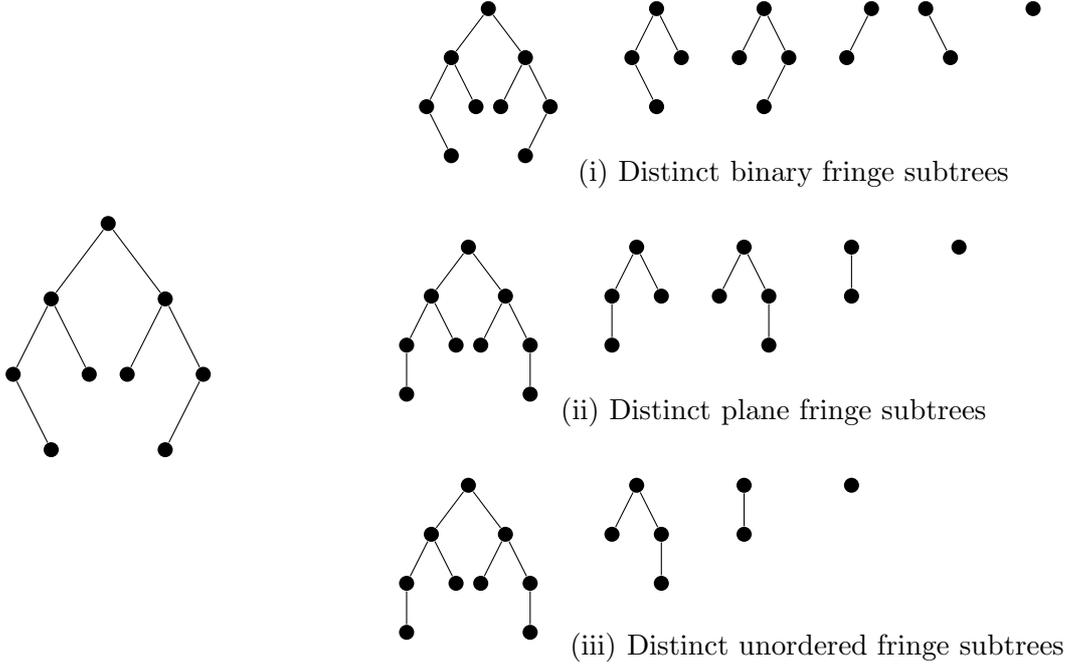

See Figure~\ref{fig:distinct-trees} and  Figure~\ref{fig:distinct-trees-2} for an illustration.
In order to cover all these cases, we only assume that the trees of order $k$ within the given family $\mathcal{F}$ of trees are partitioned into a set $\mathcal{I}_k$ of isomorphism classes for every $k$. The quantity of interest is the total number of isomorphism classes that occur among the fringe subtrees of a random tree with $n$ vertices. 
The following rather mild assumptions turn out to be sufficient for our purposes:

\begin{itemize}
\item[(C1)] We have $\limsup_{k \to \infty} \frac{\log |\mathcal{I}_k|}{k} = C_1 < \infty$.
\item[(C2)] There exist subsets $\mathcal{J}_k \subseteq \mathcal{I}_k$ of isomorphism classes and a positive constant $C_2$ such that
\begin{itemize}
\item[(C2a)] a random tree in the family $\mathcal{F}$ with $k$ vertices belongs to a class in $\mathcal{J}_k$ with probability $1 - o(1)$  as $k \to \infty$, and
\item[(C2b)] the probability that a random tree in $\mathcal{F}$ with $k$ vertices lies in a fixed isomorphism class $I \in \mathcal{J}_k$ is never greater than $e^{- C_2k + o(k)}$.

\end{itemize}
\end{itemize}
Note that (C2a) and (C2b) imply that $|\mathcal{I}_k| \geq |\mathcal{J}_k| \geq e^{C_2k - o(k)}$, thus we have $C_1 \geq C_2 > 0$.
Under the conditions (C1) and (C2), we prove the following general statement (for the definitions of offspring distributions and Galton--Watson processes, see Section~\ref{sec:simply-generated}):

\begin{theo}\label{thm:master-theorem-simply-generated}
Let $\mathcal{F}$ be a simply generated family of trees with a partition into isomorphism classes that satisfies (C1) and (C2), and let $\xi$ be the offspring distribution of the Galton--Watson process corresponding to $\mathcal{F}$, which satisfies $\Ex(\xi)=1$ and $\mathbb{V}(\xi)=\sigma^2<\infty$.
Let $A_n$ denote the total number of different isomorphism classes represented by the fringe subtrees of a random tree  $T_n$ of size $n$ drawn randomly from the specific family $\mathcal{F}$. Set $\kappa =\sqrt{2/(\pi\sigma^2)}$. We have
\begin{itemize}
\item[(i)] $\displaystyle \frac{\kappa \sqrt{C_2} n}{\sqrt{\log n}} (1+o(1)) \leq \Ex(A_n) \leq \frac{\kappa \sqrt{C_1} n}{\sqrt{\log n}} (1+o(1))$,
\item[(ii)] $\displaystyle \frac{\kappa \sqrt{C_2} n}{\sqrt{\log n}} (1+o(1)) \leq A_n \leq \frac{\kappa \sqrt{C_1} n}{\sqrt{\log n}} (1+o(1))$ with high probability (i.e., with probability tending to $1$ as $n \to \infty$).
\end{itemize}
\end{theo}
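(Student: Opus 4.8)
The plan is to decompose $A_n = \sum_{k\geq 1} D_{n,k}$, where $D_{n,k}$ denotes the number of distinct isomorphism classes of size $k$ occurring among the fringe subtrees of $T_n$, and to analyze each $D_{n,k}$ through the number $N_{n,k}$ of fringe subtrees of size exactly $k$. The one analytic input I would establish first is the uniform estimate $\Ex(N_{n,k}) = n\,\Prob(|T|=k)(1+o(1))$ for $k$ in the relevant window $k \asymp \log n$, where $T$ is the unconditioned Galton--Watson tree; together with the local limit theorem $\Prob(|T|=k)\sim \frac{1}{\sqrt{2\pi\sigma^2}}k^{-3/2}$ (restricted, if necessary, to the lattice supporting $\xi$, which does not affect the sums below) this yields $\Ex(N_{n,k})\sim \frac{n}{\sqrt{2\pi\sigma^2}}k^{-3/2}$. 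The decisive elementary fact is the tail bound $\sum_{k\geq K}k^{-3/2}\sim 2K^{-1/2}$: with $K\asymp \log n/C$ it produces exactly the order $n/\sqrt{\log n}$, and since $2\cdot\frac{1}{\sqrt{2\pi\sigma^2}} = \sqrt{2/(\pi\sigma^2)} = \kappa$, the stated constants emerge automatically.

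For the upper bound on $\Ex(A_n)$, note that isomorphism classes are deterministic, so $D_{n,k}\leq\min(N_{n,k},|\mathcal{I}_k|)$ pathwise, whence $\Ex(D_{n,k})\leq \min(\Ex(N_{n,k}),|\mathcal{I}_k|)$. Using (C1) in the form $|\mathcal{I}_k|\leq e^{(C_1+o(1))k}$, the two bounds cross over at $k^\ast\sim \log n/C_1$. Below $k^\ast$ the contribution $\sum_{k<k^\ast}|\mathcal{I}_k|$ is a geometric sum of order $O(n/(\log n)^{3/2})$ and is negligible, while summing $\Ex(N_{n,k})$ over $k\geq k^\ast$ gives $\frac{\kappa\sqrt{C_1}\,n}{\sqrt{\log n}}(1+o(1))$; together these establish the upper bound in~(i).

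For the lower bound I would use the deletion inequality $D_{n,k}\geq \sum_{I\in\mathcal{J}_k}\bigl(X_I-\binom{X_I}{2}\bigr)$, where $X_I$ is the number of fringe subtrees in class $I$. The first sum has expectation $(1-o(1))\Ex(N_{n,k})$ by (C2a), since a typical size-$k$ fringe subtree lies in $\mathcal{J}_k$. The key simplification for the second sum is that a proper fringe subtree is strictly smaller than the tree containing it, so two fringe subtrees of the \emph{same} size $k$ can never be in ancestor--descendant position; only pairs of vertices in general position contribute, and for those the two fringe subtrees are asymptotically independent, giving $\Ex\bigl(\sum_{I\in\mathcal{J}_k}\binom{X_I}{2}\bigr)\lesssim n^2\sum_{I\in\mathcal{J}_k}\Prob(T\in I)^2$. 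By (C2b) this is at most $n^2 e^{-C_2k+o(k)}\Prob(|T|=k)$, which for $k\geq(1+\varepsilon)\log n/C_2$ is $o(\Ex(N_{n,k}))$; hence $\Ex(D_{n,k})\sim\Ex(N_{n,k})$ there, and summing over such $k$ and letting $\varepsilon\to0$ yields the lower bound $\frac{\kappa\sqrt{C_2}\,n}{\sqrt{\log n}}(1+o(1))$ in~(i).

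For the high-probability statement~(ii), the upper bound follows from $A_n\leq \sum_{k<k^\ast}|\mathcal{I}_k| + M_{\geq k^\ast}$, where $M_{\geq k^\ast}$ is the number of fringe subtrees of size at least $k^\ast$: the first term is deterministic and negligible, so it suffices to show $M_{\geq k^\ast}$ concentrates around its mean via a variance estimate for this additive functional of the conditioned Galton--Watson tree. For the lower bound, $\sum_{k>(1+\varepsilon)\log n/C_2}D_{n,k}$ is bounded below by the (concentrated) count of typical-class fringe subtrees of size exceeding $(1+\varepsilon)\log n/C_2$ minus the total collision term, whose expectation is $o(n/\sqrt{\log n})$ and which is therefore $o(n/\sqrt{\log n})$ with high probability by Markov's inequality. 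I expect the main obstacles to be the uniform asymptotics of $\Ex(N_{n,k})$ across the window $k\asymp\log n$ (with explicit error control and periodicity) and making ``asymptotic independence of two fringe subtrees in general position'' precise enough for the second-moment bound; the variance estimates underlying the concentration in~(ii) are likely the most delicate technical ingredient.
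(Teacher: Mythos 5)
Your proposal follows essentially the same route as the paper: a cut-point argument with the upper bound given by all isomorphism classes below $k^\ast\sim\log n/C_1$ plus all fringe subtrees above it, and the lower bound given by the deletion inequality $\sum_I\bigl(X_I-\binom{X_I}{2}\bigr)$ with (C2b) controlling the collision term; the tail asymptotics $\sum_{k\geq K}k^{-3/2}\sim 2K^{-1/2}$ produces the constants in both. One remark on the step you flag as delicate: you do not need ``asymptotic independence of two fringe subtrees in general position.'' For simply generated (conditioned Galton--Watson) trees, conditionally on the number $N$ of fringe subtrees of size exactly $k$, those $N$ subtrees are \emph{exactly} independent random trees from $\mathcal{F}_k$; this gives $\Ex\bigl(\sum_I\binom{X_I}{2}\mid N\bigr)=\binom{N}{2}\sum_I q_I^2\leq\frac{n^2}{2}e^{-C_2k+o(k)}$ directly, which is the bound the paper uses and which suffices after summing over $k\geq k_1=\frac{\log n}{C_2}(1+o(1))$. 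Two smaller points of imprecision: the geometric sum $\sum_{k<k^\ast}|\mathcal{I}_k|$ is only $e^{C_1k^\ast+o(k^\ast)}=n^{1+o(1)}$ under (C1) alone, so you must choose $k^\ast$ slightly below $\log n/C_1$ to make it $o(n/\sqrt{\log n})$ (this does not affect the leading constant, but your claimed bound $O(n/(\log n)^{3/2})$ is not justified as stated); and for the high-probability upper bound the concentration of the count of large fringe subtrees is obtained in the paper by Chebyshev plus a union bound over $k^\ast\leq k\leq n^{\varepsilon}$ (using $\mathbb{V}(Z_{n,k})=O(n/k^{3/2})$) together with a separate Markov bound for $k>n^{\varepsilon}$, which is the concrete form of the variance argument you sketch.
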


The same also applies to families of increasing trees, of which binary search trees and recursive trees are special cases: we obtain essentially the same statement, with the order of magnitude being $\frac{n}{\log n}$ rather than $\frac{n}{\sqrt{\log n}}$.

\begin{theo}\label{thm:master-theorem-increasing}
Let $\mathcal{F}$ be one of the ``very simple families'' of increasing trees (recursive trees, $d$-ary increasing trees, or gports, see Section~\ref{sec:increasing}) with a partition into isomorphism classes that satisfies (C1) and (C2). Let $A_n$ denote the total number of different isomorphism classes represented by the fringe subtrees of a random tree $T_n$ of size $n$ drawn from $\mathcal{F}$. Set $\kappa = \frac{1}{1+\alpha}$, where $\alpha=0$ in the case of recursive trees, $\alpha=1/r$ for some constant $r>0$ in the case of gports, and $\alpha=-1/d$ for $d$-ary increasing trees. We have
\begin{itemize}
\item[(i)] $\displaystyle \frac{\kappa C_2 n}{\log n} (1+o(1)) \leq \Ex(A_n) \leq \frac{\kappa C_1 n}{\log n} (1+o(1))$,
\item[(ii)] $\displaystyle \frac{\kappa C_2 n}{\log n} (1+o(1)) \leq A_n \leq \frac{\kappa C_1 n}{\log n} (1+o(1))$ with high probability.
\end{itemize}
\end{theo}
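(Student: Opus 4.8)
The plan is to prove both directions by sandwiching $A_n$ between two additive statistics of the fringe subtrees that we can control, following the same skeleton as for Theorem~\ref{thm:master-theorem-simply-generated} but using two facts specific to increasing trees: (a) a \emph{self-similarity} property — conditioned on the event that the fringe subtree $T_v$ at a vertex $v$ has size $j$, its isomorphism type is distributed exactly as that of a random tree of size $j$ drawn from $\mathcal{F}$, and for two vertices that are not in an ancestor--descendant relation these conditional types are (up to a $1+o(1)$ factor) independent; and (b) the \emph{subtree-size profile}: writing $N_{\ge k}$ for the number of vertices $v$ with $\card{T_v}\ge k$, one has $\Ex(N_{\ge k})\sim \kappa n/k$ uniformly for $k$ up to order $\log n$, where $\kappa=1/(1+\alpha)$ is exactly the constant in the statement (for recursive trees this is the familiar $\sum_{j\ge k}\frac{1}{j(j+1)}=\frac1k$, and the analogous computation in the three families of Section~\ref{sec:increasing} produces the stated $\alpha$). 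Both facts follow from the recursive description of the families. The factor $1/\sqrt{\log n}$ of the simply generated case is replaced by $1/\log n$ precisely because $\Ex(N_{\ge k})$ now decays like $1/k$ rather than like $1/\sqrt{k}$.

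For the upper bound on $\Ex(A_n)$, split the fringe subtrees at a size $k=k(n)$. Those of size $<k$ contribute at most $\sum_{j<k}\card{\mathcal{I}_j}$ distinct types, which by (C1) is at most $e^{(C_1+\epsilon)k}$ for every fixed $\epsilon>0$ and $k$ large; those of size $\ge k$ contribute at most $N_{\ge k}$ distinct types, since each type occurs at least once. Hence $\Ex(A_n)\le e^{(C_1+\epsilon)k}+\Ex(N_{\ge k})$. Choosing $k=\frac{\log n-2\log\log n}{C_1+\epsilon}$ makes the first term $o(n/\log n)$ while the second is $\sim\kappa n/k\sim\frac{\kappa(C_1+\epsilon)n}{\log n}$; letting $\epsilon\to0$ through a diagonal argument yields $\Ex(A_n)\le\frac{\kappa C_1 n}{\log n}(1+o(1))$.

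For the lower bound on $\Ex(A_n)$ I would only count distinct types lying in the ``good'' classes $\mathcal{J}$. Let $B_n$ be the number of distinct $I\in\bigcup_{j\ge k'}\mathcal{J}_j$ occurring as fringe subtrees, so $A_n\ge B_n$, and let $N'_{\ge k'}$ count the vertices $v$ with $\card{T_v}\ge k'$ and $T_v$ of $\mathcal{J}$-type. If a type occurs $t\ge1$ times it adds $1$ to $B_n$ and $t$ to $N'_{\ge k'}$, so $B_n\ge N'_{\ge k'}-P$, where $P$ is the number of unordered pairs of distinct vertices with isomorphic $\mathcal{J}$-type fringe subtrees of size $\ge k'$. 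By (a) and (C2a), $\Ex(N'_{\ge k'})\sim\Ex(N_{\ge k'})\sim\kappa n/k'$. For $P$, isomorphic equal-size fringe subtrees cannot be in ancestor relation, so I condition on two incomparable vertices with $\card{T_u}=\card{T_v}=j$, whence by (a) their types are independent and the collision probability is $\sum_{I\in\mathcal{J}_j}\Prob(\text{size-}j\text{ tree}\cong I)^2\le e^{-(C_2-\epsilon)j}$ by (C2b). Combining this with a second-moment estimate for the number $F_{n,j}$ of size-$j$ fringe subtrees, which gives $\sum_{u\ne v}\Prob(\card{T_u}=\card{T_v}=j)=\Ex(F_{n,j}(F_{n,j}-1))=O((\kappa n/j^2)^2)$, the dominant term $j=k'$ yields $\Ex(P)=O\!\big(e^{-(C_2-\epsilon)k'}(n/(k')^2)^2\big)$. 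Taking $k'=\frac{\log n}{C_2-2\epsilon}$ makes $e^{-(C_2-\epsilon)k'}=n^{-1-\epsilon'}$ for some $\epsilon'>0$, so $\Ex(P)=o(n/\log n)=o(\Ex(N'_{\ge k'}))$, and $\Ex(A_n)\ge\frac{\kappa(C_2-2\epsilon)n}{\log n}(1+o(1))$; letting $\epsilon\to0$ gives the claimed lower bound. \emph{The main obstacle is this pair-correlation step}: it needs the conditional independence and equidistribution of incomparable fringe subtrees given their sizes, together with a second moment for $F_{n,j}$ valid uniformly for $j$ of order $\log n$. Restricting to $\mathcal{J}$-types is essential, since the crude bound $\sum_{I\in\mathcal{I}_j}\Prob(\text{size-}j\text{ tree}\cong I)^2$ over \emph{all} types yields only an $o(1)$, rather than an exponentially small, collision probability.

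Finally, for the high-probability statements (ii) I would upgrade these expectation bounds via concentration of the \emph{sandwiching} functionals, noting that $A_n$ itself is not additive but is squeezed between quantities that are. Both $N_{\ge k}$ and $N'_{\ge k'}$ are sums over vertices of a bounded functional of the local fringe subtree and obey a law of large numbers for increasing trees, so Chebyshev's inequality gives $N_{\ge k}\le(1+o(1))\Ex(N_{\ge k})$ and $N'_{\ge k'}\ge(1-o(1))\Ex(N'_{\ge k'})$ with high probability; since $\Ex(P)=o(n/\log n)$, Markov's inequality gives $P=o(n/\log n)$ with high probability as well. Combining the deterministic bound $A_n\le\card{\mathcal{I}_{<k}}+N_{\ge k}$ with $A_n\ge N'_{\ge k'}-P$ then pins $A_n$ between $\frac{\kappa C_2 n}{\log n}(1+o(1))$ and $\frac{\kappa C_1 n}{\log n}(1+o(1))$ with high probability, as required.
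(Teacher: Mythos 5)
Your proposal is correct and follows essentially the same cut-point argument as the paper: the paper bounds $A_n$ above by $\sum_{k<k_0}\card{\mathcal{I}_k}$ plus the number of fringe subtrees of size at least $k_0$, and below by the count of fringe subtrees in the good classes $\mathcal{J}_k$ minus the number of isomorphic pairs (controlled via (C2b) and the conditional independence of same-size fringe subtrees), with concentration obtained from the variance bound $\mathbb{V}(Z_{n,k})=O(n/k^2)$ via Chebyshev, Markov and a union bound. The only cosmetic differences are that you aggregate into $N_{\geq k}$ and use the sharper second moment $O((n/j^2)^2)$ for the pair count where the paper's crude $n^2/2$ already suffices; just note that your appeal to a ``law of large numbers'' for $N_{\geq k}$ must really be a direct Chebyshev estimate with the uniform variance bound, since the toll function's cutoff $k$ grows with $n$ and the limit constant would otherwise be $0$.
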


As our main application of these theorems, we investigate the number of distinct unordered trees represented by the fringe subtrees of a random tree. This question arises quite naturally for example in the context of XML compression: Here, one distinguishes between document-centric XML, for which the corresponding XML document trees are ordered, and data-centric XML, for which the corresponding XML document trees are unordered. Understanding the interplay between ordered and unordered structures has thus received considerable attention in the context of XML (see for example~\cite{AbiteboulBV15, BonevaCS15, ZhangDW15}). In particular, in \cite{LohreyMR17}, it was investigated whether tree compression can benefit from unorderedness. For this reason, \emph{unordered minimal DAGs} were considered. An unordered minimal DAG of a tree is a directed acyclic graph obtained by merging vertices that are roots of fringe subtrees which are identical as unordered trees. From such an unordered minimal DAG, an unordered representation of the original tree can be uniquely retrieved. The size of this compressed representation is the number of distinct unordered trees represented by the fringe subtrees occurring in the tree. So far, only some worst-case estimates comparing the size of a minimal DAG to the size of its corresponding unordered minimal DAG are known: among other things, it was shown in \cite{LohreyMR17} that the size of an unordered minimal DAG of a binary tree can be exponentially smaller than the size of the corresponding (ordered) minimal DAG. 

However, no average-case estimates comparing the size of the minimal DAG of a tree to the size of the corresponding unordered minimal DAG are known so far. In particular, in \cite{LohreyMR17} it is stated as an open problem to estimate the expected number of distinct unordered trees represented by the fringe subtrees of a uniformly random binary tree of size $n$ and conjectured that this number asymptotically grows as $\Theta(n/\sqrt{\log n})$.
 
In this work, as one of our main theorems, we settle this open conjecture by proving upper and lower bounds of order $n/\sqrt{\log n}$ for the number of distinct unordered trees represented by the fringe subtrees of a tree of size $n$ drawn randomly\ from a simply generated family of trees, which hold both in expectation and with high probability. For uniformly random binary trees, our result reads as follows:

\begin{theo}\label{thm:unordered-binary}
Let $K_n$ denote the number of distinct unordered trees represented by the fringe subtrees of a uniformly random binary tree of size $n$. Then for $c_1 \approx 1.0591261434$ and $c_2 \approx 1.0761505454$, we have 
\begin{itemize}
\item[(i)]$\displaystyle c_1 \frac{n}{\sqrt{\log n}}(1+o(1))\leq \Ex(K_n) \leq c_2 \frac{n}{\sqrt{\log n}}(1+o(1))$,
\item[(ii)]$\displaystyle c_1 \frac{n}{\sqrt{\log n}}(1+o(1))\leq K_n \leq c_2\frac{n}{\sqrt{\log n}}(1+o(1))$ with high probability.
\end{itemize}
\end{theo}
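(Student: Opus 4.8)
The plan is to derive Theorem~\ref{thm:unordered-binary} as a special case of Theorem~\ref{thm:master-theorem-simply-generated}, applied to the family $\mathcal{F}$ of binary trees equipped with the partition into \emph{unordered} isomorphism classes, where two binary trees lie in the same class $I \in \mathcal{I}_k$ precisely when they coincide after recursively forgetting all left/right distinctions. Uniformly random binary trees form a simply generated family with weight generating function $\phi(t) = (1+t)^2$ (a vertex with a single child carries weight $2$, accounting for the left/right choice), so that the total weight of the trees of size $k$ is the Catalan number $C_k$. The associated critical Galton--Watson process has $\tau = 1$ and offspring distribution $\Prob(\xi = 0) = \tfrac14$, $\Prob(\xi = 1) = \tfrac12$, $\Prob(\xi = 2) = \tfrac14$, whence $\Ex(\xi) = 1$ and $\sigma^2 = \mathbb{V}(\xi) = \tfrac12$, giving $\kappa = \sqrt{2/(\pi\sigma^2)} = 2/\sqrt{\pi}$. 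It then remains only to verify the conditions (C1) and (C2) for this partition and to evaluate the constants $C_1$ and $C_2$, after which the theorem follows with $c_1 = \kappa\sqrt{C_2}$ and $c_2 = \kappa\sqrt{C_1}$.

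For (C1), I would observe that $\mathcal{I}_k$ is exactly the set of unordered rooted trees on $k$ vertices in which every vertex has at most two children. Their generating function $B(x) = \sum_k \abs{\mathcal{I}_k}\, x^k$ satisfies the Otter-type functional equation
\[
B(x) = x\Bigl(1 + B(x) + \tfrac12\bigl(B(x)^2 + B(x^2)\bigr)\Bigr),
\]
and a standard singularity analysis (locating the square-root singularity $\rho$ from $B = \Psi(x,B)$ together with $\partial_B\Psi(x,B) = 1$) yields $\abs{\mathcal{I}_k} = \Theta(\rho^{-k} k^{-3/2})$, so that $C_1 = \log(1/\rho)$. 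Numerically this gives $C_1 \approx 0.9097$, and hence $c_2 = \kappa\sqrt{C_1} \approx 1.0762$.

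The bulk of the work lies in (C2), which I expect to be the main obstacle. The key identity is that the number of binary trees reducing to a fixed class $I$ equals $2^{a(I)}$, where $a(I)$ counts the \emph{asymmetric} internal vertices of $I$, namely those with exactly one child together with those having two children whose two subtrees are non-isomorphic as unordered trees; consequently the probability that the random binary tree $T_k$ reduces to $I$ is $2^{a(I)}/C_k$. Writing $a(I) = k - L(I) - s(I)$, with $L(I)$ the number of leaves and $s(I)$ the number of \emph{symmetric} degree-two vertices (those whose two subtrees are isomorphic unordered trees), I would show by the theory of additive and fringe functionals of Galton--Watson trees that $L(T_k)/k \to \tfrac14$ and $s(T_k)/k \to \nu$ in probability, where $\nu = \tfrac14\,Q$ and $Q = \Prob(\mathcal{T} \cong \mathcal{T}')$ is the collision probability that two independent copies of the limiting Galton--Watson tree are isomorphic as unordered trees. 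Evaluating $Q$ exploits the recursive structure of the trees and is carried out numerically; this pins down $\mu := \lim a(T_k)/k = \tfrac34 - \nu$ and hence
\[
C_2 = \log 4 - \mu\log 2 = \bigl(\tfrac54 + \nu\bigr)\log 2 \approx 0.8810,
\]
so that $c_1 = \kappa\sqrt{C_2} \approx 1.0591$.

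With these values established, the sets $\mathcal{J}_k = \{I \in \mathcal{I}_k : a(I) \le (\mu+\varepsilon)k\}$ satisfy (C2a) by the concentration of $a(T_k)$ and (C2b) with constant $C_2(\varepsilon) = \log 4 - (\mu+\varepsilon)\log 2$, since $2^{a(I)}/C_k \le e^{-C_2(\varepsilon)k + o(k)}$ uniformly on $\mathcal{J}_k$; letting $\varepsilon \downarrow 0$ after applying Theorem~\ref{thm:master-theorem-simply-generated} for each fixed $\varepsilon$ recovers the stated constant $c_1$. Both parts (i) and (ii) of the theorem then follow directly from the corresponding parts of the master theorem. The delicate points are thus the concentration estimate for the number of symmetric vertices (ensuring (C2a) holds with high probability) and the precise numerical evaluation of the collision probability $Q$, on which the exact value of $c_1$ ultimately depends.
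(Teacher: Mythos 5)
Your proposal is correct and arrives at the paper's constants, but it packages the key computation differently, so a comparison is worthwhile. The paper obtains Theorem~\ref{thm:unordered-binary} from the general Theorem~\ref{thm:simply-generated-unordered}: condition (C1) comes from the Wedderburn--Etherington growth constant $b_M \approx 2.4832535363$ (your functional equation for $B(x)$ counts exactly the same objects, so your $C_1 \approx 0.9097$ is $\log b_M$), while condition (C2) comes from Lemma~\ref{lem:unordered-janson}, which applies Janson's Theorem~\ref{thm:janson} to the additive functional $\log\bigl(\nu(t)\prod_{v}\deg(v)!/|\Aut(t)|\bigr)$; the numerical value of the resulting constant is then imported from B\'ona and Flajolet's central limit theorem for $\log|\Aut(T_k)|$ of random full binary trees ($\gamma \approx 0.2710416936$), and the result is transferred to binary trees with unary vertices via the leaf-removal bijection $\vartheta$. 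You instead work directly with binary trees and count the members of an unordered class $I$ as $2^{a(I)}$, $a(I)$ being the number of asymmetric vertices; this is the same quantity, since $2^{a(I)}$ equals $2$ to the number of unary vertices times $\prod_v \deg(v)!/|\Aut(I)|$, and your decomposition $a = k - L - s$ together with $s(T_k)/k \to \tfrac14 Q$ is precisely the functional of Lemma~\ref{lem:unordered-janson} specialized to this family (the toll function is bounded, so Theorem~\ref{thm:janson} applies immediately). The two evaluations of the constant agree via $1+\gamma = \tfrac54 + \tfrac{Q}{4}$, i.e.\ $Q = 4\gamma - 1 \approx 0.0842$, which the recursion $Q = \tfrac1{16} + \tfrac{Q}{4} + \tfrac{Q^2}{8} - \tfrac{R}{16}$ (with $R = \sum_u p_u^4$) indeed reproduces. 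What your route buys is self-containedness: no appeal to the published B\'ona--Flajolet constant and no detour through full binary trees; the cost is that you must carry out and justify the numerical evaluation of the collision probability $Q$ yourself. One minor presentational remark: instead of applying the master theorem for each fixed $\varepsilon$ and letting $\varepsilon \downarrow 0$ afterwards (which requires a small diagonal argument to recover the with-high-probability statement with the limiting constant), the paper builds a sequence $\varepsilon_k \to 0$ directly into the definition of $\mathcal{J}_k$; this is cleaner but not essentially different, and your version can be made rigorous in the same way.
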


Our approach can also be used to obtain analogous results for  random recursive trees, $d$-ary increasing trees and generalized plane oriented recursive trees, though the order of magnitude changes to $\Theta(n/\log n)$. Again, we have upper and lower bounds in expectation and with high probability. For binary increasing trees, which are equivalent to binary search trees, our result reads as follows:

\begin{theo}\label{thm:unorderedbst}
Let $K_n$ be the total number of distinct unordered trees represented by the fringe subtrees of a random binary search tree of size $n$. For two constants $c_3 \approx 1.5470025923$ and $c_4 \approx 1.8191392203$, the following holds:
\begin{enumerate}
\item[(i)] $\displaystyle c_3 \frac{n}{\log n} (1+o(1)) \leq \Ex(K_n) \leq c_4 \frac{n}{\log n} (1+o(1))$,
\item[(ii)] $\displaystyle c_3 \frac{n}{\log n} (1+o(1)) \leq K_n \leq c_4 \frac{n}{\log n} (1+o(1))$ with high probability.
\end{enumerate}
\end{theo}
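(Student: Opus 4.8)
The plan is to derive Theorem~\ref{thm:unorderedbst} as a direct application of Theorem~\ref{thm:master-theorem-increasing}. A random binary search tree is exactly a random binary increasing tree, i.e.\ the $d$-ary increasing tree model with $d=2$; hence $\alpha=-1/d=-1/2$ and $\kappa=\frac{1}{1+\alpha}=2$. Once conditions (C1) and (C2) are verified for the \emph{unordered} partition and the constants $C_1,C_2$ are identified, Theorem~\ref{thm:master-theorem-increasing} yields the claim with $c_3=2C_2$ and $c_4=2C_1$. So the whole task reduces to (a) describing the relevant isomorphism classes, (b) computing $C_1$, and (c) computing $C_2$.

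For (a)--(b): the unordered shape of a fringe subtree of a binary tree is a rooted tree in which every vertex has at most two children, and conversely every such tree arises; thus $\mathcal I_k$ is the set of unordered rooted trees of size $k$ with out-degree at most $2$. Writing $Y(x)=\sum_k\abs{\mathcal I_k}x^k$, a multiset decomposition at the root gives $Y(x)=x\bigl(1+Y(x)+\tfrac12(Y(x)^2+Y(x^2))\bigr)$. Standard singularity analysis of this equation (this is the Wedderburn--Etherington generating function) produces a square-root singularity at some $\rho$, so $\abs{\mathcal I_k}=\Theta(\rho^{-k}k^{-3/2})$ and (C1) holds with $C_1=\log(1/\rho)$; numerically $1/\rho\approx 2.4832535$, whence $c_4=2C_1=2\log(1/\rho)\approx 1.8191$.

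For (c) I would compute $C_2$ as the almost-sure exponential rate of the probability of the unordered class. The key observation is that the multiset of subtree sizes $\{s_v\}$ is determined by the unordered shape, so every ordered representative of a class has the same binary-search-tree probability $\prod_v s_v^{-1}$; consequently $\Prob_{\mathrm{unord}}(\text{class of }t)=N(t)\prod_v s_v^{-1}$, where $N(t)=2^{u(t)+d(t)}$ counts the ordered representatives ($u$ is the number of vertices with exactly one child, $d$ the number of vertices with two non-isomorphic child subtrees). Hence the surprisal is
\[
-\log\Prob_{\mathrm{unord}}(\text{class of }T_k)=\sum_v\log s_v-\bigl(u(T_k)+d(T_k)\bigr)\log 2 .
\]
Dividing by $k$ and passing to the limit, the three ingredients are: the additive functional $\tfrac1k\sum_v\log s_v\to\eta$, the unary density $\tfrac1k u(T_k)\to \tfrac13$, and the density $\tfrac1k d(T_k)\to\tfrac13-\epsilon$, where $\epsilon$ is the limiting density of two-child vertices whose two child subtrees are isomorphic as unordered trees. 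This gives $C_2=\eta-(\tfrac23-\epsilon)\log 2$ and $c_3=2C_2$.

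The main work, and the main obstacle, is the verification of (C2) with these constants. Two things must be done. First, I have to establish concentration: (C2a) requires that $-\tfrac1k\log\Prob_{\mathrm{unord}}$ exceeds $C_2-o(1)$ with probability $1-o(1)$, which I would obtain from a law of large numbers for additive tree functionals of binary search trees (a variance estimate or a contraction-method argument), applied simultaneously to $\sum_v\log s_v$, $u$ and $d$. Second, the constants must be evaluated exactly. The mean $\eta$ follows from the quicksort-type recurrence coming from $\Ex[\#\{v:s_v=j\}]\to \frac{2}{(j+1)(j+2)}k$, giving $\eta=2\sum_{j\ge 2}\frac{\log j}{(j+1)(j+2)}$; the unary density is exactly $\tfrac13$ by the same subtree-size count; and $\epsilon$ is a convergent sum over odd subtree sizes $s=2m+1$,
\[
\epsilon=\sum_{m\ge 1}\frac{2}{(2m+1)(2m+2)(2m+3)}\,Q_m,
\]
where $Q_m=\sum_{\abs{\tau}=m}\Prob_{\mathrm{unord}}(\tau)^2$ is the unordered-shape collision probability at size $m$ (so $Q_1=Q_2=1$, $Q_3=5/9$, and so on). Assembling $\eta$, $\tfrac13$ and $\epsilon$ yields $C_2\approx 0.7735$ and hence $c_3\approx 1.547$; the delicate part is controlling the tails of these series and the collision probabilities $Q_m$ precisely enough to certify the stated numerical constants.
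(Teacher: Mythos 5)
Your proposal follows essentially the same route as the paper: reduce to Theorem~\ref{thm:master-theorem-increasing} with $\alpha=-1/2$, $\kappa=2$, get $C_1$ from the Wedderburn--Etherington growth constant, and get $C_2$ from a law of large numbers for the additive functional $-\log\Prob_{\mathrm{unord}}$ --- indeed your surprisal $\sum_v\log s_v-(u+d)\log 2$ is exactly the paper's functional with toll function~\eqref{eq:noniso_toll} specialized to $d=2$, and your decomposition $C_2=\eta-(\tfrac23-\epsilon)\log 2$ is an equivalent closed-form evaluation of the series $\mu$ in Lemma~\ref{lem:additive_functionals_increasing}. The one step you only sketch, the convergence in probability of this (unbounded-toll) functional, is precisely what the paper supplies as Lemma~\ref{lem:additive_functionals_increasing}, proved by truncating to finite-support toll functions and invoking the known central limit theorems for those.
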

Both Theorem~\ref{thm:unordered-binary} and Theorem~\ref{thm:unorderedbst} were already given in the conference version \cite{SeelbachWagner20} of this paper\footnote{In the conference version \cite{SeelbachWagner20}, we consider \emph{full binary trees}, i.e., ordered trees such that each vertex has exactly two or zero descendants, whereas in this version, we allow binary trees to have (left- and right-) unary vertices. The respective probabilistic models are equivalent, see Section \ref{subsec:unordered}.}.
Additionally, we improve several existing results on the number of fringe subtrees in random trees. We show that the estimate from \cite[Theorem 4]{FlajoletSS90} and \cite[Theorem 3.1]{RalaivaosaonaW15} on the number of distinct fringe subtrees (as members of the particular family) in simply generated trees does not only hold in expectation, but also with high probability (see Theorem~\ref{thm:ordered-simply-generated}). Furthermore, we improve the lower bound on the number of distinct binary trees represented by the fringe subtrees of a random binary search tree: 
\begin{theo}\label{thm:orderedbst}
Let $H_n$ be the total number of distinct fringe subtrees in a random binary search tree of size $n$. For two constants $c_5 \approx 2.4071298335$ and $c_6 \approx 2.7725887222$, the following holds:
\begin{enumerate}
\item[(i)] $\displaystyle c_5 \frac{n}{\log n} (1+o(1)) \leq \Ex(H_n) \leq c_6 \frac{n}{\log n} (1+o(1))$,
\item[(ii)] $\displaystyle c_5 \frac{n}{\log n} (1+o(1)) \leq H_n \leq c_6 \frac{n}{\log n} (1+o(1))$ with high probability.
\end{enumerate}
\end{theo}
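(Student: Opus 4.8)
The plan is to deduce Theorem~\ref{thm:orderedbst} as a direct instance of Theorem~\ref{thm:master-theorem-increasing}, applied to the family of binary increasing trees (the $d$-ary increasing tree family with $d=2$, which is equivalent to the random binary search tree model) equipped with the \emph{finest} partition into isomorphism classes, in which two fringe subtrees are identified precisely when they coincide as binary trees. For this family $\alpha = -1/d = -1/2$, so $\kappa = 1/(1+\alpha) = 2$. Thus it suffices to verify conditions (C1) and (C2) for this partition and to identify the constants $C_1$ and $C_2$; the bounds of Theorem~\ref{thm:orderedbst} then read $c_5 = \kappa C_2$ and $c_6 = \kappa C_1$.

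\emph{Condition (C1).} Since the classes are singletons, $|\mathcal{I}_k|$ equals the number of binary trees with $k$ vertices, i.e.\ the Catalan number $C_k \sim 4^k/(\sqrt{\pi}\,k^{3/2})$. Hence $\frac{\log|\mathcal{I}_k|}{k} \to \log 4$, so that $C_1 = \log 4$ and $c_6 = \kappa C_1 = 4\log 2 \approx 2.7726$, matching the claimed constant.

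\emph{Condition (C2).} The key identity is that in the random binary search tree model the probability of a fixed shape $t$ of size $k$ equals $\Prob(t) = \prod_{v \in t} |t_v|^{-1}$, where $|t_v|$ denotes the size of the fringe subtree rooted at $v$; equivalently $-\log \Prob(t) = \sum_{v} \log |t_v|$, an additive functional of $t$ with logarithmic toll. Using the expected fringe-subtree profile of a random binary search tree $T_k$, namely $\Ex[\#\{v : |T_{v}| = j\}] = \frac{2(k+1)}{(j+1)(j+2)}$ for $1 \le j \le k-1$, linearity of expectation gives $\frac{1}{k}\Ex\big[\sum_v \log|T_v|\big] \to C_2$ with $C_2 = 2\sum_{j\ge 1}\frac{\log j}{(j+1)(j+2)}$. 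A variance bound (the variance of $\sum_v \log|T_v|$ being $O(k)$) then yields the weak law $\frac{1}{k}\sum_v \log|T_v| \to C_2$ in probability. Choosing $\mathcal{J}_k = \{t : \sum_v \log|t_v| \ge (C_2 - \varepsilon_k)k\}$ for $\varepsilon_k \to 0$ tending to zero slowly enough that $\varepsilon_k^2 k \to \infty$ (for instance $\varepsilon_k = k^{-1/4}$), condition (C2a) follows from the weak law via Chebyshev's inequality, while every $t \in \mathcal{J}_k$ satisfies $\Prob(t) \le e^{-(C_2-\varepsilon_k)k} = e^{-C_2 k + o(k)}$, which is (C2b). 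Consequently $c_5 = \kappa C_2 = 4\sum_{j\ge 1}\frac{\log j}{(j+1)(j+2)} \approx 2.4071$.

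The main obstacle is the concentration step in (C2): the toll function $\log|t_v|$ is unbounded, so one cannot simply invoke a bounded-toll limit theorem, and I would instead establish the $O(k)$ variance bound directly, tracking the second moment of the additive functional through the recursive (split-at-the-root) structure of the binary search tree and checking that the logarithmic growth of the toll is slow enough for the relevant sums to converge. A secondary, purely computational point is the numerical evaluation of the slowly converging series $\sum_{j\ge 1}\frac{\log j}{(j+1)(j+2)}$ to the stated precision, for which I would combine partial summation with an explicit tail estimate to obtain $c_5 \approx 2.4071298335$.
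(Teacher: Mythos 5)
Your proposal is correct and follows essentially the same route as the paper: Theorem~\ref{thm:master-theorem-increasing} applied to binary increasing trees ($\alpha=-1/2$, $\kappa=2$) with singleton isomorphism classes, $C_1=\log 4$ from the Catalan numbers, and $C_2$ extracted from the shape functional $\sum_v \log|t(v)|$ via the identity $\Prob(t)=\prod_v |t(v)|^{-1}$, so that your constants $c_5 = 4\sum_{j\ge 1}\frac{\log j}{(j+1)(j+2)}$ and $c_6 = 4\log 2$ coincide with the paper's $\underline{c}(2)$ and $\overline{c}(2)$. The only divergence is the concentration step for (C2a): the paper's Lemma~\ref{lem:additive_functionals_increasing} obtains convergence in probability by truncating the toll function and invoking known limit theorems for finite-support functionals, which requires only the first moments $\Ex|f(T_k)|=\log k$, whereas you propose a direct $O(k)$ variance bound plus Chebyshev --- this also works (the shape functional of a binary search tree does have linear variance), but it is more work than needed, since a qualitative weak law already yields a sequence $\varepsilon_k\to 0$ by the diagonal extraction the paper uses in the proof of Theorem~\ref{thm:plane-simply-generated}.
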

The upper bound in part (i) can already be found in \cite{FlajoletGM97} and \cite{Devroye98}. Moreover, a lower bound of the form $\Ex(H_n)\geq c n/\log (n)(1+o(1))$ was already shown in \cite{Devroye98} for the constant $c=(\log 3)/2 \approx 0.5493061443$ and in \cite{SeelbachLo18} for the constant $c \approx 0.6017824584$. So our new contributions in this special case are part (ii) and the improvement of the lower bound on $\Ex(H_n)$. Again, Theorem~\ref{thm:orderedbst} was already given in the conference version \cite{SeelbachWagner20} of this paper. 

Finally, we solve an open problem from \cite{BodiniGGLN20}, by proving that the number of distinct fringe subtrees in a random recursive tree of size $n$ is $\Theta(n/\log n)$ in expectation and with high probability (see Theorem~\ref{thm:unorderedrec}), thus showing a matching lower bound to the upper bound proved in \cite{BodiniGGLN20}.

\section{Preliminaries}

Let $t$ be a tree. We define the \emph{size} $|t|$ of $t$ as its number of vertices. Moreover, for a vertex $v$ of $t$, we denote with $\deg(v)$ the \emph{(out-)degree} of $v$, i.e.,~its number of children, and with $d_k(t)$ we denote the number of vertices of degree $k$ of $t$. 
A \emph{fringe subtree} of a tree $t$ is a subtree consisting of a vertex and all its descendants. For a tree $t$ and a given vertex $v$, let $t(v)$ denote the fringe subtree of $t$ rooted at $v$. For a family of trees $\mathcal{F}$, we will denote the subset of trees of size $k$ belonging to $\mathcal{F}$ by $\mathcal{F}_k$. Some important families of trees we will consider below are the following:

\begin{itemize}
\item \emph{Plane Trees:}
We write $\mathcal{T}$ for the family of plane trees, i.e.,~ordered rooted trees where each vertex has an arbitrary number of descendants, which are ordered from left to right. Moreover, we let $\mathcal{T}_k$ denote the set of plane trees of size $k$. 
\item \emph{Binary Trees:} The family of binary trees is the family of rooted ordered trees, such that each vertex has either (i) no children, (ii) a single left child, (iii) a single right child, or (iv) both a left and a right child. In other words, every vertex has two possible positions to which children can be attached. 
\item \emph{$d$-ary Trees:}
Binary trees naturally generalize to $d$-ary trees, for $d \geq 2$: a $d$-ary tree is an ordered tree where every vertex has $d$ possible positions to which children can be attached. Thus, the degree of a vertex $v$ of a $d$-ary tree is bounded above by $d$ and there are $\binom{d}{k}$ types of vertices of degree $k$ for $0 \leq k \leq d$: For example, if $d=3$, a vertex of degree $k=2$ can have a left and a middle child, a left and a right child, or a right and a middle child. Every $d$-ary tree can be considered as a plane tree by simply forgetting the positions to which the branches of the vertices are attached, respectively, by not distinguishing between different vertex types of degree $k$ for every $k \leq d$. This yields a partition of the set of $d$-ary trees into isomorphism classes, where two $d$-ary trees are considered as isomorphic if they correspond to the same plane tree. 
\item \emph{Unordered Trees:} An unordered tree is a rooted tree without an ordering on the descendants of the vertices. Every ordered tree can be considered as an unordered tree by simply forgetting the ordering on its vertices. This again yields a partition of the particular family of ordered trees into isomorphism classes, where two ordered trees are considered as isomorphic if they correspond to the same unordered tree. 
\item \emph{Labelled Trees:} A labelled tree of size $n$ is an unordered rooted tree whose vertices are labelled with the numbers $1,2,\ldots, n$. If we only take the shape of the tree into account, we can consider a labelled tree as an unordered tree: This yields a partition of the family of labelled trees into isomorphism classes, where we consider two labelled trees as isomorphic if their tree shapes are identical as unordered trees.
Furthermore, note that the labelling on the vertices of a labelled tree implicitly yields an ordering on the children of a vertex, if we sort them e.g.~in ascending order according to their labels. Thus, we can consider a labelled tree as a plane tree as well, if we first order the children of each vertex according to their labels, and then take only the shape of the tree into account.
\end{itemize}

\subsection{Simply generated families of trees and Galton--Watson trees}\label{sec:simply-generated}

A general concept to model various families of  trees is the concept of \emph{simply generated families of trees}:
It was introduced by Meir and Moon in \cite{MeirM78} (see also \cite{Drmota09, Janson12}). The main idea is to assign a weight to every plane tree $t \in \mathcal{T}$ which depends on the numbers $d_0(t), \ldots, d_{|t|}(t)
$ of vertices of degree $k$ for $0 \leq k \leq |t|$.
Let $(\phi_m)_{m \geq 0}$ denote a sequence of non-negative real numbers (called the \emph{weight sequence}), and let
\begin{align*}
\Phi(x)=\sum_{m \geq 0}\phi_mx^m.
\end{align*}
We define the \emph{weight} $w(t)$ of a plane tree $t$ as
\begin{align*}
w(t) =\prod_{v \in t}\phi_{\deg(v)}= \prod_{m\geq 0}\phi_m^{d_m(t)}.
\end{align*}
Moreover, let 
\begin{align*}
y_n=\sum_{t \in \mathcal{T}_n}w(t)
\end{align*}
denote the sum of all weights of plane trees of size $n$. It is well known that the generating function $Y(x) = \sum_{n \geq 1} y_n x^n$ satisfies
$$Y(x) = x \Phi(Y(x)).$$
A weight sequence $(\phi_m)_{m\geq 0}$ induces a probability mass function $P_{\Phi}: \mathcal{T}_n \to [0,1]$ on the set of plane trees of size $n$ by
\begin{align*}
P_{\Phi}(t) = \frac{w(t)}{y_n}
\end{align*}
for every $n\geq 0$ with $y_n>0$. We will tacitly assume that $y_n>0$ holds whenever we consider random plane trees of size $n$. A family $\mathcal{F}$ of trees is called simply generated if it can be described by a weight sequence $(\phi_k)_{k \geq 0}$, and random elements are generated according to the probability mass function $P_{\Phi}$. 

\begin{example}\label{ex:plane}
The family of plane trees is a simply generated family of trees with weight sequence $(\phi_k)_{k \geq 0}$ defined by $\phi_k=1$ for every $k \geq 0$: Thus, every plane tree $t$ is assigned the weight $w(t)=1$, the numbers $y_n$ count the number of distinct plane trees of size $n$, and the probability mass function $P_{\Phi}: \mathcal{T}_n \to [0,1]$ specifies the uniform probability distribution on $\mathcal{T}_n$.
\end{example}

\begin{example}\label{ex:dary}
The family of $d$-ary trees is obtained as the simply generated family of trees whose weight sequence $(\phi_k)_{k \geq 0}$ satisfies $\phi_m=\binom{d}{m}$ for every $m \geq 0$. This takes into account that there are $\binom{d}{m}$ many types of vertices of degree $m$ in $d$-ary trees. 
The weight $w(t)$ of a plane tree $t$ then equals the number of distinct $d$-ary trees with plane representation $t$ and the numbers $y_n$ count the number of distinct $d$-ary trees of size $n$. The uniform probability distribution on the set of $d$-ary trees of size $n$ thus induces the probability mass function $P_{\Phi}$ on $\mathcal{T}_n$ via the correspondence between $d$-ary trees and their plane representations.
\end{example}

\begin{example}\label{ex:motzkin}
The family of Motzkin trees is the family of ordered rooted trees such that each vertex has either zero, one or two children. In particular, we do not distinguish between left-unary and right-unary vertices as in the case of binary trees, i.e.,~there is only one type of unary vertices. The weight sequence $(\phi_k)_{k \geq 0}$ with $\phi_0=\phi_1=\phi_2=1$ and $\phi_k=0$ for $k \geq 3$ corresponds to the simply generated family of Motzkin trees, and the probability mass function $P_{\Phi}: \mathcal{T}_n \to [0,1]$ corresponds to the uniform probability distribution on the set of Motzkin trees of size $n$. 
\end{example}

\begin{example}\label{ex:labeled}
 Given an (unordered) labelled tree $t$, there are $\prod_{v \in t}\deg(v)!$ many possibilities to define an ordering on its vertices in order to obtain an ordered labelled tree, that is, $\prod_{v \in t}\deg(v)!$ many ordered labelled trees correspond to the same unordered labelled tree $t$. Furthermore, there are $n!$ many possibilities to label a plane tree of size $n$ in order to obtain an ordered labelled tree, that is, a plane tree of size $n$ corresponds to $n!$ ordered labelled trees.  The family of (unordered) labelled trees is obtained as the simply generated family of trees whose weight sequence $(\phi_k)_{k \geq 0}$ satisfies $\phi_k=1/k!$ for every $k \geq 0$: Thus, the weight of a plane tree $t$ equals $(\prod_{v \in t}\deg(v)!)^{-1}$, and the total weight $y_n$ of all plane trees of size $n$ equals $\frac{1}{n!}$ times the number of unordered labelled trees of size $n$. 
\end{example}

Closely related to the concept of simply generated families of trees is the concept of Galton--Watson processes:
Let $\xi$ be a non-negative integer-valued random variable (called an \emph{offspring distribution}). 
A \emph{Galton--Watson branching process} (see for example \cite{Janson12}) with offspring distribution $\xi$ generates a random plane tree $T$ as follows: in a top-down way, starting at the root vertex, we determine for each vertex $v$ of $T$ independently its degree $\deg(v)$ according to the distribution $\xi$. The probability that $\deg(v)=k$ for some integer $k$ is given by $\Prob(\xi=k)$. If $\deg(v)=k>0$, we attach $k$ new vertices to $v$ and the process continues at these newly attached vertices. If $\deg(v)=0$, the process stops at this vertex. It is thus convenient to assume that $\Prob(\xi=0)>0$. Note that this process might generate infinite trees with non-zero probability. 
We find that the probability $\nu(t)$ that a tree $t \in \mathcal{T}$ is generated by a Galton--Watson branching process with offspring distribution $\xi$ is
\begin{align*}
\nu(t)=\prod_{v \in t} \Prob(\xi=\deg(v))=\prod_{k \geq 0}\Prob(\xi=k)^{d_k(t)}.
\end{align*}
A random plane tree generated by a Galton--Watson process is called an \emph{unconditioned Galton--Watson tree}. Conditioning the Galton--Watson tree on the event that $|T|=n$, we obtain a probability mass function $P_{\xi}$ on the set $\mathcal{T}_n$ of plane trees
of size $n$ defined by
\begin{align*}
P_{\xi}(t)=\frac{\nu(t)}{\sum_{t' \in \mathcal{T}_n}\nu(t')}.
\end{align*}
A random variable taking values in $\mathcal{T}_n$ according to the probability mass function $P_{\xi}$ is called a \emph{conditioned Galton--Watson tree} of size $n$. 
A Galton--Watson process with offspring distribution $\xi$ that satisfies $\Ex(\xi)=1$ is called \emph{critical}. \\

Let $\mathcal{F}$ be a simply generated family of trees with weight distribution $(\phi_m)_{m \geq 0}$. In many cases, it is possible to view a random tree of size $n$ drawn from $\mathcal{T}_n$ according to the probability mass function $P_{\Phi}$ as a conditioned Galton--Watson tree (see for example \cite{Janson12}): 
let $R>0$ denote the radius of convergence of the series $\Phi(x)=\sum_{k \geq 0}\phi_kx^k$, and assume that there is $\tau \in (0,R]$ with $\tau\Phi'(\tau)=\Phi(\tau)$. Define an offspring distribution $\xi$ by
\begin{align}\label{eq:xi-according-to-phi}
\Prob(\xi=m)=\phi_m \tau^m\Phi(\tau)^{-1}
\end{align}
for every $m \geq 0$.
This is well-defined, as
\begin{align}\label{eq:xi-well-defined}
\sum_{m \geq 0}\Prob(\xi=m)=\sum_{m \geq 0}\frac{\phi_m \tau^m}{\Phi(\tau)}=\frac{\Phi(\tau)}{\Phi(\tau)}=1,
\end{align}
and furthermore, we have
\begin{align}\label{eq:xi-expectation}
\Ex(\xi)=\sum_{m \geq 0}m\Prob(\xi=m)=\sum_{m \geq 0}\frac{m\phi_m \tau^m}{\Phi(\tau)}=\frac{\tau \Phi'(\tau)}{\Phi(\tau)}=1.
\end{align}
Thus, $\xi$ is an offspring distribution of a critical Galton--Watson process. In particular, $\xi$ defined as in \eqref{eq:xi-according-to-phi} induces the same probability mass function on $\mathcal{T}_n$ as the weight sequence
$(\phi_m)_{m \geq 0}$, since we have 
\begin{align}\label{eq:xi-distributions-same}
P_{\xi}(t)=\frac{\nu(t)}{\sum_{t' \in \mathcal{T}_n}\nu(t')}
=\frac{\Phi(\tau)^n\tau^{n-1}\prod_{k \geq 0}(\phi_k)^{d_k(t)}}{\Phi(\tau)^n\tau^{n-1}\sum_{t' \in \mathcal{T}_n}\prod_{k \geq 0}(\phi_k)^{d_k(t')}}=\frac{w(t)}{\sum_{t' \in \mathcal{T}_n}w(t')}=P_{\Phi}(t).
\end{align}
Hence, many results proved in the context of Galton--Watson trees become applicable in the setting of simply generated families of trees. For a given simply generated family of trees $\mathcal{F}$ with weight sequence $(\phi_k)_{k \geq 0}$, we call the Galton--Watson process with offspring distribution $\xi$ defined as in \eqref{eq:xi-according-to-phi} the \emph{Galton--Watson process corresponding to $\mathcal{F}$}.
Regarding the variance of $\xi$, we find
\begin{align}\label{eq:xi-finite-variance}
\mathbb{V}(\xi)=\Ex(\xi^2)-\Ex(\xi)^2=\Ex(\xi(\xi-1))=\sum_{m \geq 0}m(m-1)\frac{\phi_m\tau^m}{\Phi(\tau)}=\frac{\tau^2\Phi''(\tau)}{\Phi(\tau)}.
\end{align}
Note that if $\tau < R$, then $\mathbb{V}(\xi) < \infty$, but if $\tau=R$, $\mathbb{V}(\xi)$ might be infinite. However, we will only consider weight sequences $(\phi_k)_{k\geq 0}$ for which the corresponding offspring distribution $\xi$ satisfies $\mathbb{V}(\xi)<\infty$.

\setcounter{example}{0}

\begin{example} (continued)
For the family of plane trees, we have $\Phi(x)=\sum_{k \geq 0}x^k$. We find that $\tau = 1/2$ solves the equation $\tau \Phi'(\tau)=\Phi(\tau)$. Thus, the offspring distribution $\xi$ of the Galton--Watson process corresponding to the family of plane trees is given by $\Prob(\xi=m)=2^{-m-1}$ for every $m \geq 0$ (a geometric distribution).
\end{example}

\begin{example} (continued)
For the family of $d$-ary trees, we find $\Phi(x)=(1+x)^d$ and $\tau = (d-1)^{-1}$. The offspring distribution $\xi$ of the Galton--Watson process corresponding to the family of $d$-ary trees is a binomial distribution with $\Prob(\xi=m)=\binom{d}{m}d^{-d}(d-1)^{d-m}$ for $0 \leq m \leq d$. 
\end{example}

\begin{example} (continued)
In the case of Motzkin trees, we have $\Phi(x)=1+x+x^2$ and $\tau=1$. The Galton--Watson process with offspring distribution $\xi$ defined by $\Prob(\xi=m)=1/3$ if $0 \leq m \leq 2$ and $\Prob(\xi=m)=0$ otherwise corresponds to the family of Motzkin trees.
\end{example}

\begin{example} (continued)
We obtain $\Phi(x)=e^x$ for the family of labelled trees. The equation $\tau \Phi'(\tau)=\Phi(\tau)$ is solved by $\tau = 1$ in this case. The Galton--Watson process corresponding to the family of labelled trees is thus defined by the offspring distribution $\xi$ with $\Prob(\xi=m)=(em!)^{-1}$ for every $m \geq 0$ (i.e., $\xi$ is a Poisson distribution).
\end{example}

Our first ingredient on the way to our main result is the following lemma on the total number of fringe subtrees of a given size in a conditioned Galton--Watson tree $T_n$ of size $n$:

\begin{lemma}\label{lemma:galton-expectation-variance}
Let $Z_{n,k}$ be the number of fringe subtrees of size $k$ in a conditioned Galton--Watson tree of size $n$ whose  offspring distribution $\xi$ satisfies $\Ex(\xi)=1$ and $\mathbb{V}(\xi)=\sigma^2<\infty$.
Then we have 
\begin{equation}\label{eq:exp_val_1}
\Ex(Z_{n,k}) =\frac{n}{\sqrt{2\pi\sigma^2}k^{3/2}}(1+o(1)),
\end{equation}
 and $\mathbb{V}(Z_{n,k}) = O(n/k^{3/2})$ uniformly in $k$ for $k \leq \sqrt{n}$ as $k,n \to \infty$. Moreover, for all $k \leq n$, we have 
\begin{equation}\label{eq:exp_val_2}
\Ex(Z_{n,k}) = O \Big( \frac{n^{3/2}}{k^{3/2}(n-k+1)^{1/2}} \Big).
\end{equation}
\end{lemma}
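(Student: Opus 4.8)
The plan is to work throughout with the equivalent Galton--Watson description of $T_n$ and to reduce every quantity to the coefficient asymptotics of a handful of explicit generating functions. Write $f(s)=\Ex(s^\xi)$ for the offspring probability generating function and let $B(x)=\sum_{m\ge 1}p_m x^m$, where $p_m=\Prob(|T|=m)$ is the total progeny distribution of the \emph{unconditioned} process; then $B(x)=xf(B(x))$, with $B(1)=1$, $f'(1)=1$ and $f''(1)=\sigma^2$. First I would derive an \emph{exact} formula for the mean by the one-hole (``context'') decomposition: a vertex $v$ with $|T_n(v)|=k$ splits $T_n$ into its fringe subtree (an arbitrary tree of size $k$) and the context obtained by deleting all proper descendants of $v$. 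Since the offspring probabilities are multiplicative, summing over $v$ gives
\[
\Ex(Z_{n,k})=\frac{W_{n-k+1}\,p_k}{p_n},\qquad W(x)=\sum_m W_m x^m=\frac{x}{1-xf'(B(x))},
\]
where $W(x)$ is the generating function of contexts with one hole (a spine from the root to the hole with full subtrees hanging off) and the index counts all vertices of the context including the hole. Differentiating $B=xf(B)$ yields the convenient identity $W(x)=x^2B'(x)/B(x)$, and one checks consistency at the boundary ($k=n$ gives $W_1=1$, i.e. $\Ex(Z_{n,n})=1$; $k=1$ reproduces the expected number of leaves).

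Next I would collect the coefficient asymptotics. From the square-root singularity $B(x)=1-\tfrac{\sqrt 2}{\sigma}(1-x)^{1/2}+\cdots$ at $x=1$, singularity analysis gives $W(x)\sim \tfrac{1}{\sqrt 2\,\sigma}(1-x)^{-1/2}$, hence by the transfer theorem (and, for $p_m$, equivalently Dwass's formula $p_m=\tfrac1m\Prob(\xi_1+\cdots+\xi_m=m-1)$ together with the local limit theorem)
\[
p_m=\frac{1}{\sqrt{2\pi\sigma^2}}\,m^{-3/2}(1+o(1)),\qquad W_m=\frac{1}{\sqrt{2\pi\sigma^2}}\,m^{-1/2}(1+o(1)),
\]
together with the uniform bounds $p_m=O(m^{-3/2})$ and $W_m=O(m^{-1/2})$ for all $m\ge 1$. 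Substituting into the exact formula gives, uniformly for all $k\le n$,
\[
\Ex(Z_{n,k})=\frac{1}{\sqrt{2\pi\sigma^2}}\cdot\frac{n^{3/2}}{k^{3/2}(n-k+1)^{1/2}}(1+o(1)),
\]
which is \eqref{eq:exp_val_2} as an $O$-bound and collapses to \eqref{eq:exp_val_1} once $k=o(n)$ (in particular for $k\le\sqrt n$), since then $(n-k+1)^{1/2}\sim n^{1/2}$.

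For the variance I would pass to the second factorial moment. The crucial structural point is that two distinct vertices whose fringe subtrees have the \emph{same} size $k$ can never be in an ancestor--descendant relation, so $\Ex(Z_{n,k}(Z_{n,k}-1))$ counts ordered pairs of \emph{incomparable} vertices and admits the analogous exact formula $\Ex(Z_{n,k}(Z_{n,k}-1))=W^{(2)}_{n-2k+2}\,p_k^2/p_n$, where $W^{(2)}(x)$ enumerates two-hole contexts (a spine down to a branch vertex from which two further spines lead to the two ordered holes). Tracking the branch vertex gives the clean identity
\[
W^{(2)}(x)=f''(B(x))\,W(x)^3 ,
\]
(the above-branch part contributes $W$, the branching a factor $f''(B)$, the two sub-contexts $W^2$), which has a $(1-x)^{-3/2}$ singularity, whence $W^{(2)}_m=O(m^{1/2})$. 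Writing $\mathbb{V}(Z_{n,k})=\Ex(Z_{n,k}(Z_{n,k}-1))+\Ex(Z_{n,k})-\Ex(Z_{n,k})^2$ and inserting the exact formulas, the two terms $\Ex(Z_{n,k}(Z_{n,k}-1))$ and $\Ex(Z_{n,k})^2$ are each of order $n^2/k^3$; because $f''(B(x))\to\sigma^2$ at the singularity, the relation $W^{(2)}\sim\sigma^2 W^3$ forces their leading coefficients to coincide exactly, so their difference is of lower order and only the $\Ex(Z_{n,k})=O(n/k^{3/2})$ term survives at the claimed magnitude. The hypothesis $k\le\sqrt n$ is used to ensure $n-2k+2\sim n$, keeping all coefficient estimates inside their asymptotic regime.

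The main obstacle is precisely this cancellation: since $\Ex(Z_{n,k}(Z_{n,k}-1))$ and $\Ex(Z_{n,k})^2$ are both $\Theta(n^2/k^3)$, a leading-order estimate is useless, and one must upgrade the leading-order identity $W^{(2)}\sim\sigma^2 W^3$ into a \emph{quantitative} bound $\Ex(Z_{n,k}(Z_{n,k}-1))-\Ex(Z_{n,k})^2=O(n/k^{3/2})$ uniformly in $k\le\sqrt n$; this requires controlling the subleading singular terms of $W$, $W^{(2)}$ and $p_m$ (and the error in approximating $(n-2k+2)^{1/2}$ by $n^{1/2}$), which is where I expect the real work to lie, whereas the expectation bounds \eqref{eq:exp_val_1} and \eqref{eq:exp_val_2} follow routinely. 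One minor point to address throughout is periodicity of $\xi$: a span $d>1$ multiplies each local estimate by $d$ and would replace the constant $1/\sqrt{2\pi\sigma^2}$ by $d/\sqrt{2\pi\sigma^2}$; since all families considered here are aperiodic, the stated constant is correct.
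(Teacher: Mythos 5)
Your treatment of the expectation is essentially sound and is in fact equivalent to what the paper does: your exact formula $\Ex(Z_{n,k})=W_{n-k+1}p_k/p_n$ is a generating-function restatement of Janson's identity $\Ex(Z_{n,k})=\frac{\Prob(S_{n-k}=n-k)}{\Prob(S_n=n-1)}q_k n$ (indeed $W_{m+1}=\Prob(S_m=m)$ and $p_n=\frac1n\Prob(S_n=n-1)$ by Dwass's formula), and the asymptotics for $p_m$ and $W_m$ then follow from the local limit theorem. One caution here: the lemma assumes only $\Ex(\xi)=1$ and $\mathbb{V}(\xi)<\infty$, so you are not entitled to Flajolet--Odlyzko transfer theorems (there is no guaranteed analytic continuation of $B$ beyond its disk of convergence, and the paper explicitly allows $\tau=R$). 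You should therefore route \emph{all} coefficient estimates through the local limit theorem via the probabilistic identities above, not through singularity analysis; this is exactly what the paper does by citing Lemmas 5.1 and 5.2 and Eq.~(4.13) of Janson's paper.

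The genuine gap is the variance. You correctly observe that $\Ex(Z_{n,k}(Z_{n,k}-1))$ and $\Ex(Z_{n,k})^2$ are both of order $n^2/k^3$ and that the claim $\mathbb{V}(Z_{n,k})=O(n/k^{3/2})$ requires their difference to be smaller by a factor of $k^{3/2}/n$ (as bad as $n^{-1/4}$ at $k=\sqrt n$, and nearly $n^{-1}$ for small $k$) --- but you then defer precisely this step, and the route you sketch for it does not work under the stated hypotheses. Matching leading singular terms via $W^{(2)}\sim\sigma^2W^3$ only shows the difference is $o(n^2/k^3)$, which is far too weak; and ``controlling the subleading singular terms of $W$, $W^{(2)}$ and $p_m$'' is not merely tedious: the next-order term in the singular expansion of $B$ at $x=1$ involves the third moment of $\xi$, which is not assumed finite, so there is no usable second-order singular expansion in general. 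What is actually needed is the quantitative estimate
\begin{equation*}
\frac{\Prob(S_{n-2k}=n-2k+1)}{\Prob(S_n=n-1)}-\Big(\frac{\Prob(S_{n-k}=n-k)}{\Prob(S_n=n-1)}\Big)^2=O\Big(\frac1n\Big)
\end{equation*}
uniformly for $k\le\sqrt n$, which is Lemma 6.2 of Janson's paper and is proved there by characteristic-function/local-limit-theorem arguments, not by comparing singular expansions. Combined with the exact second-moment formula (Janson's Lemma 6.1, which is the rigorous form of your two-hole decomposition), this yields $\mathbb{V}(Z_{n,k})=O(q_kn)=O(n/k^{3/2})$. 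Without that ingredient, or an equivalent one, your proof of the variance bound is incomplete.
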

\begin{proof}
We make extensive use of the results in Janson's paper \cite{Janson16}. Let $S_n$ be the sum of $n$ independent copies of the offspring distribution: $S_n = \sum_{i = 1}^n \xi_i$. By \cite[Lemma 5.1]{Janson16}, we have
$$\Ex(Z_{n,k}) = \frac{\Prob(S_{n-k} = n-k)}{\Prob(S_n = n-1)} q_k n,$$
where $q_k$ is the probability that an \emph{unconditioned} Galton--Watson tree with offspring distribution $\xi$ has final size $k$. Moreover, by \cite[Lemma 5.2]{Janson16}, we have
$$\frac{\Prob(S_{n-k} = n-k)}{\Prob(S_n = n-1)} = 1 + O \Big ( \frac{k}{n} \Big) + o(n^{-1/2})$$
uniformly for all $k$ with $1 \leq k \leq \frac{n}{2}$ as $n \to \infty$, and by \cite[Eq.~(4.13)]{Janson16} (see also Kolchin \cite{Kolchin86}),
$$q_k \sim \frac{1}{\sqrt{2\pi \sigma^2}} k^{-3/2}$$
as $k \to \infty$. Combining the two, we obtain the desired asymptotic formula~\eqref{eq:exp_val_1} for $\Ex(Z_{n,k})$ if $k \leq \sqrt{n}$ and both $k$ and $n$ tend to infinity. For arbitrary $k$, \cite[Lemma 5.2]{Janson16} states that
$$\frac{\Prob(S_{n-k} = n-k)}{\Prob(S_n = n-1)} = O \Big( \frac{n^{1/2}}{(n-k+1)^{1/2}} \Big).$$
The estimate~\eqref{eq:exp_val_2} follows.

\medskip

For the variance, we can similarly employ \cite[Lemma 6.1]{Janson16}, which gives us
\begin{align*}
\mathbb{V}(Z_{n,k}) &= \frac{\Prob(S_{n-k} = n-k)}{\Prob(S_n = n-1)} q_k n  - \Big( \frac{\Prob(S_{n-k} = n-k)}{\Prob(S_n = n-1)} \Big)^2 q_k^2 n(2k-1) \\
&\quad + \Big( \frac{\Prob(S_{n-2k} = n-2k+1)}{\Prob(S_n = n-1)} - \Big( \frac{\Prob(S_{n-k} = n-k)}{\Prob(S_n = n-1)} \Big)^2 \Big) q_k^2n(n-2k+1).
\end{align*}
Finally, by \cite[Lemma 6.2]{Janson16},
$$\frac{\Prob(S_{n-2k} = n-2k+1)}{\Prob(S_n = n-1)} - \Big( \frac{\Prob(S_{n-k} = n-k)}{\Prob(S_n = n-1)} \Big)^2 = O \Big( \frac{1}{n} \Big)$$
for $k \leq \sqrt{n}$, uniformly in $k$. Combining all estimates yields $\mathbb{V}(Z_{n,k}) = O(q_kn) = O(n/k^{3/2})$, which completes the proof.
\end{proof}

From this, we can now derive the following lemma on fringe subtrees of a random tree $T_n$ of size $n$ drawn from a simply generated family $\mathcal{F}$:

\begin{lemma}\label{lemma:galton-watson-sk}
Let $T_n$ be a random tree of size $n$ drawn randomly from a simply generated family of trees $\mathcal{F}$ such that the offspring distribution $\xi$ of the corresponding critical Galton--Watson process satisfies $\mathbb{V}(\xi)=\sigma^2<\infty$. Let $a, \varepsilon$ be positive real numbers with $\varepsilon<\frac{1}{2}$. For every positive integer $k$ with $a \log n \leq k \leq n^{\varepsilon}$, let $\mathcal{S}_k \subseteq \mathcal{F}_k$ be a subset of trees of size $k$ from $\mathcal{F}$, and let $p_k$ be the probability that a random tree of size $k$ from the given family $\mathcal{F}$ belongs to $\mathcal{S}_k$. 
Now let $X_{n,k}$ denote the (random) number of fringe subtrees of size $k$ in the random tree $T_n$ which belong to $\mathcal{S}_k$. Moreover, let $Y_{n, \varepsilon}$ denote the (random) number of arbitrary fringe subtrees of size greater than $n^{\varepsilon}$ in $T_n$.
Then
\begin{itemize}
\item[(a)] $\Ex(X_{n,k})=p_kn(2\pi\sigma^2k^3)^{-1/2}(1+o(1))$, for all $k$ with $a\log n \leq k \leq n^{\varepsilon}$, the $o$-term being independent of $k$,
\item[(b)]$\mathbb{V}(X_{n,k})=O(p_kn/k^{3/2})$ for all $k$ with $a\log n \leq k \leq n^{\varepsilon}$, again with an $O$-constant independent of $k$,
\item[(c)]$\Ex(Y_{n, \varepsilon})=O(n^{1-\varepsilon/2})$, and
\item[(d)] with high probability, the following statements hold simultaneously:
\begin{itemize}
\item[(i)] $|X_{n,k}-\Ex(X_{n,k})|\leq p_k^{1/2}n^{1/2+\varepsilon}k^{-3/4}$ for all $k$ with $a \log k \leq k \leq n^{\varepsilon}$,
\item[(ii)]$Y_{n, \varepsilon}\leq n^{1-\varepsilon/3}$.
\end{itemize}
\end{itemize}
\end{lemma}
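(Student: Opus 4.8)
The plan is to reduce all four parts to Lemma~\ref{lemma:galton-expectation-variance} together with one elementary structural observation: conditionally on the ``skeleton'' of $T_n$ (the part of the tree lying outside its size-$k$ fringe subtrees), the size-$k$ fringe subtrees are independent copies of a random tree of size $k$ drawn from $\mathcal{F}$. This is a consequence of the branching property of (conditioned) Galton--Watson trees: once one fixes which vertices are the roots of the size-$k$ fringe subtrees and how they are arranged, the weight of $T_n$ factors as a product over these subtrees, and each factor is precisely the weight inducing $P_\Phi$ on $\mathcal{F}_k$; since a tree of size $k$ cannot contain a proper fringe subtree of size $k$, no additional constraint is placed on the internal structures. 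Hence, conditionally on $Z_{n,k}$, the count $X_{n,k}$ is $\mathrm{Binomial}(Z_{n,k},p_k)$.

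From this, part (a) is immediate: $\Ex(X_{n,k})=\Ex(\Ex(X_{n,k}\mid Z_{n,k}))=p_k\Ex(Z_{n,k})$, and substituting the asymptotics~\eqref{eq:exp_val_1} (valid uniformly for $a\log n\le k\le n^\varepsilon\le\sqrt n$ because $\varepsilon<\tfrac12$ and $k\to\infty$) gives the claim. For part (b), the conditional variance formula yields
\[
\Var(X_{n,k})=p_k(1-p_k)\Ex(Z_{n,k})+p_k^2\,\Var(Z_{n,k}),
\]
and since $\Ex(Z_{n,k})=O(n/k^{3/2})$ and $\Var(Z_{n,k})=O(n/k^{3/2})$ by Lemma~\ref{lemma:galton-expectation-variance}, both summands are $O(p_kn/k^{3/2})$, uniformly in $k$. (Alternatively, one can bypass the conditioning and obtain (a) and (b) by summing Janson's expectation and covariance formulas over $t\in\mathcal{S}_k$, using $\sum_{t\in\mathcal{S}_k}\nu(t)=p_kq_k$; this reproduces the variance formula from the proof of Lemma~\ref{lemma:galton-expectation-variance} with $q_k$ replaced throughout by $p_kq_k$.)

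For part (c), write $Y_{n,\varepsilon}=\sum_{k>n^\varepsilon}Z_{n,k}$ and bound each summand by~\eqref{eq:exp_val_2}. Splitting the sum at $k=n/2$, the range $n^\varepsilon<k\le n/2$ contributes $O\big(n\sum_{k>n^\varepsilon}k^{-3/2}\big)=O(n^{1-\varepsilon/2})$, while the range $n/2<k\le n$ contributes $O\big(\sum_{1\le j\le n/2}j^{-1/2}\big)=O(n^{1/2})$, which is again $O(n^{1-\varepsilon/2})$ since $\varepsilon<1$. This proves $\Ex(Y_{n,\varepsilon})=O(n^{1-\varepsilon/2})$.

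Finally, the high-probability statements in (d) follow from concentration inequalities combined with a union bound. For (ii), Markov's inequality gives $\Prob(Y_{n,\varepsilon}>n^{1-\varepsilon/3})\le \Ex(Y_{n,\varepsilon})/n^{1-\varepsilon/3}=O(n^{-\varepsilon/6})\to0$. For (i), Chebyshev's inequality applied to a single $k$ gives, with $t_k=p_k^{1/2}n^{1/2+\varepsilon}k^{-3/4}$,
\[
\Prob\big(|X_{n,k}-\Ex(X_{n,k})|>t_k\big)\le\frac{\Var(X_{n,k})}{t_k^2}=O\!\left(\frac{p_kn/k^{3/2}}{p_kn^{1+2\varepsilon}k^{-3/2}}\right)=O(n^{-2\varepsilon}),
\]
uniformly in $k$ by (b); since there are at most $n^\varepsilon$ admissible values of $k$, a union bound bounds the probability that (i) fails for some $k$ by $O(n^{-\varepsilon})\to0$. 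The one delicate point in the whole argument is the conditional independence used in the first paragraph: it is exactly what produces the extra factor $p_k$ in the variance bound (b), which in turn is what makes the deviation $t_k$ small enough for the union bound in (d)(i) to close. Everything else is a routine application of Lemma~\ref{lemma:galton-expectation-variance} and standard tail bounds.
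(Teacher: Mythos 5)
Your proposal is correct and follows essentially the same route as the paper: it represents $X_{n,k}$ conditionally on $Z_{n,k}$ as a binomial (sum of $Z_{n,k}$ Bernoulli variables with parameter $p_k$), derives (a) and (b) from the mean and variance formulas combined with Lemma~\ref{lemma:galton-expectation-variance}, and obtains (d) via Chebyshev's inequality plus a union bound for (i) and Markov's inequality for (ii). The only difference is that you spell out the summation for part (c) using~\eqref{eq:exp_val_2} in more detail than the paper does, which is a harmless elaboration.
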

We emphasize (since it will be important later) that the inequality in part (d), item (i), does not only hold with high probability for each individual $k$, but that it is satisfied with high probability for all $k$ in the given range simultaneously.

\begin{proof}
Let $Z_{n,k}$ again denote the number of fringe subtrees of size $k$ in the conditioned Galton--Watson tree of size $n$ with offspring distribution $\xi$. Then $Z_{n,k}$ and the random number of fringe subtrees of size $k$ in a random tree $T_n$ of size $n$ drawn randomly from the simply generated family $\mathcal{F}$ are identically distributed. Furthermore, if a random tree $T_n$ of size $n$ drawn from $\mathcal{F}$ contains $Z_{n,k}$ many fringe subtrees of size $k$, then these fringe subtrees are again independent random trees in $\mathcal{F}_k$ with the same distribution.
Thus, $X_{n,k}$ can be regarded as a sum of $Z_{n,k}$ many Bernoulli random variables with probability $p_k$. We thus have (see \cite[Theorem 15.1]{Gut2005})
\begin{align*}
\Ex(X_{n,k})=p_k\Ex(Z_{n,k})=\frac{n p_k}{\sqrt{2 \pi \sigma^2}k^{3/2}}(1+o(1)),
\end{align*}
as well as
\begin{align*}
\mathbb{V}(X_{n,k})&=p_k^2\mathbb{V}(Z_{n,k})+p_k(1-p_k)\Ex(Z_{n,k})= O\left(\frac{np_k}{k^{3/2}}\right)
\end{align*}
by Lemma~\ref{lemma:galton-expectation-variance}, which proves part (a) and part (b). 
For part (c), we observe that
\begin{align*}
\Ex(Y_{n, \varepsilon})=\sum_{k >n^{\varepsilon}}\Ex(Z_{n,k})=O\left(n^{1-\varepsilon/2}\right),
\end{align*}
again by Lemma~\ref{lemma:galton-expectation-variance}.
In order to show part (d), we apply Chebyshev's inequality to obtain concentration on $X_{n,k}$: 
\begin{align*}
\Prob\left(|X_{n,k}-\Ex(X_{n,k})|\geq p_k^{1/2}n^{1/2+\varepsilon}k^{-3/4}\right)\leq \frac{\mathbb{V}(X_{n,k})}{p_kn^{1+2\varepsilon}k^{-3/2}}=O(n^{-2\varepsilon}).
\end{align*}
Hence, by the union bound, the probability that the stated inequality fails for any $k$ in the given range is only $O(n^{-\varepsilon})$, proving that the first statement holds with high probability. Finally, Markov's inequality implies that
\begin{align*}
\Prob\left(Y_{n, \varepsilon}>n^{1-\varepsilon/3}\right)\leq \frac{\Ex(Y_{n, \varepsilon})}{n^{1-\varepsilon/3}}=O(n^{-\varepsilon/6}),
\end{align*}
showing that the second inequality holds with high probability as well. 
\end{proof}

\subsection{Families of increasing trees}\label{sec:increasing}

An increasing tree is a rooted tree whose vertices are labelled $1,2,\ldots,n$ in such a way that the labels along any path from the root to a leaf are increasing. If one assigns a weight function to these trees in the same way as for simply generated trees, one obtains a simple variety of increasing trees.
The exponential generating function for the total weight satisfies the differential equation
\begin{equation}\label{de:inctrees}
Y'(x) = \Phi(Y(x)).
\end{equation}
A general treatment of simple varieties of increasing trees was given by Bergeron, Flajolet and Salvy in \cite{BergeronFS92}. Three special cases are of particular interest, as random elements from these families can be generated by a simple growth process. These are:
\begin{itemize}
\item recursive trees, where $\Phi(t) = e^t$;
\item generalized plane-oriented recursive trees (gports), where $\Phi(t) = (1-t)^{-r}$;
\item $d$-ary increasing trees, where $\Phi(t) = (1+t)^d$.
\end{itemize}
They are the increasing tree analogues of labelled trees, (generalized) plane trees and $d$-ary trees, respectively.
Collectively, these are sometimes called \emph{very simple families of increasing trees} \cite{Panholzer2007level}. In all these cases, the differential equation~\eqref{de:inctrees} has a simple explicit solution, namely
\begin{itemize}
\item $Y(x) = - \log(1-x)$ for recursive trees,
\item $Y(x) = 1 - (1-(r+1)x)^{1/(r+1)}$ for gports,
\item $Y(x) = (1-(d-1)x)^{-1/(d-1)} - 1$ for $d$-ary increasing trees.
\end{itemize}
It follows that the number (total weight, in the case of gports) of trees with $n$ vertices is
\begin{itemize}
\item $(n-1)!$ for recursive trees,
\item $\prod_{k=1}^{n-1} (k(r+1)-1)$ for gports,
\item $\prod_{k=1}^{n-1} (1+k(d-1))$ for $d$-ary increasing trees (in particular, $n!$ for binary increasing trees).
\end{itemize}
There is a natural growth process to generate these trees randomly: start with the root, which is labelled $1$. The $n$-th vertex (labelled $n$) is attached at random to one of the previous $n-1$ vertices, with a probability that is proportional to a linear function of the (out-)degree. Specifically, setting $\alpha = 0$ for recursive trees, $\alpha = 1/r$ for gports and $\alpha = -1/d$ for $d$-ary increasing trees, the probability to attach to a vertex $v$ with degree (number of children) $\ell$ is always proportional to $1 + \alpha \ell$. So in particular, all vertices are equally likely for recursive trees, vertices can only have up to $d$ children in $d$-ary increasing trees (since then the probability to attach further vertices becomes $0$), and vertices in generalized plane-oriented trees have a higher probability to become parent of a new vertex if they already have many children; hence they are also called preferential attachment trees.

It is well known that the special case $d = 2$ of $d$-ary increasing trees leads to a model of random binary trees that is equivalent to binary search trees, see for example \cite{Drmota09}.

We make use of known results on the total number of fringe subtrees of a given size in very simple families of increasing trees. In particular, we have the following formulas for the mean and variance (see \cite{Fuchs2012}):

\begin{lemma}\label{lemma:increasing-expectation-variance}
Consider a very simple family of increasing trees, and let $\alpha$ be defined as above. For every $k < n$, let $Z_{n,k}$ be the random number of fringe subtrees of size $k$ in a random tree of size $n$ drawn from the simple family of increasing trees. Then the expectation of $Z_{n,k}$ satisfies
\begin{align*}
\Ex(Z_{n,k})= \frac{(1+\alpha)n-\alpha}{((1+\alpha)k+1)((1+\alpha)k-\alpha)},
\end{align*}
and for the variance of $Z_{n,k}$, we have $\mathbb{V}(Z_{n,k})=O(n/k^2)$ uniformly in $n$ and $k$.

\end{lemma}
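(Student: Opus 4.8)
The plan is to exploit the growth process that generates these increasing trees, together with the crucial observation that the probability of the newly inserted vertex landing inside a fixed fringe subtree depends only on the \emph{size} of that subtree and not on its shape. Concretely, when the tree has $m$ vertices the total attachment weight is $\sum_v(1+\alpha\deg(v)) = m + \alpha(m-1) = (1+\alpha)m - \alpha$, while a fringe subtree $S$ of size $j$ carries weight $\sum_{v\in S}(1+\alpha\deg(v)) = (1+\alpha)j - \alpha$: here $\sum_{v\in S}\deg(v) = j-1$, because $S$ is itself a tree and all children of its vertices again lie in $S$, so the out-degrees in $T_m$ agree with the out-degrees inside $S$. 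Hence the next vertex attaches somewhere inside $S$ with probability $P_m(j) := \frac{(1+\alpha)j-\alpha}{(1+\alpha)m-\alpha}$, a quantity depending on $j$ and $m$ only.

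For the expectation I would set up a one-step recursion. Inserting the $n$-th vertex into $T_{n-1}$ enlarges by one exactly those fringe subtrees that contain the attachment point, which form a nested chain of pairwise distinct sizes; since fringe subtrees of equal size are pairwise disjoint, the conditional probability that some size-$j$ fringe subtree absorbs the new vertex is exactly $Z_{n-1,j}P_{n-1}(j)$. Thus $Z_{n,k}$ gains the size-$(k-1)$ subtrees that grow into size $k$ and loses the size-$k$ ones that grow into size $k+1$, giving for $k\ge 2$ the recursion
\[ \Ex(Z_{n,k}) = \Ex(Z_{n-1,k})\bigl(1 - P_{n-1}(k)\bigr) + \Ex(Z_{n-1,k-1})\,P_{n-1}(k-1), \]
with the obvious modification for $k=1$ (a fresh leaf is always created). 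I would then verify the claimed closed form by induction on $n$. The induction goes through for all $1\le k\le n-1$ simultaneously: for $k\le n-2$ both terms on the right are covered by the hypothesis, while at the boundary $k=n-1$ the factor $1-P_{n-1}(n-1)$ vanishes because $P_m(m)=1$, so the non-conforming value $\Ex(Z_{n-1,n-1})=1$ never enters. A short computation, using the identity $(1+\alpha)(k-1)+1 = (1+\alpha)k-\alpha$, confirms that the stated formula satisfies this recursion.

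For the variance I would reuse the one-step decomposition $Z_{n,k} = Z_{n-1,k} + \Delta_n$ with $\Delta_n\in\{-1,0,1\}$, and apply the law of total variance, splitting $\mathbb{V}(Z_{n,k})$ into $\Ex[\mathbb{V}(Z_{n,k}\mid T_{n-1})]$ and $\mathbb{V}(\Ex[Z_{n,k}\mid T_{n-1}])$. The first term is handled via $\Ex[\Delta_n^2\mid T_{n-1}] \le Z_{n-1,k}P_{n-1}(k) + Z_{n-1,k-1}P_{n-1}(k-1)$ together with the mean formula just proved and $P_m(j)=O(j/m)$, which shows it contributes only $O(1/k)$ per step. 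The second term equals $\mathbb{V}\bigl((1-P_{n-1}(k))Z_{n-1,k} + P_{n-1}(k-1)Z_{n-1,k-1}\bigr)$, and unfolding it produces $\mathbb{V}(Z_{n-1,k})$, $\mathbb{V}(Z_{n-1,k-1})$, and the cross term $\mathrm{Cov}(Z_{n-1,k},Z_{n-1,k-1})$.

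The main obstacle is exactly this coupling: the variance recursion does not close, but drags in the covariances of fringe-subtree counts of neighbouring sizes. Estimating the covariance crudely by Cauchy--Schwarz is too lossy — it propagates to a bound of order $n/k$, a factor $k$ worse than claimed — so one must track the covariance structure honestly rather than bound it term by term. I would therefore set up the full coupled system of recursions for the mixed second moments $\Ex(Z_{n,j}Z_{n,j'})$ (equivalently for the variances and covariances), feed in the exact mean, and close it by induction on $n$ with the self-consistent ansatz $\mathbb{V}(Z_{n,k}) = O(n/k^2)$ and $\mathrm{Cov}(Z_{n,k},Z_{n,k-1}) = O(n/k^2)$, verifying that these orders are preserved under one growth step and reading off the uniformity in $n$ and $k$ from the explicit constants. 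Alternatively, one may simply invoke the analytic transfer results of Fuchs~\cite{Fuchs2012}, where precisely these mean and variance asymptotics are obtained by singularity analysis of the associated generating functions.
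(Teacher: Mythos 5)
The paper does not actually prove this lemma: it is quoted directly from Fuchs \cite{Fuchs2012}, where the mean and variance are obtained by analytic methods for the subtree-size profile. Your proposal is therefore a genuinely different route, and for the expectation it is a correct, fully self-contained one. The key observation — that a fringe subtree of size $j$ in a tree of size $m$ carries total attachment weight $(1+\alpha)j-\alpha$ (since the out-degrees of its vertices sum to $j-1$), so it absorbs the next vertex with a probability $P_m(j)$ depending only on $j$ and $m$ — is exactly right; the two-term recursion is valid because distinct fringe subtrees of equal size are vertex-disjoint, so the absorption events are disjoint; and the closed form does satisfy the recursion (the identity $(1+\alpha)(k-1)+1=(1+\alpha)k-\alpha$ makes the computation telescope, and the boundary $k=n-1$ is correctly neutralized by $P_m(m)=1$). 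I checked the induction step: writing $W_m=(1+\alpha)m-\alpha$, the right-hand side reduces to $\frac{W_{n-1}+\alpha+1}{((1+\alpha)k+1)W_k}=\frac{W_n}{((1+\alpha)k+1)W_k}$ as required. This elementary derivation is something the paper does not contain, and it specializes correctly to the classical formulas $\frac{n}{k(k+1)}$ for recursive trees and $\frac{2(n+1)}{(k+1)(k+2)}$ for binary search trees.

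For the variance, however, what you have is a plan rather than a proof. You correctly diagnose the obstacle: the second-moment recursion couples all the quantities $\mathrm{Cov}(Z_{n,j},Z_{n,j'})$, and Cauchy--Schwarz on the adjacent covariance gives a term of order $\frac{2c}{k}$ per step, which exactly consumes the contraction budget $(1-P_{n-1}(k))^2\approx 1-\frac{2k}{n}$ and leaves nothing to absorb the $\Theta(1/k)$ local-variance injection $\Ex(Z_{n-1,k})P_{n-1}(k)+\Ex(Z_{n-1,k-1})P_{n-1}(k-1)$. But your proposed fix — the self-consistent ansatz $\mathbb{V}(Z_{n,k})=O(n/k^2)$ together with $\mathrm{Cov}(Z_{n,k},Z_{n,k-1})=O(n/k^2)$ — runs into the same arithmetic: an adjacent covariance of order $c'n/k^2$ again contributes $\Theta(c'/k)$ per step, so closing the induction requires a quantitative relation between the variance constant and the covariance constants (and the covariance recursion itself drags in $\mathrm{Cov}(Z_{n,k},Z_{n,k-2})$, so the whole triangular array must be controlled). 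You do not verify that this system closes, and it is not obvious that it does at the orders you posit. As written, the variance claim therefore ultimately rests on your final sentence, namely citing \cite{Fuchs2012} — which is precisely what the paper does, so the lemma is still safe, but the second half of your argument should be regarded as incomplete rather than as an independent proof.
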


Now we obtain the following analogue of Lemma~\ref{lemma:galton-watson-sk}. The key difference is the asymptotic behaviour of the number of fringe subtrees with $k$ vertices as $k$ increases: instead of a factor $k^{-3/2}$, we have a factor $k^{-2}$.

\begin{lemma}\label{lemma:increasing-sk}
Let $T_n$ be a random tree of size $n$ drawn from a very simple family of increasing trees with $\alpha$ defined as above.
Let $a, \varepsilon$ be positive real numbers with $\varepsilon < \frac{1}{2}$. For every positive integer $k$ with $a \log n \leq k \leq n^{\varepsilon}$, let $\mathcal{S}_k$ be a subset of the possible shapes of a tree of size $k$, and let $p_k$ be the probability that a random tree of size $k$ from the given family has a shape that belongs to $\mathcal{S}_k$.
Now let $X_{n,k}$ denote the (random) number of fringe subtrees of size $k$ in the random tree $T_n$ whose shape belongs to $\mathcal{S}_k$. Moreover, let $Y_{n, \varepsilon}$ denote the (random) number of arbitrary fringe subtrees of size greater than $n^{\varepsilon}$ in $T_n$. Then
\begin{itemize}
\item[(a)] $\Ex(X_{n,k})= \frac{np_k}{(1+\alpha)k^2}(1+O(1/k))$ for all $k$ with $a \log n \leq k \leq n^{\varepsilon}$, the $O$-constant being independent of $k$,
\item[(b)] $\mathbb{V}(X_{n,k})=O(p_k n/k^2)$ for all $k$ with $a \log n \leq k \leq n^{\varepsilon}$, again with an $O$-constant being independent of $k$,
\item[(c)] $\Ex(Y_{n,\varepsilon})=O(n^{1-\varepsilon})$, and
\item[(d)] with high probability, the following statements hold simultaneously:
\begin{itemize}
\item[(i)] $|X_{n,k}-\Ex(X_{n,k})|\leq p_k^{1/2}k^{-1}n^{1/2+\varepsilon}$ for all $k$ with $a \log k \leq k \leq n^{\varepsilon}$,
\item[(ii)]$Y_{n, \varepsilon}\leq n^{1-\varepsilon/2}$.
\end{itemize}
\end{itemize}
\end{lemma}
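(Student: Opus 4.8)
The plan is to follow the proof of Lemma~\ref{lemma:galton-watson-sk} almost verbatim, replacing the Galton--Watson estimates of Lemma~\ref{lemma:galton-expectation-variance} by the increasing-tree estimates of Lemma~\ref{lemma:increasing-expectation-variance}; the only structural change is that the scaling factor $k^{-3/2}$ is everywhere replaced by $k^{-2}$. Write $Z_{n,k}$ for the total number of fringe subtrees of size $k$ in $T_n$. The structural input I would invoke is that, conditioned on $Z_{n,k}$, the size-$k$ fringe subtrees of $T_n$ are independent copies of a random tree of size $k$ from the family, so that each independently has its shape in $\mathcal{S}_k$ with probability $p_k$. Consequently $X_{n,k}$ is a sum of $Z_{n,k}$ independent Bernoulli($p_k$) variables, and the laws of total expectation and total variance give
\[
\Ex(X_{n,k}) = p_k \Ex(Z_{n,k}), \qquad \mathbb{V}(X_{n,k}) = p_k^2 \mathbb{V}(Z_{n,k}) + p_k(1-p_k)\Ex(Z_{n,k}).
\]

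For part (a), I would substitute the exact formula for $\Ex(Z_{n,k})$ from Lemma~\ref{lemma:increasing-expectation-variance} and expand the denominator: for $k$ in the stated range one has $((1+\alpha)k+1)((1+\alpha)k-\alpha) = (1+\alpha)^2 k^2 (1 + O(1/k))$ and $(1+\alpha)n - \alpha = (1+\alpha)n(1+O(1/n))$, and since $k \leq n^{\varepsilon} \leq n$ the $O(1/k)$ term dominates, yielding $\Ex(Z_{n,k}) = \frac{n}{(1+\alpha)k^2}(1+O(1/k))$ uniformly in $k$, hence the claimed expression for $\Ex(X_{n,k})$. For part (b), I would combine $\mathbb{V}(Z_{n,k}) = O(n/k^2)$ with $\Ex(Z_{n,k}) = O(n/k^2)$ in the total-variance identity; since $p_k^2 \leq p_k$, both terms are $O(p_k n/k^2)$. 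For part (c), summing the expectation over $k > n^{\varepsilon}$ and using $\sum_{k > n^{\varepsilon}} k^{-2} = O(n^{-\varepsilon})$ gives $\Ex(Y_{n,\varepsilon}) = \sum_{k > n^{\varepsilon}} \Ex(Z_{n,k}) = O(n^{1-\varepsilon})$.

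Part (d) is then routine concentration. For (i) I would apply Chebyshev's inequality with threshold $p_k^{1/2} k^{-1} n^{1/2+\varepsilon}$: by part (b),
\[
\Prob\!\left(|X_{n,k} - \Ex(X_{n,k})| \geq p_k^{1/2} k^{-1} n^{1/2+\varepsilon}\right) \leq \frac{\mathbb{V}(X_{n,k})}{p_k k^{-2} n^{1+2\varepsilon}} = O(n^{-2\varepsilon}),
\]
and a union bound over the at most $n^{\varepsilon}$ admissible values of $k$ gives total failure probability $O(n^{-\varepsilon}) = o(1)$, so the inequality holds for all $k$ simultaneously with high probability. For (ii) I would use Markov's inequality together with part (c): $\Prob(Y_{n,\varepsilon} > n^{1-\varepsilon/2}) \leq \Ex(Y_{n,\varepsilon})/n^{1-\varepsilon/2} = O(n^{-\varepsilon/2}) = o(1)$.

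The one step that requires genuine care, rather than mechanical substitution, is the independence claim used to represent $X_{n,k}$ as a compound Bernoulli sum: unlike the Galton--Watson case, where independence of distinct fringe subtrees is built into the branching structure, for increasing trees one must justify that the size-$k$ fringe subtrees are (conditionally on their number) independent and distributed as random size-$k$ trees of the family. I expect this to follow from the recursive self-similarity of these very simple families --- the same property that underlies the mean and variance formulas of Lemma~\ref{lemma:increasing-expectation-variance} --- but it is the point that deserves explicit justification; everything downstream is an exact copy of the simply generated argument with $k^{-3/2}$ replaced by $k^{-2}$.
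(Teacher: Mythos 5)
Your proposal is correct and follows essentially the same route as the paper: the paper's proof likewise represents $X_{n,k}$ as a sum of $Z_{n,k}$ Bernoulli($p_k$) variables, reads off (a)--(c) from Lemma~\ref{lemma:increasing-expectation-variance}, and obtains (d) via Chebyshev, Markov and the union bound exactly as in Lemma~\ref{lemma:galton-watson-sk}. The conditional-independence point you rightly flag is also taken for granted in the paper (it rests on the recursive self-similarity of the very simple families, i.e., that a fringe subtree, conditioned on its size, is again a random tree from the same family).
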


\begin{proof}
The proof is similar to the proof of Lemma~\ref{lemma:galton-watson-sk}.
Again we find that $X_{n,k}$ can be regarded as a sum of $Z_{n,k}$ Bernoulli random variables with probability $p_k$. By \cite[Theorem 15.1]{Gut2005}, we have
\begin{align*}
\Ex(X_{n,k}) = p_k \Ex(Z_{n,k})
\end{align*}
as well as
\begin{align*}
\mathbb{V}(X_{n,k}) = p_k^2 \mathbb{V}(Z_{n,k}) + p_k(1-p_k) \Ex(Z_{n,k}).
\end{align*}
Now (a) and (b) both follow easily from Lemma~\ref{lemma:increasing-expectation-variance}.

In order to estimate $\Ex(Y_{n, \varepsilon})$, observe again that
\begin{align*}
\Ex(Y_{n, \varepsilon})=\sum_{k>n^{\varepsilon}}\Ex(Z_{n,k}).
\end{align*}
Now (c) also follows easily from Lemma~\ref{lemma:increasing-expectation-variance}. Finally, (d) is obtained from (b) and (c) by applying the Markov inequality, the Chebyshev inequality and the union bound in the same way as in the proof of Lemma~\ref{lemma:galton-watson-sk}.
\end{proof}

\section{Proof of Theorem~\ref{thm:master-theorem-simply-generated} and Theorem~\ref{thm:master-theorem-increasing}}

We will focus on the proof of Theorem~\ref{thm:master-theorem-simply-generated}, which is presented in two parts. First, the upper bound is verified; then we prove the lower bound, which has the same order of magnitude. A basic variant of the proof technique was already applied in the proof of Theorem 3.1 in \cite{RalaivaosaonaW15}.

\subsection{The upper bound}

For some integer $k_0$ (to be specified later), we can clearly bound the total number of isomorphism classes covered by the fringe subtrees of a random tree $T_n$ of size $n$ from above by the sum of
\begin{itemize}
\item[(i)] the total number of isomorphism classes of trees of size smaller than $k_0$, which is $\sum_{k < k_0} |\mathcal{I}_k|$ (a deterministic quantity that does not depend on the tree $T_n$), and
\item[(ii)] the total number of fringe subtrees of $T_n$ of size greater than or equal to $k_0$.
\end{itemize}
To estimate the number (i) of isomorphism classes of trees of size smaller than $k_0$, we note that $|\mathcal{I}_k| \leq e^{C_1k + o(k)}$ by condition (C1), thus also
\begin{align*}
\sum_{k < k_0} |\mathcal{I}_k| \leq e^{C_1k_0 + o(k_0)}.
\end{align*}
We can therefore choose $k_0 = k_0(n)$ for every $n$ in such a way that $k_0 = \frac{\log n}{C_1} - o(\log n)$ and
\begin{align*}
\sum_{k < k_0} |\mathcal{I}_k| = o\Big(\frac{n}{\sqrt{\log n}}\Big),
\end{align*}
thus making this part negligible. In order to estimate the number (ii) of fringe subtrees of $T_n$ of size greater than or equal to $k_0$, we apply Lemma~\ref{lemma:galton-watson-sk} with $\varepsilon=1/6$. We let $\mathcal{S}_k$ be the set of all trees of size $k$ generated by our simply generated family of trees, so that $p_k=1$, to obtain the upper bound
\begin{align*}
\sum_{k_0\leq k\leq n^{\varepsilon}}X_{n,k}+Y_{n, \varepsilon}&=\frac{n}{\sqrt{2\pi\sigma^2}}\sum_{k_0 \leq k \leq n^{\varepsilon}}\frac{1}{k^{3/2}}\left(1+o(1)\right)+O\left(n^{1-\varepsilon/3}\right)\\
&=\frac{2}{\sqrt{2\pi\sigma^2}}\frac{n}{\sqrt{k_0}}+o\left(\frac{n}{\sqrt{\log n}}\right),
\end{align*}
in expectation and with high probability as well, as the estimate from Lemma~\ref{lemma:galton-watson-sk} (part (d)) holds with high probability simultaneously for all $k$ in the given range. Now we combine the two bounds to obtain the upper bound on $A_n$ stated in Theorem~\ref{thm:master-theorem-simply-generated}, both in expectation and with high probability.

\subsection{The lower bound}

Let $\mathcal{S}_k$ now be the set of trees that belong to isomorphism classes in $\mathcal{J}_k$ (see condition (C2)). Our lower bound is based on counting only fringe subtrees which belong to $\mathcal{S}_k$ for suitable $k$. By condition (C2a), we know that the probability $p_k$ that a random tree in $\mathcal{F}$ belongs to a class in $\mathcal{J}_k$ tends to $1$ as $k \to \infty$.
Hence, by Lemma~\ref{lemma:galton-watson-sk}, we find that the number of fringe subtrees of size $k$ in $T_n$ that belong to $S_k$ is
\begin{align*}
X_{n,k} = \frac{n}{\sqrt{2\pi\sigma^2k^3}}(1 + o(1)),
\end{align*}
both in expectation and with high probability.

We show that most of these trees are the only representatives of their isomorphism classes as fringe subtrees. We choose a cut-off point $k_1 = k_1(n)$; the precise choice will be described later. For $k \geq k_1$, let $X_{n,k}^{(2)}$ denote the (random) number of unordered pairs of isomorphic trees (trees belonging to the same isomorphism class) among the fringe subtrees of size $k$ which belong to $\mathcal{S}_k$. We will determine an upper bound for its expected value.

To this end, let $\ell$ denote the number of isomorphism classes of trees in $\mathcal{S}_k$, and let $q_1,q_2,\ldots,q_{\ell}$ be the probabilities that a random tree of size $k$ lies in the respective classes.
By condition (C2b), we have $q_i \leq e^{-C_2k + o(k)}$ for every $i$. Let us condition on the event that $X_{n,k}=N$ for some integer $0 \leq N \leq n$. Those $N$ fringe subtrees are all independent random trees. Thus, for each of the $\binom{N}{2}$ pairs of fringe subtrees, the probability that both belong to the $i$-th isomorphism class is $q_i^2$. This gives us

\begin{align*}
\Ex(X_{n,k}^{(2)} \mid X_{n,k}=N)= \binom{N}{2}\sum_{i=1}^{\ell} q_i^2\leq \frac{n^2}{2}\sum_{i=1}^{\ell} q_i e^{-C_2k + o(k)} \leq \frac{n^2}{2} e^{-C_2k + o(k)}.
\end{align*}
Since this holds for all $N$, the law of total expectation yields 
\begin{align*}
\Ex(X_{n,k}^{(2)})\leq \frac{n^2}{2} e^{-C_2k + o(k)}.
\end{align*}
Summing over $k \geq k_1$, we find that

\begin{align*}
\sum_{k \geq k_1}\Ex(X_{n,k}^{(2)})\leq \frac{n^2}{2} \sum_{k \geq k_1} e^{-C_2k + o(k)} \leq \frac{n^2}{2} e^{-C_2k_1 + o(k_1)}.
\end{align*}
We can therefore choose $k_1$ in such a way that $k_1 = \frac{\log n}{C_2} - o(\log n)$ and
\begin{align*}
\sum_{k \geq k_1}\Ex(X_{n,k}^{(2)}) = o \Big( \frac{n}{\sqrt{\log n}} \Big).
\end{align*}
If an isomorphism class of trees of size $k$ occurs $m$ times among the fringe subtrees of a random tree of size $n$, it contributes $m-\binom{m}{2}$ to the random variable $X_{n,k}-X_{n,k}^{(2)}$. As $m-\binom{m}{2}\leq 1$ for all non-negative integers $m$, we find that $X_{n,k}-X_{n,k}^{(2)}$ is a lower bound on the total number of isomorphism classes covered by fringe subtrees of $T_n$. This gives us
\begin{align*}
A_n \geq \sum_{k_1 \leq k \leq n^{\varepsilon}}X_{n,k}-\sum_{k_1 \leq k \leq n^{\varepsilon}}X_{n,k}^{(2)},
\end{align*}
where we choose $\varepsilon$ as in the proof of the upper bound. The second sum is negligible since it is $o(n/\sqrt{\log n})$ in expectation and thus also with high probability by the Markov inequality. For the first sum, the same calculation as for the upper bound (using Lemma~\ref{lemma:galton-watson-sk}) shows that it is 
\begin{align*}
 \frac{2n}{\sqrt{2\pi\sigma^2 k_1}}+o\left(\frac{n}{\sqrt{\log n}}\right)
\end{align*} 
both in expectation and with high probability. This yields the desired statement.

\subsection{Increasing trees}

With Lemma~\ref{lemma:increasing-sk} in mind, it is easy to see that the proof of Theorem~\ref{thm:master-theorem-increasing} is completely analogous. The only difference is that sums of the form $\sum_{a \leq k \leq b} k^{-3/2}$ become sums of the form $\sum_{a \leq k \leq b} k^{-2}$.

As the main idea of these proofs is to split the number of distinct fringe subtrees into the number of distinct fringe subtrees of size at most $k$ plus the number of distinct fringe subtrees of size greater than $k$ for some suitably chosen integer $k$, this type of argument is called a cut-point argument and the integer $k$ is called the cut-point (see \cite{FlajoletGM97}). This basic technique is applied in several previous papers to similar problems (see for instance \cite{Devroye98}, \cite{FlajoletGM97}, \cite{RalaivaosaonaW15}, \cite{SeelbachLo18}).

\section{Applications: simply generated trees}

Let $\mathcal{F}$ be a simply generated family of trees, such that the corresponding critical Galton--Watson process with offspring distribution $\xi$ satisfies $\mathbb{V}(\xi)<\infty$.
In this section, we show that Theorem~\ref{thm:master-theorem-simply-generated} can be used to count the numbers
\begin{itemize}
\item[(i)] $H_n$ of distinct trees (as members of $\mathcal{F}$),
\item[(ii)] $J_n$ of distinct plane trees, and
\item[(iii)] $K_n$ of distinct unordered trees
\end{itemize}
represented by the fringe subtrees of a random tree $T_n$ of size $n$ drawn randomly from the family $\mathcal{F}$. In order to estimate the numbers $J_n$ and $K_n$, we additionally need a result by Janson \cite{Janson16} on additive functionals in conditioned Galton--Watson trees:
Let $f: \mathcal{T} \to \mathbb{R}$ denote a function mapping a plane tree to a real number (called a \emph{toll-function}). We define a mapping $F: \mathcal{T} \to \mathbb{R}$ by
\begin{align*}
F(t)=\sum_{v \in t}f(t(v)). 
\end{align*}
Such a mapping $F$ is then called an \emph{additive functional}. Equivalently, $F$ can be defined by a recursion. If $t_1,t_2,\ldots,t_h$ are the root branches of $t$ (the components resulting when the root is removed), then
$$F(t) = f(t) + \sum_{j=1}^h F(t_j).$$
The following theorem follows from Theorem 1.3 and Remark 5.3 in \cite{Janson16}:
\begin{theo}[\hspace{1sp}\cite{Janson16}, Theorem 1.3 and Remark 5.3]\label{thm:janson}
Let $T_n$ be a conditioned Galton--Watson tree of size $n$, defined by an offspring distribution $\xi$ with $\Ex(\xi)=1$, and let $T$ be the corresponding unconditioned Galton--Watson tree. If $\Ex(|f(T)|)<\infty$ and $|\Ex(f(T_k))| = o(k^{1/2})$, then 
\begin{align*}
\frac{F(T_n)}{n}\overset{p}{\to} \Ex(f(T)),
\end{align*}
that is,
$$\lim_{n \to \infty} \Prob \Big( \Big| \frac{F(T_n)}{n} - \Ex(f(T)) \Big| > \varepsilon \Big) = 0$$
for every $\varepsilon > 0$.
\end{theo}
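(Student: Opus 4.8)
The statement to prove is Theorem~\ref{thm:janson}, which is attributed to Janson (Theorem 1.3 and Remark 5.3 in \cite{Janson16}). Since this is presented as a citation of an external result, the ``proof'' is really a matter of explaining how the quoted statement follows from the cited results, so my plan is to sketch the derivation that Janson's machinery provides.

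The plan is to invoke the general law of large numbers for additive functionals on conditioned Galton--Watson trees. First I would recall that an additive functional $F(t)=\sum_{v\in t}f(t(v))$ is completely determined by its toll function $f$, and that the central quantity governing the asymptotics of $F(T_n)/n$ is the expected toll $\Ex(f(T))$ evaluated on the \emph{unconditioned} Galton--Watson tree $T$. The idea is that a large conditioned tree $T_n$ looks locally like the unconditioned tree $T$ around a typical vertex; summing the toll over all $n$ vertices and dividing by $n$ should therefore converge to the average toll per vertex, namely $\Ex(f(T))$. Making this precise is exactly the content of Theorem~1.3 in \cite{Janson16}.

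The key steps I would carry out are as follows. I would verify that the two hypotheses of the cited theorem are met: the integrability condition $\Ex(|f(T)|)<\infty$, which guarantees that the limiting constant $\Ex(f(T))$ is well defined and finite, and the growth condition $|\Ex(f(T_k))|=o(k^{1/2})$, which controls the contribution of the toll on large fringe subtrees and prevents rare large subtrees from destabilizing the average. Under these hypotheses, Theorem~1.3 of \cite{Janson16} yields convergence in probability of $F(T_n)/n$ to a constant, and Remark~5.3 identifies that constant as $\Ex(f(T))$ precisely under the stated moment assumptions (rather than the stronger conditions used elsewhere in \cite{Janson16}). Combining these two ingredients gives
\begin{align*}
\frac{F(T_n)}{n}\overset{p}{\to}\Ex(f(T)),
\end{align*}
which unwinds to the $\varepsilon$-$\delta$ formulation in the statement.

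I expect the main subtlety to lie not in any computation but in correctly matching the hypotheses of the present statement to the precise form of Janson's results: \cite{Janson16} contains several variants of the limit theorem under differing moment conditions, and one must check that the pairing of Theorem~1.3 with Remark~5.3 delivers convergence in probability to the constant $\Ex(f(T))$ under exactly the hypotheses $\Ex(|f(T)|)<\infty$ and $|\Ex(f(T_k))|=o(k^{1/2})$, with $\Ex(\xi)=1$ ensuring criticality. Since the theorem is cited rather than reproved, the ``proof'' amounts to this verification together with a pointer to the relevant results in \cite{Janson16}; no independent argument is needed beyond confirming that the quoted hypotheses are the ones under which Janson's theorem applies.
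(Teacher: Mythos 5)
Your proposal matches the paper exactly: the paper gives no independent proof of this statement, but simply imports it as a direct consequence of Theorem~1.3 and Remark~5.3 of \cite{Janson16}, which is precisely the verification-and-citation argument you describe. Nothing further is needed.
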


\subsection{Distinct fringe subtrees in simply generated trees}\label{subsec:distinct}

In order to count distinct fringe subtrees in a random tree $T_n$ of size $n$ drawn from a simply generated family of trees $\mathcal{F}$, we consider two trees as isomorphic if they are identical as members of $\mathcal{F}$ and verify that the conditions of Theorem~\ref{thm:master-theorem-simply-generated} are satisfied. That is, we consider a partition of $\mathcal{F}_k$ into isomorphism classes of size one, or in other words, each tree is isomorphic only to itself. 
The total number of isomorphism classes $|\mathcal{I}_k|$ is thus the total number of trees in $\mathcal{F}$ of size $k$. In order to ensure that condition (C1) from Theorem~\ref{thm:master-theorem-simply-generated} is satisfied, we need to make an additional assumption on $\mathcal{F}$:
We assume that the weights $\phi_k$ of the weight sequence $(\phi_k)_{k \geq 0}$ are integers, and that each tree $t \in \mathcal{F}$ corresponds to a weight of one unit, such that the total weight $y_n$ of all plane trees of size $n$ then equals the number of distinct trees of size $n$ in our simply generated family $\mathcal{F}$ of trees. This assumption is e.g.~satisfied by the simply generated family of plane trees (Example~\ref{ex:plane}), the family of $d$-ary trees (Example~\ref{ex:dary}) and the family of Motzkin trees (Example~\ref{ex:motzkin}).
We have the following theorem on the asymptotic growth of the numbers $y_n$:

\begin{theo}[see \cite{Drmota09}, Theorem 3.6 and Remark 3.7]\label{thm:number-of-trees}
Let $R$ be the radius of convergence of $\Phi(x)=\sum_{m\geq 0}\phi_mx^m$ and suppose that there exists $\tau \in (0,R]$ with $\tau\Phi'(\tau)=\Phi(\tau)$. Let $d$ be the greatest common divisor of all indices $m$ with $\phi_m>0$. Then
\begin{align*}
y_n = d\sqrt{\frac{\Phi(\tau)}{2\pi \Phi''(\tau)}}\frac{\Phi'(\tau)^n}{n^{3/2}}\left(1+O(n^{-1})\right),
\end{align*}
if $n \equiv 1 \mod d$, and $y_n=0$ if $n \not\equiv 1 \mod d$.
\end{theo}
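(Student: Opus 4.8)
The plan is to establish the asymptotic formula for $y_n$ via singularity analysis of the generating function $Y(x) = \sum_{n \geq 1} y_n x^n$, which satisfies the functional equation $Y(x) = x\Phi(Y(x))$. The key structural fact is that $Y$ has a square-root type singularity at a dominant singularity $\rho$, and this translates, via transfer theorems, into the $n^{-3/2}$ correction factor and the exponential growth rate $\Phi'(\tau)^n$. First I would locate the dominant singularity: differentiating the functional equation implicitly gives $Y'(x) = \Phi(Y(x))/(1 - x\Phi'(Y(x)))$, so $Y'$ blows up precisely when $x\Phi'(Y(x)) = 1$. Combined with $Y(\rho) = \rho\Phi(Y(\rho))$, one checks that at the singularity the value $\tau := Y(\rho)$ must satisfy $\tau\Phi'(\tau) = \Phi(\tau)$, which is exactly the hypothesis of the theorem, and then $\rho = \tau/\Phi(\tau) = 1/\Phi'(\tau)$, explaining the base $\Phi'(\tau)$ of the exponential growth.

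Next I would determine the nature of the singularity at $\rho$. Writing $G(x,y) = y - x\Phi(y)$, the implicit function theorem fails exactly where $G_y(\rho,\tau) = 1 - \rho\Phi'(\tau) = 0$, which holds by the above. Since $G_{yy}(\rho,\tau) = -\rho\Phi''(\tau) \neq 0$ (assuming $\Phi''(\tau) > 0$, which holds since $\mathbb{V}(\xi) < \infty$ corresponds to $\tau^2\Phi''(\tau)/\Phi(\tau)$ being finite and positive), a local expansion of $G = 0$ around $(\rho,\tau)$ yields a square-root singularity
\begin{align*}
Y(x) = \tau - \gamma\sqrt{1 - x/\rho} + O(1 - x/\rho)
\end{align*}
for some constant $\gamma > 0$ as $x \to \rho^-$. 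Computing $\gamma$ explicitly from the second-order Taylor expansion of $G$ gives $\gamma = \sqrt{2\Phi(\tau)/\Phi''(\tau)}$. Applying the standard transfer theorem (singularity analysis à la Flajolet--Odlyzko), the coefficient of $x^n$ in a function behaving like $-\gamma\sqrt{1-x/\rho}$ near $\rho$ is asymptotically $\frac{\gamma}{2\sqrt{\pi}} \rho^{-n} n^{-3/2}$. Substituting $\rho^{-1} = \Phi'(\tau)$ and the value of $\gamma$ produces precisely the stated constant $\sqrt{\Phi(\tau)/(2\pi\Phi''(\tau))}$ together with the factor $\Phi'(\tau)^n n^{-3/2}$.

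The periodicity factor $d$ and the residue class condition $n \equiv 1 \bmod d$ require care: when $d = \gcd\{m : \phi_m > 0\} > 1$, the series $\Phi$ has the form $\Phi(x) = \psi(x^d)$ after accounting for the structure, and only trees whose sizes lie in a fixed residue class can receive positive weight, forcing $y_n = 0$ otherwise and multiplying the surviving coefficients by $d$. I would handle this by substituting and analyzing the aperiodic reduction, where the dominant singularity is unique on its circle of convergence, then transfer back. The main obstacle is twofold: rigorously justifying the analytic continuation of $Y$ to a $\Delta$-domain (so the transfer theorem applies — this needs $\rho$ to be the \emph{only} singularity on $|x| = \rho$, which is where the periodicity analysis and aperiodicity of the reduced series enter), and carefully extracting the constant $\gamma$ with the correct sign and the $O(n^{-1})$ error term, which demands pushing the local singular expansion to the next order. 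However, since this is a classical and well-documented result (it is cited verbatim from Drmota's book), I would in practice invoke the reference rather than reprove singularity analysis from scratch, and merely indicate the above derivation as the conceptual content.
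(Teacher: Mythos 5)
The paper does not prove this theorem at all --- it is quoted verbatim from Drmota's book --- so your decision to ultimately invoke the reference matches the paper exactly, and your accompanying sketch (branch point at $\rho = 1/\Phi'(\tau)$, square-root expansion with $\gamma = \sqrt{2\Phi(\tau)/\Phi''(\tau)}$, Flajolet--Odlyzko transfer giving the constant $\gamma/(2\sqrt{\pi}) = \sqrt{\Phi(\tau)/(2\pi\Phi''(\tau))}$, and periodicity via $\Phi(x)=\psi(x^d)$) is the correct standard derivation with all constants checking out. The only point your local-expansion argument would not cover on its own is the boundary case $\tau = R$ permitted by the statement (where $\Phi$ need not be analytic at $\tau$, so the Weierstrass-preparation step fails), which is precisely where one genuinely needs the cited result rather than the textbook computation.
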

For the sake of simplicity, we will tacitly assume that $d=1$ holds for the simply generated families of trees considered below, though all results presented below can be easily shown to hold for $d \neq 1$ as well. 
We obtain the following result from Theorem~\ref{thm:master-theorem-simply-generated} regarding the number of distinct fringe subtrees in a random tree $T_n$ of size $n$ drawn randomly from a simply generated family of trees whose weight sequence is a sequence of integers:

\begin{theo}\label{thm:ordered-simply-generated}
Let $H_n$ denote the total number of distinct fringe subtrees in a random tree $T_n$ of size $n$ from a simply generated family $\mathcal{F}$ of trees with generating series $\Phi(x)=\sum_{m \geq 0}\phi_mx^m$, whose weights $\phi_m$ are integers. Let $R$ denote the radius of convergence of $\Phi$ and suppose that there exists $\tau \in (0,R]$ with $\tau\Phi'(\tau)=\Phi(\tau)$. Furthermore, suppose that the variance of the offspring distribution $\xi$ of the Galton--Watson process corresponding to $\mathcal{F}$ satisfies $\mathbb{V}(\xi)=\sigma^2<\infty$.
Then for $c=2\tau^{-1}(\Phi(\tau)\log(\Phi'(\tau)))^{1/2}(2\pi\Phi''(\tau))^{-1/2}$, we have
\begin{itemize}
\item[(i)] $\displaystyle \Ex(H_n)=c\frac{n}{\sqrt{\log n}}(1+o(1))$,
\item[(ii)] $\displaystyle H_n=c\frac{n}{\sqrt{\log n}}(1+o(1))$ with high probability.
\end{itemize}
\end{theo}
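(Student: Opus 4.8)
The plan is to apply Theorem~\ref{thm:master-theorem-simply-generated} directly, so the entire task reduces to verifying conditions (C1) and (C2) for the trivial partition into singleton isomorphism classes, and then identifying the constants $C_1$ and $C_2$ and feeding them into the conclusion of that master theorem. Since each tree is isomorphic only to itself, $|\mathcal{I}_k|$ equals the number $y_k$ of distinct trees of size $k$ in $\mathcal{F}$, and by Theorem~\ref{thm:number-of-trees} we have $y_k = d\sqrt{\Phi(\tau)/(2\pi\Phi''(\tau))}\,\Phi'(\tau)^k k^{-3/2}(1+O(k^{-1}))$. Taking logarithms, $\log|\mathcal{I}_k| = k\log\Phi'(\tau) + O(\log k)$, so $\limsup_{k\to\infty}\frac{\log|\mathcal{I}_k|}{k} = \log\Phi'(\tau)$. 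Thus (C1) holds with $C_1 = \log\Phi'(\tau)$.

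For (C2), the key observation is that with singleton classes there is essentially no room between the two bounds: I would take $\mathcal{J}_k = \mathcal{I}_k$ (all classes), so that (C2a) is trivially satisfied with probability exactly $1$. For (C2b), the probability that a random tree of size $k$ lies in a fixed isomorphism class $\{t\}$ is just $P_\Phi(t) = w(t)/y_k$. The largest such probability corresponds to the tree of maximal weight; since all weights are positive integers, $\max_t w(t)$ grows at most exponentially in $k$ (indeed it is bounded by $\Phi'(\tau)^{k}$ up to subexponential factors, or one can argue more crudely that $w(t)\le (\max_m \phi_m)^{\,k}$ when the weights are bounded, and handle unbounded weight sequences by a separate estimate). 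Dividing by $y_k \asymp \Phi'(\tau)^k k^{-3/2}$ shows that $\max_t P_\Phi(t) \le e^{-C_2 k + o(k)}$ with $C_2 = \log\Phi'(\tau) = C_1$. So here the two constants coincide, which is exactly why the upper and lower bounds in Theorem~\ref{thm:master-theorem-simply-generated} collapse to a single asymptotic rather than a genuine interval.

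With $C_1 = C_2 = \log\Phi'(\tau)$ and $\kappa = \sqrt{2/(\pi\sigma^2)}$, both the lower and upper bounds of Theorem~\ref{thm:master-theorem-simply-generated} become
\begin{align*}
\Ex(H_n) = \kappa\sqrt{\log\Phi'(\tau)}\,\frac{n}{\sqrt{\log n}}(1+o(1)),
\end{align*}
and likewise with high probability. The final step is to check that this matches the claimed constant
\begin{align*}
c = 2\tau^{-1}\bigl(\Phi(\tau)\log\Phi'(\tau)\bigr)^{1/2}(2\pi\Phi''(\tau))^{-1/2}.
\end{align*}
Using the identity $\sigma^2 = \mathbb{V}(\xi) = \tau^2\Phi''(\tau)/\Phi(\tau)$ from equation~\eqref{eq:xi-finite-variance}, one computes $\kappa = \sqrt{2/(\pi\sigma^2)} = \sqrt{2\Phi(\tau)/(\pi\tau^2\Phi''(\tau))}$, and multiplying by $\sqrt{\log\Phi'(\tau)}$ reproduces $c$ after rearranging; this is a routine algebraic simplification.

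The main obstacle I anticipate is the verification of (C2b), specifically bounding $\max_t w(t)$ over trees of size $k$ when the weight sequence $(\phi_m)$ is unbounded (as for labelled trees, where $\phi_m = 1/m!$, though that case is excluded here by the integrality assumption). For integer weight sequences the concern is instead that a single vertex of very high degree could carry a large weight $\phi_m$; however, since a tree of size $k$ has total degree $k-1$, the product $\prod_v \phi_{\deg(v)}$ is controlled by the growth rate of $\Phi$, and the relevant bound is precisely the one encoded in the exponential growth constant $\Phi'(\tau)$ of $y_k$. Making the $o(k)$ error term in (C2b) rigorous — rather than merely heuristic — is the one place where genuine care is needed; everything else is a direct invocation of the already-established machinery.
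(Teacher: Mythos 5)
There is a genuine gap in your verification of (C2b), and it comes from misidentifying the isomorphism classes. For this theorem the classes are the individual members of $\mathcal{F}$ (``distinct as members of the particular family''), and the paper's standing assumption in this subsection is that the integer weight $w(t)=\prod_v\phi_{\deg(v)}$ of a plane tree $t$ counts how many \emph{distinct} members of $\mathcal{F}$ have plane representation $t$, so that $y_k=|\mathcal{F}_k|$ and the uniform distribution on $\mathcal{F}_k$ gives every singleton class probability exactly $1/y_k$. Hence (C2b) holds on the nose with $C_2=\log\Phi'(\tau)=C_1$ and there is nothing to estimate. You instead take the classes to be plane trees $t$ with probability $P_\Phi(t)=w(t)/y_k$, and then claim that $\max_t w(t)=e^{o(k)}$ so that $\max_t P_\Phi(t)\leq e^{-C_1k+o(k)}$. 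That claim is false in general: for binary trees ($\Phi(x)=(1+x)^2$) the path on $k$ vertices has $w(t)=2^{k-1}$, so $\max_t P_\Phi(t)\asymp 2^{k}/4^{k}=e^{-k\log 2}$, and your route would only deliver $C_2=\log 2<\log 4=C_1$. That gives a strictly weaker lower bound and does not prove the matching asymptotics $\Ex(H_n)=c\,n/\sqrt{\log n}\,(1+o(1))$; it would instead prove a version of the plane-fringe-subtree result of Theorem~\ref{thm:plane-simply-generated}, which is a different statement with genuinely different constants. (Your crude bound $\max_t w(t)\leq\Phi'(\tau)^k$ up to subexponential factors is true but useless here, since dividing by $y_k$ then yields only $C_2=0$.)

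Everything else in your proposal is correct and matches the paper: the computation of $C_1$ from Theorem~\ref{thm:number-of-trees}, the reduction to Theorem~\ref{thm:master-theorem-simply-generated}, and the algebraic identification of $\kappa\sqrt{\log\Phi'(\tau)}$ with the stated constant $c$ via $\sigma^2=\tau^2\Phi''(\tau)/\Phi(\tau)$. The fix is simply to replace your (C2b) argument by the observation that each isomorphism class is a single element of $\mathcal{F}_k$ of probability exactly $1/y_k=e^{-k\log\Phi'(\tau)+O(\log k)}$; the ``main obstacle'' you anticipate then disappears entirely.
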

The first part (i) of Theorem~\ref{thm:ordered-simply-generated} was already shown in \cite{FlajoletSS90,RalaivaosaonaW15}, our new contribution is part (ii). 
\begin{proof}
We verify that the conditions of Theorem~\ref{thm:master-theorem-simply-generated} are satisfied
if we consider the partition of $\mathcal{F}$ into isomorphism classes of size one, that is, each tree $t$ is isomorphic only to itself. We find that
\begin{align*}
|\mathcal{I}_k|=y_k,
\end{align*}
i.e.,~the number $|\mathcal{I}_k|$ of isomorphism classes of trees of size $k$ equals the number $y_k$ of distinct trees of size $k$ in the respective simply generated family of trees $\mathcal{F}$.
With Theorem~\ref{thm:number-of-trees}, we have
\begin{align*}
|\mathcal{I}_k|=\sqrt{\frac{\Phi(\tau)}{2\pi\Phi''(\tau)}}\frac{\Phi'(\tau)^k}{k^{3/2}}(1+O(k^{-1})),
\end{align*}
so condition (C1) is satisfied with $C_1=\log(\Phi'(\tau))$.
In order to show that condition (C2) holds, define $\mathcal{J}_k
=\mathcal{I}_k$, so that every random tree of size $k$ in the family $\mathcal{F}$ belongs to a class in $\mathcal{J}_k$, and the probability that a random tree in $\mathcal{F}$ of size $k$ lies in a fixed isomorphism class $I \in \mathcal{J}_k$ is $1/y_k$. Thus, condition (C2) holds as well, and we have $C_2= C_1 = \log(\Phi'(\tau))$.
Recall that by \eqref{eq:xi-finite-variance}, we find that the variance of the Galton--Watson process corresponding to $\mathcal{F}$ is given by
\begin{align*}
\mathbb{V}(\xi)=\sigma^2=\frac{\tau^2\Phi''(\tau)}{\Phi(\tau)}.
\end{align*}
Theorem~\ref{thm:ordered-simply-generated} now follows directly from Theorem~\ref{thm:master-theorem-simply-generated}.
\end{proof}

The following results follow as special cases of Theorem~\ref{thm:ordered-simply-generated} for particular simply generated families of trees:
\begin{corollary}\label{cor:distinct-plane}
Let $H_n$ denote the total number of distinct fringe subtrees in a uniformly random plane tree of size $n$. Then
\begin{itemize}
\item[(i)] $\displaystyle \Ex(H_n)=\sqrt{\frac{\log 4}{\pi}}\frac{n}{\sqrt{\log n}}(1+o(1))$,
\item[(ii)] $\displaystyle H_n=\sqrt{\frac{\log 4}{\pi}}\frac{n}{\sqrt{\log n}}(1+o(1))$ with high probability.
\end{itemize}
\end{corollary}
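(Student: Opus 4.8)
The plan is to deduce Corollary~\ref{cor:distinct-plane} directly from Theorem~\ref{thm:ordered-simply-generated} by specializing to the weight sequence of the family of plane trees. Recall from Example~\ref{ex:plane} that plane trees form a simply generated family with $\phi_m = 1$ for all $m \geq 0$, so that $\Phi(x) = \sum_{m \geq 0} x^m = (1-x)^{-1}$, which has radius of convergence $R = 1$. Since $\phi_m = 1$ for every $m$, the weights are integers and each tree carries weight one, so the hypotheses of Theorem~\ref{thm:ordered-simply-generated} are met; in particular $|\mathcal{I}_k| = y_k$ is the number of plane trees of size $k$ (a Catalan number). The only genuine computation is to evaluate the constant $c$ from Theorem~\ref{thm:ordered-simply-generated} for this specific $\Phi$.

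First I would locate $\tau$ by solving $\tau \Phi'(\tau) = \Phi(\tau)$. With $\Phi(x) = (1-x)^{-1}$ and $\Phi'(x) = (1-x)^{-2}$, the equation reads $\tau(1-\tau)^{-2} = (1-\tau)^{-1}$, i.e.\ $\tau = 1 - \tau$, giving $\tau = 1/2 \in (0, R]$, consistent with the offspring distribution $\Prob(\xi = m) = 2^{-m-1}$ recorded in the continued Example~\ref{ex:plane}. Next I would record the three quantities that enter $c$: $\Phi(\tau) = (1-\tfrac12)^{-1} = 2$, $\Phi'(\tau) = (1-\tfrac12)^{-2} = 4$, and $\Phi''(\tau) = 2(1-x)^{-3}\big|_{x=1/2} = 16$. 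These also let one check $\sigma^2 = \tau^2 \Phi''(\tau)/\Phi(\tau) = \tfrac14 \cdot 16 / 2 = 2 < \infty$, so the finite-variance hypothesis holds.

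Then I would substitute into the constant
\begin{align*}
c = 2\tau^{-1}\bigl(\Phi(\tau)\log \Phi'(\tau)\bigr)^{1/2}\bigl(2\pi\Phi''(\tau)\bigr)^{-1/2},
\end{align*}
obtaining $c = 2 \cdot 2 \cdot (2\log 4)^{1/2} \cdot (32\pi)^{-1/2}$. Simplifying, $c = 4 \cdot (2\log 4)^{1/2}/(32\pi)^{1/2} = 4 (2\log 4 / (32\pi))^{1/2} = 4(\log 4/(16\pi))^{1/2} = (16 \log 4/(16\pi))^{1/2} = (\log 4/\pi)^{1/2}$, which is exactly the claimed constant $\sqrt{(\log 4)/\pi}$. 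Both parts (i) and (ii) then follow verbatim from the corresponding parts of Theorem~\ref{thm:ordered-simply-generated}.

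There is essentially no obstacle here: the corollary is a direct specialization, and the entire content is the arithmetic simplification of $c$. The only point requiring a moment's care is the algebraic cancellation in the final step, where the factor $2\log 4$ from the numerator and the $32\pi = 16 \cdot 2\pi$ in the denominator combine to leave precisely $\log 4/\pi$ under the square root; one should verify this cancellation rather than trust it blindly, but it presents no real difficulty.
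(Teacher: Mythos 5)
Your proposal is correct and follows exactly the paper's own route: both specialize Theorem~\ref{thm:ordered-simply-generated} to the weight sequence $\phi_m=1$, find $\tau=1/2$ with $\Phi(\tau)=2$, $\Phi'(\tau)=4$, $\Phi''(\tau)=16$, and simplify the constant $c$ to $\sqrt{(\log 4)/\pi}$. The arithmetic checks out, so nothing further is needed.
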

\begin{proof}
The family of plane trees is obtained as the simply generated family of trees with weight sequence $(\phi_k)_{k \geq 0}$ with $\phi_k=1$ for every $k \geq 0$ (see Example~\ref{ex:plane}). In particular, we find that $\Phi(x)=\sum_{k\geq 0}x^k = \frac{1}{1-x}$ and that $\tau=\frac{1}{2}$ solves the equation $\tau\Phi'(\tau)=\Phi(\tau)$. Thus, the constant $c$ from Theorem~\ref{thm:ordered-simply-generated} evaluates to
\begin{align*}
c=\frac{2}{\tau}\sqrt{\frac{\Phi(\tau)\log(\Phi'(\tau))}{2\pi\Phi''(\tau)}}=\sqrt{\frac{\log 4}{\pi}}.
\end{align*}

\end{proof}

For $d$-ary trees, we obtain the following corollary (the result for binary trees was already given in the conference version \cite{SeelbachWagner20} of this paper):
\begin{corollary}\label{cor:distinct-binary}
Let $H_n$ denote the total number of distinct fringe subtrees in a uniformly random $d$-ary tree of size $n$. Then
\begin{itemize}
\item[(i)] $\displaystyle \Ex(H_n)=\left(\frac{2d}{\pi} \Big( \frac{d}{d-1} \log d - \log(d-1)\Big)\right)^{1/2}\frac{n}{\sqrt{\log n}}(1+o(1))$,
\item[(ii)] $\displaystyle H_n=\left(\frac{2d}{\pi} \Big( \frac{d}{d-1} \log d - \log(d-1)\Big)\right)^{1/2}\frac{n}{\sqrt{\log n}}(1+o(1))$ with high probability.
\end{itemize}
In particular, for the family of binary trees, we obtain 
$$H_n=2\sqrt{\frac{\log 4}{\pi}}\cdot \frac{n}{\sqrt{\log n}}(1+o(1)),$$
both in expectation and with high probability.
\end{corollary}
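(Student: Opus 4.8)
The plan is to apply Theorem~\ref{thm:ordered-simply-generated} directly, using the simply generated representation of $d$-ary trees from Example~\ref{ex:dary}, and then to compute the constant $c$ explicitly. Recall that $d$-ary trees correspond to the weight sequence $\phi_m = \binom{d}{m}$, so that $\Phi(x) = (1+x)^d$. The first step is therefore to solve $\tau\Phi'(\tau) = \Phi(\tau)$: since $\Phi'(x) = d(1+x)^{d-1}$, this equation reads $\tau d (1+\tau)^{d-1} = (1+\tau)^d$, which simplifies to $\tau d = 1+\tau$ and hence $\tau = (d-1)^{-1}$. This is consistent with the continued Example after \eqref{eq:xi-finite-variance}. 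Note that $\tau = (d-1)^{-1}$ lies strictly inside the disc of convergence (which is all of $\mathbb{R}$ for the polynomial $\Phi$), so the hypotheses of Theorem~\ref{thm:ordered-simply-generated} hold, in particular $\mathbb{V}(\xi) < \infty$.

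Next I would evaluate the three quantities entering the constant $c = 2\tau^{-1}(\Phi(\tau)\log(\Phi'(\tau)))^{1/2}(2\pi\Phi''(\tau))^{-1/2}$ at $\tau = (d-1)^{-1}$. We have $1+\tau = d/(d-1)$, so that $\Phi(\tau) = (d/(d-1))^d$, $\Phi'(\tau) = d(d/(d-1))^{d-1}$, and $\Phi''(\tau) = d(d-1)(d/(d-1))^{d-2}$. The routine but central computation is then to simplify
\begin{align*}
\log(\Phi'(\tau)) = \log d + (d-1)\big(\log d - \log(d-1)\big) = \frac{d}{d-1}\,d\log d \cdot \tfrac{d-1}{d} \cdots
\end{align*}
more carefully: $\log(\Phi'(\tau)) = \log d + (d-1)(\log d - \log(d-1))= d\log d - (d-1)\log(d-1)$. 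Substituting everything into $c$ and canceling the common powers of $d/(d-1)$ between $\Phi(\tau)$ and $\Phi''(\tau)$ (together with the factor $\tau^{-2} = (d-1)^2$ from $\tau^{-1}$ squared) should collapse the expression to the stated form $\big(\tfrac{2d}{\pi}(\tfrac{d}{d-1}\log d - \log(d-1))\big)^{1/2}$. I would present this simplification as a short chain of equalities.

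The final step is the specialization to binary trees, $d=2$, where $\tau = 1$ and the bracket becomes $2\log 2 - 0 = \log 4$, giving $c = (\tfrac{4}{\pi}\log 4)^{1/2} = 2\sqrt{(\log 4)/\pi}$, exactly matching Corollary~\ref{cor:distinct-plane} up to the factor of $2$ that reflects the binomial weights. Both parts (i) and (ii) then follow simultaneously because Theorem~\ref{thm:ordered-simply-generated} delivers the expectation estimate and the high-probability estimate at once.

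I do not expect any genuine obstacle here: the proof is a verification that the hypotheses of Theorem~\ref{thm:ordered-simply-generated} apply to the family of $d$-ary trees, followed by an algebraic evaluation of $c$. The only point requiring care is the bookkeeping in simplifying $c$, where the exponents $d$, $d-1$, $d-2$ on the factor $d/(d-1)$ must be combined correctly with the factor $\tau^{-1} = d-1$; a sign error or an off-by-one in these exponents is the most likely source of a mistake, so I would double-check the cancellation by re-deriving the $d=2$ case independently against Corollary~\ref{cor:distinct-plane}.
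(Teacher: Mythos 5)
Your proposal is correct and follows exactly the paper's own route: apply Theorem~\ref{thm:ordered-simply-generated} to the simply generated family with $\Phi(x)=(1+x)^d$ and $\tau=(d-1)^{-1}$, then simplify $c=\frac{2}{\tau}\sqrt{\frac{\Phi(\tau)\log(\Phi'(\tau))}{2\pi\Phi''(\tau)}}$ using $\log\Phi'(\tau)=d\log d-(d-1)\log(d-1)$ and $\Phi(\tau)/\Phi''(\tau)=d/(d-1)^3$. The algebra checks out, including the $d=2$ specialization.
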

\begin{proof}
The family of $d$-ary trees is obtained as the simply generated family of trees with weight sequence $(\phi_k)_{k \geq 0}$, where $\phi_k=\binom{d}{k}$ for every $k \geq 0$ (see Example~\ref{ex:dary}). We find that $\Phi(x)=(1+x)^d$ and that $\tau=(d-1)^{-1}$ satisfies the equation $\tau\Phi'(\tau)=\Phi(\tau)$. Therefore, the constant $c$ in Theorem~\ref{thm:ordered-simply-generated} evaluates for the case of $d$-ary trees to
\begin{align*}
c=\frac{2}{\tau}\sqrt{\frac{\Phi(\tau)\log(\Phi'(\tau))}{2\pi\Phi''(\tau)}}=\left(\frac{2d}{\pi} \Big( \frac{d}{d-1} \log d - \log(d-1)\Big)\right)^{1/2}.
\end{align*}
\end{proof}

We remark that Theorem~\ref{thm:ordered-simply-generated} does not apply to the family of labelled trees (see Example~\ref{ex:labeled}), as the weight sequence corresponding to the family of labelled trees is not a sequence of integers. In particular, the number of labelled trees of size $n$ is $n^{n-1}$ (see for example \cite{Drmota09}), and thus, a partition of the set of labelled trees of size $n$ into isomorphism classes of size one does not satisfy condition (C1) from Theorem~\ref{thm:master-theorem-simply-generated}.
The total number $L_n$ of distinct fringe subtrees in a uniformly random labelled tree of size $n$ was estimated in \cite{RalaivaosaonaW15}, where it was shown that 
\begin{align*}
\Ex(L_n)=\sqrt{\frac{2}{\pi}}\frac{n\sqrt{\log\log n}}{\sqrt{\log n}}\left(1+O\left(\frac{\log \log \log n}{\log \log n}\right)\right).
\end{align*}
Here, two fringe subtrees are considered the same if there is an isomorphism that preserves the relative order of the labels. 

\subsection{Distinct plane fringe subtrees in simply generated trees}\label{subsec:distinctplane}

In this subsection, we consider simply generated families $\mathcal{F}$ of trees which admit a plane embedding: 
For instance, for the family of $d$-ary trees (see Example~\ref{ex:dary}), we find that each $d$-ary tree can be considered as a plane tree in a natural way by simply forgetting the positions to which the branches of the vertices are attached, such that there is no distinction between different types of vertices of the same degree. Likewise,  trees from the simply generated family of labelled trees (see Example~\ref{ex:labeled}) admit a unique plane representation if we order the children of each vertex according to their labels and then disregard the vertex labels.
For the family of plane trees (see Example~\ref{ex:plane}), the results from this section will be equivalent to the results presented in the previous section.

In order to count the number of distinct plane trees represented by the fringe subtrees of a random tree $T_n$ drawn from a simply generated family of trees, we need the following result which follows from Theorem~\ref{thm:janson}:

\begin{lemma}\label{lem:plane-janson}
Let $\xi$ be the offspring distribution of a critical Galton--Watson process satisfying $\mathbb{V}(\xi)=\sigma^2<\infty$, and let $T_k$ be a conditioned Galton--Watson tree of size $k$ with respect to $\xi$. Let $M=\{m \in \mathbb{N} \mid \Prob(\xi=m)>0\}$, and let
\begin{align*}
\mu =\sum_{m \in M}\Prob(\xi=m)\log(\Prob(\xi=m)).
\end{align*}
Furthermore, let 
\begin{align*}
\nu(T_k)= \prod_{v \in T_k}\mathbb{P}(\xi=\deg(v))
\end{align*}
(as defined in Section~\ref{sec:simply-generated}). The probability that
\begin{align*}
\nu(T_k)\leq e^{(\mu+\varepsilon)k}
\end{align*}
holds tends to $1$ for every fixed $\varepsilon>0$ as $k \to \infty$. 
\end{lemma}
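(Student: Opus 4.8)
The plan is to recognize $\log\nu(T_k)$ as an additive functional and then invoke Theorem~\ref{thm:janson}. Writing $p_m=\Prob(\xi=m)$ and taking logarithms in the definition of $\nu$, we have
$$\log\nu(T_k)=\sum_{v\in T_k}\log p_{\deg(v)}=F(T_k),$$
where $F$ is the additive functional associated with the toll function $f(t)=\log p_{\deg(r)}$ that reads off the root degree $r$ of $t$; indeed, the root of the fringe subtree $t(v)$ is $v$ itself, so $f(t(v))=\log p_{\deg(v)}$. In the unconditioned Galton--Watson tree $T$ the root degree is distributed as $\xi$, whence $\Ex(f(T))=\sum_{m\in M}p_m\log p_m=\mu$. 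Thus, once the hypotheses of Theorem~\ref{thm:janson} are verified, we obtain $F(T_k)/k\overset{p}{\to}\mu$, which is exactly the desired statement: for every $\varepsilon>0$,
$$\Prob\big(\nu(T_k)\le e^{(\mu+\varepsilon)k}\big)=\Prob\big(F(T_k)/k\le\mu+\varepsilon\big)\to 1.$$
It therefore remains to check the two conditions $\Ex(|f(T)|)<\infty$ and $|\Ex(f(T_k))|=o(k^{1/2})$.

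The first condition is just finiteness of the entropy: $\Ex(|f(T)|)=\sum_{m\in M}p_m(-\log p_m)=-\mu$. This follows from the finiteness of the mean $\Ex(\xi)=1$ (which in turn follows from $\sigma^2<\infty$). Concretely, splitting the tail of the sum according to whether $p_m\ge m^{-2}$ or $p_m<m^{-2}$ and using that $-x\log x$ is increasing near $0$, one bounds each part by a convergent series (the first by $2\sum_m p_m\log m\le 2\Ex(\xi)$ since $\log m\le m$, the second by $2\sum_m m^{-2}\log m$); alternatively one may compare with the geometric distribution, which maximizes entropy for a given mean. In particular $\mu$ is a finite constant.

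The crux is the second condition. Since $f\le 0$, I need to show $\Ex(-f(T_k))=\Ex(-\log p_{D_k})=o(k^{1/2})$, where $D_k$ is the root degree of $T_k$; in fact I will show it is $O(1)$. Decomposing the conditioned tree at its root gives $\Prob(D_k=m)=p_m q^{*m}_{k-1}/q_k$, where $q^{*m}_{j}$ is the probability that a forest of $m$ independent Galton--Watson trees has total size $j$ and $q_k\sim(2\pi\sigma^2)^{-1/2}k^{-3/2}$ (as used in Lemma~\ref{lemma:galton-expectation-variance}). By the Otter--Dwass formula (the cycle lemma, see \cite{Janson12}), $q^{*m}_{k-1}=\tfrac{m}{k-1}\Prob(S_{k-1}=k-1-m)$ with $S_j=\xi_1+\cdots+\xi_j$, and by the local limit theorem $\Prob(S_{k-1}=k-1-m)\le C_0(k-1)^{-1/2}$ uniformly in $m$ (the span of $\xi$ only affects the constant). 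Hence $q^{*m}_{k-1}\le C_0\,m\,(k-1)^{-3/2}$, and therefore
$$\Ex(-\log p_{D_k})=\frac{1}{q_k}\sum_{m}p_m q^{*m}_{k-1}(-\log p_m)\le\frac{C_0}{q_k(k-1)^{3/2}}\sum_{m}m\,p_m(-\log p_m).$$
Here $\sum_m m\,p_m(-\log p_m)<\infty$: splitting according to whether $p_m\ge e^{-m}$ or not yields $m\,p_m(-\log p_m)\le m^2 p_m+m^2 e^{-m}$, which is summable because $\Ex(\xi^2)=\sigma^2+1<\infty$. Since $q_k(k-1)^{3/2}\to(2\pi\sigma^2)^{-1/2}$, the whole expression is $O(1)$, establishing the second condition.

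The main obstacle is precisely this uniform control of the root-degree distribution: without it, large degrees $m$ (for which $-\log p_m$ can be of order $m\log m$, as for Poisson offspring) could in principle contribute a term growing faster than $\sqrt{k}$. The Otter--Dwass identity together with the uniform local-limit bound $\Prob(S_n=j)=O(n^{-1/2})$ is what tames this, reducing everything to the convergent series $\sum_m m\,p_m(-\log p_m)$. Intuitively, $\Prob(D_k=m)\to m\,p_m$, the size-biased offspring law, so $\Ex(-\log p_{D_k})$ tends to the finite constant $\sum_m m\,p_m(-\log p_m)$, and the uniform bound is what legitimizes passing to this limit. With both conditions verified, Theorem~\ref{thm:janson} applies and the proof is complete.
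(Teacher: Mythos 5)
Your proof is correct and follows essentially the same route as the paper: recognize $\log\nu(T_k)$ as the additive functional with toll function $f(t)=\log\Prob(\xi=\rho(t))$, verify the two hypotheses of Theorem~\ref{thm:janson}, and conclude. The only difference is that where the paper simply cites the bound $\Prob(\rho(T_k)=m)\leq cm\Prob(\xi=m)$ from \cite{janson05}, you re-derive it via the Otter--Dwass formula and a uniform local limit bound --- a valid, self-contained substitute for the citation.
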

\begin{proof}
Let $\rho(t)$ denote the degree of the root vertex of a plane tree $t \in \mathcal{T}$, and define the function $f:\mathcal{T}\to \mathbb{R}$ by
\begin{align*}
f(t)=\begin{cases}
\log(\Prob(\xi=\rho(t))) \quad &\text{ if } \Prob(\xi=\rho(t))>0,\\
0 &\text{otherwise.}
\end{cases}
\end{align*}
For every $t \in \mathcal{T}$ with $\nu(t)>0$, the associated additive functional is
\begin{align*}
F(t)=\sum_{v \in t}f(t(v))=\sum_{v \in t}\log\left(\Prob(\xi=\rho(t(v)))\right)=\log\left(\prod_{v \in t}\Prob(\xi=\deg(v))\right)=\log(\nu(t)).
\end{align*}
Let $T$ denote the unconditioned Galton--Watson tree corresponding to $\xi$. Then
\begin{align*}
\Ex(|f(T)|)=\sum_{m \in M}\Prob(\xi=m)|\log(\Prob(\xi=m))|.
\end{align*}
Note that if $\Prob(\xi=m)>e^{-m}$, we have $|\log(\Prob(\xi=m))|\leq m$, and if $\Prob(\xi=m)\leq e^{-m}$, we have $\Prob(\xi=m)|\log(\Prob(\xi=m))|\leq e^{-m/2}$. Thus, we are able to bound $\Ex(|f(T)|)$ from above by
\begin{align}\label{eq:expectation-f-finite}
\Ex(|f(T)|)\leq \sum_{m \geq 0}\Prob(\xi=m)m+\sum_{m \geq 0}e^{-m/2}=\Ex(\xi)+\frac{\sqrt{e}}{\sqrt{e}-1}<\infty,
\end{align}
as the Galton--Watson process is critical by assumption. Furthermore, we have
\begin{align*}
|\Ex(f(T_k))|\leq \sum_{m \geq 0}\Prob(\rho(T_k)=m)|\log(\Prob(\xi=m))|.
\end{align*}
By (2.7) in \cite{janson05}, there is a constant $c>0$ (independent of $k$ and $m$) such that
\begin{align*}
\Prob(\rho(T_k)=m)\leq cm\Prob(\xi=m)
\end{align*}
for all $m,k\geq 0$. We thus find
\begin{align}\label{eq:toll-function-finite}
|\Ex(f(T_k))|\leq c\sum_{m \in M}m\Prob(\xi=m)|\log(\Prob(\xi=m))|\leq c\sum_{m \geq 0}\Prob(\xi=m)m^2 + c\sum_{m \geq 0}me^{-m/2}<\infty,
\end{align}
as $\mathbb{V}(\xi)<\infty$ by assumption. As the upper bound holds independently of $k$, we thus have $|\Ex(f(T_k))| = O(1)$. Altogether, we find that the requirements of Theorem~\ref{thm:janson} are satisfied. Let
\begin{align*}
\mu=\Ex(f(T))=\sum_{m \in M}\Prob(\xi=m) \log(\Prob(\xi=m)).
\end{align*}
Then by Theorem~\ref{thm:janson}, the probability that
\begin{align*}
F(T_n)=\log(\nu(T_n)) \leq (\mu+\varepsilon)n
\end{align*}
holds tends to $1$ for every $\varepsilon>0$ as $n \to \infty$. Thus, the statement follows.
\end{proof}

We are now able to derive the following theorem on the number of distinct plane trees represented by the fringe subtrees of a random tree of size $n$ from a simply generated family of trees:

\begin{theo}\label{thm:plane-simply-generated}
Let $J_n$ denote the number of distinct plane trees represented by the fringe subtrees of a random tree $T_n$ of size $n$ drawn from a simply generated family of trees $\mathcal{F}$ with weight sequence $(\phi_m)_{m \geq 0}$, and let $\Phi(x)=\sum_{m \geq 0}\phi_mx^m$. Let $R$ denote the radius of convergence of $\Phi$ and suppose that there exists $\tau \in (0,R]$ with $\tau\Phi'(\tau)=\Phi(\tau)$. Moreover, suppose that the offspring distribution $\xi$ of the Galton--Watson process corresponding to $\mathcal{F}$ satisfies $\mathbb{V}(\xi)<\infty$.
Set $\kappa = 2\tau^{-1}(\Phi(\tau))^{1/2}(2\pi\Phi''(\tau))^{-1/2}$. Furthermore, let $M =\{k \geq 0 \mid \phi_k>0\}$ and define the sequence $(\psi_k)_{k \geq 0}$ by $\psi_k=1$ if $k \in M$ and $\psi_k=0$ otherwise. Let $\Psi(x)=\sum_{k \geq 0}\psi_kx^k$, and let $\upsilon$ denote the solution to the equation $\upsilon\Psi'(\upsilon)=\Psi(\upsilon)$. Set 
\begin{align*}
C_1 =\log(\Psi'(\upsilon)) \quad \text{ and } \quad C_2 =-\mu,
\end{align*}
with $\mu$ defined as in Lemma~\ref{lem:plane-janson}. Then
\begin{itemize}
\item[(i)]$\displaystyle \kappa \sqrt{C_2} \frac{n}{\sqrt{\log n}}(1+o(1)) \leq \Ex(J_n) \leq \kappa \sqrt{ C_1}\frac{n}{\sqrt{\log n}}(1+o(1))$,
\item[(ii)]$\displaystyle \kappa \sqrt{ C_2} \frac{n}{\sqrt{\log n}}(1+o(1)) \leq J_n \leq \kappa \sqrt{C_1}\frac{n}{\sqrt{\log n}}(1+o(1))$ with high probability.
\end{itemize}
\end{theo}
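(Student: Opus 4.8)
The plan is to derive Theorem~\ref{thm:plane-simply-generated} directly from the master theorem (Theorem~\ref{thm:master-theorem-simply-generated}) by checking that the partition of $\mathcal{F}_k$ into plane-shape classes satisfies conditions (C1) and (C2). Two members of $\mathcal{F}$ lie in the same class exactly when they have the same plane shape, so the isomorphism classes of size $k$ are precisely the plane trees of size $k$ all of whose vertex degrees lie in $M=\{m:\phi_m>0\}$. These are the objects enumerated by the simply generated family with weight sequence $(\psi_m)$, so $|\mathcal{I}_k|$ equals the corresponding coefficient, which I will write as $\tilde y_k$. The key preliminary observation is that, under the measure on members of $\mathcal{F}$, the probability of a plane-shape class $[t]$ of size $k$ equals $P_{\Phi}(t)$, which by the Galton--Watson equivalence \eqref{eq:xi-distributions-same} is $P_{\xi}(t)=\nu(t)/q_k$, where $q_k=\sum_{t'\in\mathcal{T}_k}\nu(t')=\Prob(|T|=k)\sim (2\pi\sigma^2)^{-1/2}k^{-3/2}$ is the probability that the unconditioned Galton--Watson tree has size $k$ (the quantity $q_k$ from the proof of Lemma~\ref{lemma:galton-expectation-variance}).

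For condition (C1), I would apply Theorem~\ref{thm:number-of-trees} to the weight sequence $(\psi_m)$. Since $\upsilon$ solves $\upsilon\Psi'(\upsilon)=\Psi(\upsilon)$, this yields $|\mathcal{I}_k|=\tilde y_k=\tilde d\sqrt{\Psi(\upsilon)/(2\pi\Psi''(\upsilon))}\,\Psi'(\upsilon)^k k^{-3/2}(1+O(k^{-1}))$ along $k\equiv 1\bmod \tilde d$ (and $0$ otherwise), whence $\limsup_{k\to\infty}\tfrac{\log|\mathcal{I}_k|}{k}=\log\Psi'(\upsilon)=C_1$. I would first record that the hypotheses of Theorem~\ref{thm:number-of-trees} do hold for $\Psi$, i.e.\ that the stated $\upsilon$ exists, which is already part of the standing assumptions of the statement.

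For condition (C2), fix $\varepsilon>0$ and set $\mathcal{J}_k=\{[t]\in\mathcal{I}_k:\nu(t)\le e^{(\mu+\varepsilon)k}\}$. Then (C2a) is exactly the content of Lemma~\ref{lem:plane-janson}: the probability that $T_k$ lands in a class of $\mathcal{J}_k$ equals $\Prob(\nu(T_k)\le e^{(\mu+\varepsilon)k})$, which tends to $1$. For (C2b), the probability of any fixed class $[t]\in\mathcal{J}_k$ satisfies
\[
P_{\xi}(t)=\frac{\nu(t)}{q_k}\le \frac{e^{(\mu+\varepsilon)k}}{q_k}=e^{(\mu+\varepsilon)k+O(\log k)}=e^{-(-\mu-\varepsilon)k+o(k)},
\]
using the asymptotics of $q_k$; thus (C2) holds with constant $-\mu-\varepsilon$. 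Feeding this into Theorem~\ref{thm:master-theorem-simply-generated} gives the lower bounds $\kappa\sqrt{-\mu-\varepsilon}\,n/\sqrt{\log n}\,(1+o(1))$ for $\Ex(J_n)$ and for $J_n$ with high probability, while (C1) supplies the matching upper bounds with $\sqrt{C_1}$. Since $\varepsilon>0$ is arbitrary, letting $\varepsilon\to 0$ replaces $-\mu-\varepsilon$ by $C_2=-\mu$ in both the expectation and the high-probability lower bounds, completing (i) and (ii).

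The genuinely hard analytic input — that a typical tree of size $k$ satisfies $\nu(T_k)\le e^{(\mu+\varepsilon)k}$ — is already isolated in Lemma~\ref{lem:plane-janson}, which rests on Janson's additive-functional law of large numbers, so granting that lemma the remainder is essentially bookkeeping. I expect the two points requiring care to be, first, the identification of the class probability as $\nu(t)/q_k$ together with the use of the sharp $q_k$-asymptotics, which is what makes the exponent in (C2b) come out as $-\mu$ rather than a weaker bound; and second, the fact that the sharp constant $C_2=-\mu$ is only reached in the limit $\varepsilon\to0$, since for each fixed $\varepsilon$ the threshold set $\mathcal{J}_k$ yields only $-\mu-\varepsilon$, so one must produce the whole family of bounds and then optimize (for the high-probability statement this amounts to the standard observation that a bound holding whp for every fixed $\varepsilon$ yields the $\varepsilon\to0$ conclusion whp). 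Finally, I would verify that $\kappa$ as defined here coincides with the $\sqrt{2/(\pi\sigma^2)}$ of the master theorem via $\sigma^2=\tau^2\Phi''(\tau)/\Phi(\tau)$ from \eqref{eq:xi-finite-variance}, so that the prefactors agree.
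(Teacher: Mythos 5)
Your proposal is correct and follows essentially the same route as the paper: (C1) via Theorem~\ref{thm:number-of-trees} applied to the weight sequence $(\psi_k)$, and (C2) via Lemma~\ref{lem:plane-janson} combined with the $k^{-3/2}$ asymptotics of $\sum_{t\in\mathcal{T}_k}\nu(t)$. The only (cosmetic) difference is that the paper diagonalizes Lemma~\ref{lem:plane-janson} into a single sequence $\varepsilon_k\to 0$ so that $\mathcal{J}_k$ directly satisfies (C2b) with $C_2=-\mu$, whereas you work with a fixed $\varepsilon$ and pass to the limit at the end; both are valid.
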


\begin{proof}
Here we consider two trees as isomorphic if their plane representations are identical. This yields a partition of $\mathcal{F}_k$ into isomorphism classes $\mathcal{I}_k$, for which we will verify that the conditions of Theorem~\ref{thm:master-theorem-simply-generated} are satisfied. 
The number $|\mathcal{I}_k|$ of isomorphism classes equals the number of all plane trees of size $k$ with vertex degrees in $M$, 
which can be determined from Theorem~\ref{thm:number-of-trees}: the weight sequence $(\psi_k)_{k \geq 0}$ characterizes the simply generated family of plane trees with vertex degrees in $M$. We thus find by Theorem~\ref{thm:number-of-trees}:
\begin{align*}
\log(|\mathcal{I}_k|)=\log(\Psi'(\upsilon))k(1+o(1)),
\end{align*}
so condition (C1) is satisfied with
\begin{align*}
C_1 =\log(\Psi'(\upsilon)).
\end{align*}
Now we show that condition (C2) is satisfied as well. By Lemma~\ref{lem:plane-janson}, there exists a sequence of integers $k_j$  such that
$$\Prob\big( \nu(T_k)\leq e^{(\mu+1/j)k} \big) \geq 1 - \frac1j$$
for all $k \geq k_j$. So if we set $\varepsilon_k = \min \{ \frac1j \mid k_j \leq k\}$, then
$$\Prob\big(\nu(T_k)\leq e^{(\mu+\varepsilon_k)k} \big) \geq 1 - \varepsilon_k,$$
and $\varepsilon_k \to 0$ as $k \to \infty$. Now define the subset $\mathcal{J}_k \subseteq \mathcal{I}_k$ as the set of isomorphism classes of trees whose corresponding plane embedding $t$ satisfies $\nu(t) \leq e^{(\mu+\varepsilon_k)k}$. The probability that a random tree of size $k$ in $\mathcal{F}$ lies in an isomorphism class in the set $\mathcal{J}_k$ is precisely the probability that a conditioned Galton--Watson tree $T_k$ corresponding to the offspring distribution $\xi$ satisfies $\nu(T_k) \leq e^{(\mu+\varepsilon_k)k}$.
Thus we find that the probability that a random tree in $\mathcal{F}_k$ lies in an isomorphism class in the set  $\mathcal{J}_k$ tends to $1$ as $k \to \infty$.

Furthermore, the probability that a random tree $T_k$ of size $k$ in $\mathcal{F}$ has the shape of $t \in \mathcal{T}_k$ when regarded as a plane tree, i.e., the probability that $\mathcal{T}_k$ lies in the fixed isomorphism class $I \in \mathcal{J}_k$ containing all trees in the family $\mathcal{F}$ with plane representation $t$ is never greater than 
\begin{align*}
P_{\xi}(t)=\frac{\nu(t)}{\sum_{t' \in \mathcal{T}_k}\nu(t')}.
\end{align*}
In particular, the numerator is bounded by $e^{(\mu+\varepsilon_k)k}$ as $I \in \mathcal{J}_k$. In order to estimate the denominator, we apply Theorem~\ref{thm:number-of-trees}: we find that $\sum_{t' \in \mathcal{T}_n}\nu(t')$ is the total weight of all plane trees of size $n$ with respect to the weight sequence $(\Prob(\xi=k))_{k \geq 0}=(\phi_k\tau^k\Phi(\tau)^{-1})_{k \geq 0}$. If we set $\tilde{\Phi}(x) = \sum_{k \geq 0} \phi_k \tau^k \Phi(\tau)^{-1} x^k$, then $\tilde{\Phi}(1) = \tilde{\Phi}'(1) = 1$, and we obtain from Theorem~\ref{thm:number-of-trees} that
\begin{align}\label{eq:sum-nu}
\sum_{t \in \mathcal{T}_n}\nu(t)=\sqrt{\frac{\tilde{\Phi}(1)}{2\pi\tilde{\Phi}''(1)}}\frac{\tilde{\Phi}'(1)^n}{n^{3/2}} (1+O(n^{-1}))=\sqrt{\frac{\Phi(\tau)}{2\pi\tau^2\Phi''(\tau)}}n^{-3/2} (1+O(n^{-1})).
\end{align}
Hence,
\begin{align*}
P_{\xi}(t)\leq \sqrt{\frac{2\pi\tau^2\Phi''(\tau)}{\Phi(\tau)}}k^{3/2}e^{(\mu+\varepsilon_k)k}(1+O(k^{-1})) = e^{\mu k + o(k)},
\end{align*}
which shows that condition (C2) is satisfied with $C_2 = -\mu$. The statement of Theorem~\ref{thm:plane-simply-generated} follows from Theorem~\ref{thm:master-theorem-simply-generated}, as by \eqref{eq:xi-finite-variance}, we know that the variance of the Galton--Watson process corresponding to $\mathcal{F}$ is given by
\begin{align*}
\mathbb{V}(\xi)=\sigma^2=\frac{\tau^2\Phi''(\tau)}{\Phi(\tau)}.
\end{align*}
\end{proof}

We remark that for the family of plane trees, the statement of Theorem~\ref{thm:plane-simply-generated} is equivalent to the statement of Theorem~\ref{thm:ordered-simply-generated}: as $\phi_k=1$ for every $k \geq 0$ in this case, the constant $C_1$ in the upper bound of Theorem~\ref{thm:plane-simply-generated} evaluates to $\log(\Phi'(\tau))$. Furthermore, for every plane tree $t$ of size $n$, we have $\nu(t)/\sum_{t' \in \mathcal{T}_n} \nu(t')=1/y_n$, so that the constant $C_2$ in Theorem~\ref{thm:plane-simply-generated} evaluates to $\log(\Phi'(\tau))$ as well.

A plane representation of a binary tree is a Motzkin tree (see Example~\ref{ex:motzkin}). So for the family of binary trees, we obtain the following result regarding the number of distinct plane trees, i.e.,~Motzkin trees, represented by the fringe subtrees of a uniformly random binary tree of size $n$:
\begin{corollary}
Let $J_n$ denote the number of distinct plane trees represented by the fringe subtrees of a uniformly random binary tree of size $n$. Let
\begin{align*}
c_7 =\sqrt{\frac{6\log 2}{\pi}}\approx 1.1505709891 \text{ and }  c_8 =\frac{2\sqrt{\log 3}}{\sqrt{\pi}}\approx1.1827073223.
\end{align*}
Then
\begin{itemize}
\item[(i)] $\displaystyle c_7 \frac{n}{\sqrt{\log n}}(1+o(1))\leq \Ex(J_n)\leq c_8 \frac{n}{\sqrt{\log n}}(1+o(1))$,
\item[(ii)] $\displaystyle c_7 \frac{n}{\sqrt{\log n}}(1+o(1))\leq  J_n\leq c_8\frac{n}{\sqrt{\log n}}(1+o(1))$ with high probability.
\end{itemize}
\end{corollary}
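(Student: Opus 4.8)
The plan is to apply Theorem~\ref{thm:plane-simply-generated} directly to the simply generated family of binary trees, computing each of the relevant constants $\kappa$, $C_1$ and $C_2$ explicitly and verifying that they combine to $c_7$ and $c_8$.

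First I would recall (Example~\ref{ex:dary} with $d=2$) that binary trees correspond to the weight sequence $\phi_k = \binom{2}{k}$, so that $\Phi(x) = (1+x)^2$, and that the equation $\tau\Phi'(\tau) = \Phi(\tau)$ is solved by $\tau = 1$. Since $\Phi$ is a polynomial, its radius of convergence is infinite and $\tau = 1$ lies in the interior, whence $\mathbb{V}(\xi) = \tau^2\Phi''(\tau)/\Phi(\tau) = 1/2 < \infty$ and the hypotheses of Theorem~\ref{thm:plane-simply-generated} are met. With $\Phi(1) = 4$ and $\Phi''(1) = 2$, the prefactor evaluates to
\begin{align*}
\kappa = \frac{2}{\tau}\sqrt{\frac{\Phi(\tau)}{2\pi\Phi''(\tau)}} = \frac{2}{\sqrt{\pi}}.
\end{align*}

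Next I would determine the upper-bound constant $C_1$. Here the set of admissible degrees is $M = \{0,1,2\}$, so that $\Psi(x) = 1 + x + x^2$ is exactly the Motzkin generating function (consistent with the fact that plane embeddings of binary trees are Motzkin trees, Example~\ref{ex:motzkin}). Solving $\upsilon\Psi'(\upsilon) = \Psi(\upsilon)$ gives $\upsilon = 1$, whence $C_1 = \log\Psi'(1) = \log 3$ and therefore $\kappa\sqrt{C_1} = \frac{2\sqrt{\log 3}}{\sqrt{\pi}} = c_8$.

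For the lower-bound constant $C_2 = -\mu$, I would use the critical offspring distribution induced by $\Phi$, namely $\Prob(\xi = m) = \phi_m\tau^m\Phi(\tau)^{-1}$, which gives $\Prob(\xi=0) = \Prob(\xi=2) = \tfrac14$ and $\Prob(\xi=1) = \tfrac12$. A short computation then yields
\begin{align*}
\mu = \sum_{m \in M}\Prob(\xi=m)\log\Prob(\xi=m) = -\tfrac12\log 4 - \tfrac12\log 2 = -\tfrac32\log 2,
\end{align*}
so that $C_2 = \tfrac32\log 2$ and $\kappa\sqrt{C_2} = \frac{2}{\sqrt{\pi}}\sqrt{\tfrac32\log 2} = \sqrt{\tfrac{6\log 2}{\pi}} = c_7$. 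Substituting $\kappa$, $C_1$ and $C_2$ into Theorem~\ref{thm:plane-simply-generated} then gives both (i) and (ii). There is essentially no genuine obstacle in this argument, since everything reduces to the already-established Theorem~\ref{thm:plane-simply-generated}; the only points requiring a little care are the final arithmetic simplification $\frac{2}{\sqrt{\pi}}\sqrt{\tfrac32\log 2} = \sqrt{6\log 2/\pi}$ and the observation that $C_1 = \log 3$ and $C_2 = \tfrac32\log 2$ genuinely differ, so that the gap between the upper and lower bounds is a real feature of the problem rather than an artifact of the estimates.
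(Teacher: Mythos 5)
Your proposal is correct and follows exactly the same route as the paper: apply Theorem~\ref{thm:plane-simply-generated} to the binary-tree weight sequence with $\Phi(x)=(1+x)^2$, $\tau=1$, $\Psi(x)=1+x+x^2$, $\upsilon=1$, the offspring distribution $(\tfrac14,\tfrac12,\tfrac14)$, and the resulting constants $\kappa=2/\sqrt{\pi}$, $C_1=\log 3$, $C_2=\tfrac32\log 2$. All computations check out, so there is nothing to add.
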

\begin{proof}
The family of binary trees is obtained from the weight sequence $(\phi_k)_{k \geq 0}$ with $\Phi(x)=1+2x+x^2$. We find that $\Psi(x)=1+x+x^2$, with $\Psi$ defined as in Theorem~\ref{thm:plane-simply-generated}. Thus, $\upsilon=1$ solves the equation $\upsilon\Psi'(\upsilon)=\Psi(\upsilon)$ and $\Psi'(\upsilon)=3$. Hence, the constant $C_1$ in Theorem~\ref{thm:plane-simply-generated} evaluates to  $C_1=\log 3$. We remark again that the function $\Psi$ characterizes the family of Motzkin trees (Example~\ref{ex:motzkin}). The asymptotic growth of the number of Motzkin trees is well known, see e.g.~\cite{FlajoletS09}.
To compute the constant for the lower bound, we find that $\tau=1$ and $\Phi(\tau)=\Phi'(\tau)=4$. Hence, the offspring distribution $\xi$ of the Galton--Watson process corresponding to $\mathcal{F}$ is defined by
$\Prob(\xi=0)=1/4$, $\Prob(\xi=1)=1/2$ and $\Prob(\xi=2)=1/4$. We find
\begin{align*}
\mu=\sum_{k = 0}^2\Prob(\xi=k)\log(\Prob(\xi=k))=-\frac{3\log 2}{2},
\end{align*}
and hence $C_2=(3\log 2)/2$. With $\kappa = 2\tau^{-1}(\Phi(\tau))^{1/2}(2\pi\Phi''(\tau))^{-1/2}=2/\sqrt{\pi}$, the statement follows.
\end{proof}

Similarly, for the family of labelled trees, we obtain the following result (recall that we obtain a unique plane representation of a labelled tree if we first order the children of each vertex according to their labels and then disregard the vertex labels):

\begin{corollary}
Let $J_n$ denote the number of distinct plane trees represented by the fringe subtrees of a uniformly random labelled tree of size $n$. Let
\begin{align*}
c_9 =\bigg(\frac{2}{\pi} \Big(1 + \sum_{k \geq 2}\frac{\log(k!)}{e k!} \Big)\bigg)^{1/2}\approx 0.9114210724  \text{ and } c_{10}=\sqrt{\frac{2\log 4}{\pi}}\approx 0.9394372787.
\end{align*}
Then
\begin{itemize}
\item[(i)] $\displaystyle c_9 \frac{n}{\sqrt{\log n}}(1+o(1))\leq \Ex(J_n)\leq c_{10} \frac{n}{\sqrt{\log n}}(1+o(1))$,
\item[(ii)] $\displaystyle c_9 \frac{n}{\sqrt{\log n}}(1+o(1))\leq  J_n\leq c_{10}\frac{n}{\sqrt{\log n}}(1+o(1))$ with high probability.
\end{itemize}
\end{corollary}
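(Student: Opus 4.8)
The plan is to apply Theorem~\ref{thm:plane-simply-generated} directly to the simply generated family of labelled trees, for which $\phi_k = 1/k!$ and hence $\Phi(x) = e^x$ (Example~\ref{ex:labeled}). Once this identification is made, everything reduces to evaluating the three constants $\kappa$, $C_1$ and $C_2$ appearing in that theorem for this particular weight series; the two bounds in parts (i) and (ii) then follow immediately, since Theorem~\ref{thm:plane-simply-generated} already delivers both the expectation and the high-probability statement.

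First I would locate $\tau$: the defining equation $\tau\Phi'(\tau) = \Phi(\tau)$ reads $\tau e^{\tau} = e^{\tau}$, so $\tau = 1$, and since $\Phi(\tau) = \Phi'(\tau) = \Phi''(\tau) = e$, the prefactor evaluates to
\[
\kappa = 2\tau^{-1}\sqrt{\frac{\Phi(\tau)}{2\pi\Phi''(\tau)}} = \frac{2}{\sqrt{2\pi}} = \sqrt{\frac{2}{\pi}}.
\]
For $C_1$, note that $M = \{k : \phi_k > 0\}$ is the set of all nonnegative integers, so $\psi_k = 1$ for every $k$ and $\Psi(x) = \sum_{k\geq 0} x^k = (1-x)^{-1}$. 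This is exactly the plane-tree generating function, so $\upsilon = \tfrac12$ and $\Psi'(\upsilon) = 4$, giving $C_1 = \log 4$. The upper bound constant is therefore $\kappa\sqrt{C_1} = \sqrt{2\log 4/\pi} = c_{10}$, as claimed.

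The only real computation is $C_2 = -\mu$, which requires the entropy-type sum for the offspring distribution. Here $\xi$ is the Poisson law with $\Prob(\xi = m) = (e\,m!)^{-1}$ (Example~\ref{ex:labeled}, continued), so
\[
\mu = \sum_{m\geq 0} \frac{1}{e\,m!}\log\frac{1}{e\,m!} = -\sum_{m\geq 0}\frac{1+\log(m!)}{e\,m!}.
\]
Splitting the sum, $\sum_{m\geq 0}(e\,m!)^{-1} = 1$, while the terms $m = 0,1$ contribute nothing to the $\log(m!)$ part, leaving $\mu = -1 - \sum_{k\geq 2}\frac{\log(k!)}{e\,k!}$, i.e.\ $C_2 = 1 + \sum_{k\geq 2}\frac{\log(k!)}{e\,k!}$. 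Hence the lower bound constant is $\kappa\sqrt{C_2} = \bigl(\tfrac{2}{\pi}\bigl(1 + \sum_{k\geq 2}\frac{\log(k!)}{e\,k!}\bigr)\bigr)^{1/2} = c_9$. I expect no genuine obstacle beyond this bookkeeping: convergence of the last series is immediate because $\log(k!)/k!$ decays superexponentially, and the hypothesis $\mathbb{V}(\xi) < \infty$ of Theorem~\ref{thm:plane-simply-generated} holds automatically for the Poisson distribution, so the result drops out once the constants are assembled.
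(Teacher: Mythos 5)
Your proposal is correct and follows essentially the same route as the paper: apply Theorem~\ref{thm:plane-simply-generated} to the weight sequence $\phi_k = 1/k!$, compute $\tau = 1$, $\kappa = \sqrt{2/\pi}$, $C_1 = \log 4$ from $\Psi(x) = (1-x)^{-1}$ with $\upsilon = 1/2$, and $C_2 = -\mu$ from the entropy of the Poisson offspring distribution. All the constant evaluations match the paper's.
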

\begin{proof}
The family of labelled trees is obtained as the simply generated family of trees with weight sequence $(\phi_k)_{k \geq 0}$ satisfying $\phi_k=1/k!$ for every $k \geq 0$. We find that $\Psi(x)=\sum_{k \geq 0}x^k$ and that $\upsilon=1/2$ solves the equation $\upsilon\Psi'(\upsilon)=\Psi(\upsilon)$, so that the constant $C_1$ in Theorem~\ref{thm:master-theorem-simply-generated} evaluates to $C_1=\log 4$. In order to compute the constant for the lower bound, we first notice that $\tau=1$ solves the equation $\tau\Phi'(\tau)=\Phi(\tau)$ with $\Phi(\tau)=e$. The offspring distribution $\xi$ of the Galton--Watson process corresponding to the family of labelled trees is well known to be the Poisson distribution (with $\Prob(\xi=k)=(ek!)^{-1}$ for every $k \geq 0$). Hence, we have
\begin{align*}
\mu=\sum_{k \geq 0}\Prob(\xi=k)\log(\Prob(\xi=k))=-e^{-1}\sum_{k \geq 0}\frac{1+\log\left(k!\right)}{k!}\approx -1.3048422423.
\end{align*}
With $\kappa = 2\tau^{-1}(\Phi(\tau))^{1/2}(2\pi\Phi''(\tau))^{-1/2}=\sqrt{2/\pi}$, the statement follows.
\end{proof}

\subsection{Distinct unordered fringe subtrees in simply generated trees}\label{subsec:unordered}

In this subsection, we apply Theorem~\ref{thm:master-theorem-simply-generated} to count the number of distinct unordered trees represented by the fringe subtrees of a random tree of size $n$ drawn randomly from a simply generated family of trees. 
Thus we consider two trees from the family $\mathcal{F}$ as isomorphic if their unordered representations are identical.
This is meaningful for all simply generated families, since every rooted tree has a natural unordered representation. 
Let $t \in \mathcal{T}$ be a plane tree. As a simple application of the orbit-stabilizer theorem, one finds that the number of plane trees with the same unordered representation as $t$ is given by
\begin{align*}
\frac{\prod_{v \in t}\deg(v)!}{|\Aut(t)|},
\end{align*}
where $|\Aut(t)|$ denotes the cardinality of the automorphism group of $t$. This is because the permutations of the branches at the different vertices of $t$ generate a group of order $\prod_{v \in t}\deg(v)!$ acting on the plane trees with the same unordered representations as $t$, and $|\Aut t|$ is the subgroup that fixes $t$. It follows that
$$\nu(t) \frac{\prod_{v \in t}\deg(v)!}{|\Aut(t)|}$$
is the total weight of all plane representations of $t$ within a simply generated family. This quantity will play the same role that $\nu(t)$ played in the proof of Theorem~\ref{thm:plane-simply-generated}. From Theorem~\ref{thm:janson}, we obtain the following result:

\begin{lemma}\label{lem:unordered-janson}
Let $\xi$ be the offspring distribution of a critical Galton--Watson process satisfying $\mathbb{V}(\xi)=\sigma^2<\infty$, and let $T_k$ be a conditioned Galton--Watson tree of size $k$ with respect to $\xi$. 
Then there is a constant $\lambda < 0$ such that the probability that
\begin{align*}
\nu(T_k)\frac{\prod_{v \in T_k}\deg(v)!}{|\Aut (T_k)|}\leq e^{(\lambda+\varepsilon)k}
\end{align*}
holds tends to $1$ for every $\varepsilon>0$ as $k \to \infty$. 
\end{lemma}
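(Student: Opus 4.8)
The plan is to realize the logarithm of the quantity in question as an additive functional and to invoke Theorem~\ref{thm:janson}, exactly as in the proof of Lemma~\ref{lem:plane-janson}; the first task is to pin down the shape of the automorphism group. Grouping the root branches of a plane tree $t$ into isomorphism classes with multiplicities $m_1,m_2,\dots$, the orbit--stabilizer argument gives the recursion $|\Aut(t)| = \prod_i m_i!\cdot\prod_j|\Aut(t_j)|$ over the root branches $t_j$, and unrolling it yields the closed form $|\Aut(t)| = \prod_{v\in t}\prod_i m_{v,i}!$, where the $m_{v,i}$ are the multiplicities of the isomorphism classes among the fringe subtrees rooted at the children of $v$. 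Hence $W(t):=\nu(t)\prod_{v\in t}\deg(v)!/|\Aut(t)| = \nu(t)\prod_{v\in t}\binom{\deg(v)}{m_{v,1},m_{v,2},\dots}$ is a product over vertices, so $\log W(t)$ is the additive functional with toll function $f(t)=\log\Prob(\xi=\rho(t))+\log\bigl(\rho(t)!\bigr)-\sum_i\log(m_i!)$, where $\rho(t)$ is the root degree and the $m_i$ are the multiplicities of the root-branch isomorphism classes.

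Next I would verify the two hypotheses of Theorem~\ref{thm:janson}. Since $0\le\sum_i\log m_i!\le\log\rho(t)!$, the new tolls are squeezed between the one already handled in Lemma~\ref{lem:plane-janson} and that toll plus $\log\rho(t)!$, so it suffices to control $\log\rho(t)!$. For the unconditioned tree $T$, the finite-variance assumption gives $\Ex\bigl(\log\rho(T)!\bigr)\le\Ex\bigl(\rho(T)\log\rho(T)\bigr)\le\Ex(\xi^2)<\infty$, which together with \eqref{eq:expectation-f-finite} yields $\Ex(|f(T)|)<\infty$. For the conditioned tree I would use the bound $\Prob(\rho(T_k)=m)\le c\,m\,\Prob(\xi=m)$ from \cite{janson05}: it gives $\Ex(\rho(T_k))\le c\sum_m m^2\Prob(\xi=m)=O(1)$, and since $\rho(T_k)\le k$ we obtain $\Ex\bigl(\log\rho(T_k)!\bigr)\le\log k\cdot\Ex(\rho(T_k))=O(\log k)=o(k^{1/2})$, hence $|\Ex(f(T_k))|=o(k^{1/2})$. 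Theorem~\ref{thm:janson} then gives $\tfrac1k\log W(T_k)\to\lambda:=\Ex(f(T))$ in probability, which immediately delivers the claimed high-probability bound $W(T_k)\le e^{(\lambda+\varepsilon)k}$. Moreover, since $W(t)$ is exactly the probability that the (a.s.\ finite) unconditioned Galton--Watson tree has unordered shape $\mathrm{unord}(t)$, we have $W(t)\le1$, so $\log W(T_k)\le0$ and therefore $\lambda\le0$.

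The genuinely hard part is the \emph{strict} inequality $\lambda<0$. This cannot follow from $W\le1$ alone, nor from the crude estimate $\lambda\le\mu+\Ex(\log\rho(T)!)$ (obtained by dropping the automorphism term), since the right-hand side can be nonnegative when the offspring law is far from a Poisson$(1)$ distribution. Writing $A=\sum_m\Prob(\xi=m)\log(m!)$, $H=-\mu$, and $\beta=\Ex\bigl(\sum_i\log m_i!\bigr)$, one has $\lambda=A-H-\beta$. If $A-H\le0$ the claim is immediate from $\beta>0$, where $\beta>0$ holds because $\Ex(\xi)=1$ and $\Prob(\xi=0)>0$ force some $m\ge2$ with $\Prob(\xi=m)>0$, so two sibling leaves, contributing $\log 2$ to $\sum_i\log m_i!$, occur with positive probability. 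The remaining case $A-H>0$ requires showing that the automorphism savings $\beta$ strictly dominate $A-H$.

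Here I would use that $-\lambda$ is precisely the per-vertex entropy rate of the unordered shape of $T_k$: the distribution of $\mathrm{unord}(T_k)$ is $W(t_u)/\sum_{t'\in\mathcal{T}_k}\nu(t')$, and the normalisation is subexponential in $k$ by Theorem~\ref{thm:number-of-trees}, so $-\tfrac1k\log\Prob(\mathrm{unord}(T_k)=\cdot)\to-\lambda$. Thus it suffices to show that forgetting the order still leaves exponentially many roughly equiprobable unordered shapes. Concretely, a random size-$k$ tree contains, with high probability, at least $ck$ vertices carrying a fixed local pattern that admits at least two distinct unordered completions (for instance a vertex whose two children may be replaced by two non-isomorphic small branches); flipping such a pattern maps size-$k$ trees to size-$k$ trees and changes the unordered shape, yielding at least $2^{ck}$ outcomes and an entropy rate bounded below by $c\log2>0$. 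Equivalently, one lower-bounds $\beta$ by the contribution of the $\mathrm{Bin}(\rho,\Prob(\xi=0))$ sibling leaves. Turning the informal ``positive density of symmetries'' statement into a rigorous lower bound on the conditional entropy---controlling the dependence introduced by conditioning on $|T_k|=k$---is the main obstacle, which I expect to handle via the local convergence of conditioned Galton--Watson trees to quantify the density of flip-sites.
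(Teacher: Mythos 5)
Your first two paragraphs reproduce the paper's own proof almost exactly: the same recursion and closed form for $|\Aut(t)|$, the same toll function $f(t)=\log\bigl(\Prob(\xi=\rho(t))\rho(t)!\bigr)-\log(m_1!\cdots m_{k_t}!)$, and the same verification of the hypotheses of Theorem~\ref{thm:janson}, namely the squeeze $0\le\sum_i\log m_i!\le\log\rho(t)!$, the bound $\log m!\le m\log m$ together with $\mathbb{V}(\xi)<\infty$, and the root-degree estimate $\Prob(\rho(T_k)=m)\le cm\Prob(\xi=m)$ from \cite{janson05} giving $|\Ex(f(T_k))|=O(\log k)$. This part is correct and is the paper's argument.

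Where you go beyond the paper is in demanding a proof that $\lambda<0$ strictly. You are right that this does not follow from Theorem~\ref{thm:janson}; in fact the paper's proof simply sets $\lambda=\Ex(f(T))$ and stops, so you have identified a point the paper leaves implicit, and one that matters, since Theorem~\ref{thm:simply-generated-unordered} feeds $C_2=-\lambda$ into condition (C2), which requires a \emph{positive} constant. Your observations that $W(t)$ equals the probability that the unconditioned tree is isomorphic to $t$ (hence $\lambda\le0$), and that $\beta=\Ex\bigl(\sum_i\log m_i!\bigr)>0$ because $\Ex(\xi)=1$ and $\Prob(\xi=0)>0$ force $\Prob(\xi=m)>0$ for some $m\ge2$, so that a root with $m$ leaf children occurs with positive probability, are both correct; together they settle the case $A\le H$ in your notation, and one checks that $A\le H$ holds for every concrete family treated in the paper (full binary, binary, $d$-ary, plane, labelled trees), so your rigorous case covers all of the stated applications.

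Your handling of the remaining case $A>H$, however, is not a proof, as you concede. The concrete flaw in the flip-site sketch is this: flipping a fixed local pattern at an arbitrary subset of the $ck$ sites produces $2^{ck}$ distinct \emph{plane} trees, but these need not be pairwise non-isomorphic as unordered trees. If the ambient tree has automorphisms permuting the flip sites --- and $|\Aut(T_k)|$ can be of size $e^{\Theta(k)}$ --- then distinct flip configurations collapse onto the same unordered shape; in the extreme where all sites are interchangeable, the unordered shape records only \emph{how many} sites were flipped, i.e.\ $ck+1$ outcomes rather than $2^{ck}$. Controlling this collapse is precisely the automorphism-counting problem the lemma is about, so the argument as sketched is circular. (Note also that the tempting identity expressing $-\lambda$ as a difference of the entropy of the root-branch multiset and the entropy of the unordered shape of the unconditioned tree is an $\infty-\infty$ expression for a critical offspring law and gives nothing.) In summary: your proposal is a faithful reconstruction of the paper's proof together with a partial repair of a gap the paper glosses over, but the strict negativity of $\lambda$ for a general critical, finite-variance offspring distribution remains unproved in your version.
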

\begin{proof}
As in the proof of Lemma~\ref{lem:plane-janson}, we aim to define a suitable additive functional. To this end, we need a recursive description of $|\Aut t|$, the size of the automorphism group. Let $\rho(t)$ again denote the degree of the root vertex of $t$, let $t_1,t_2,\ldots,t_{\rho(t)}$ be the root branches of a tree $t$, and let $m_1, m_2, \ldots, m_{k_{t}}$ denote the multiplicities of isomorphic branches of $t$ ($m_1+m_2+\cdots+m_{k_t} = \rho(t)$). Here we call two trees isomorphic if they are identical as unordered trees. That is, the $\rho(t)$ many subtrees rooted at the children of the root vertex fall into $k_{t}$ many different isomorphism classes, where $m_i$ of them belong to isomorphism class $i$, respectively.
Then we have
$$|\Aut(t)| = \prod_{j=1}^{\rho(t)} |\Aut(t_j)| \cdot \prod_{i=1}^{k_t} m_i!\,,$$
since an automorphism of $t$ acts as an automorphism within branches and also possibly permutes branches that are isomorphic. In fact, the whole structure of $\Aut(t)$ is well understood \cite{jordan1869}.
It follows from the recursion for $|\Aut(t)|$ that 
\begin{align*}
F(t)= \log\left(\frac{\nu(t)\prod_{v \in t}\deg(v)!}{|\Aut(t)|}\right)
\end{align*}
(well-defined for all $t$ with $\nu(t) > 0$) is the additive functional associated with the toll function $f$ that is defined by
\begin{equation}\label{eq:toll_unordered}
f(t)=\begin{cases}\log(\Prob(\xi=\rho(t))\rho(t)!)-\log(m_1!m_2! \cdots m_{k_t}!) \quad &\text{if }\Prob(\xi=\rho(t))>0,\\
0 &\text{otherwise.}
\end{cases}
\end{equation}
Let $M =\{m \geq 0 \mid \Prob(\xi=m)>0\}$, and 
let $T$ be the unconditioned Galton--Watson tree corresponding to $\xi$. Since
$$0 \leq \log(\rho(t)!)-\log(m_1!m_2! \cdots m_{k_t}!) \leq \log(\rho(t)!),$$
we have
\begin{align*}
\Ex(|f(T)|) \leq \sum_{m \in M}\Prob(\xi=m)|\log(\Prob(\xi=m))|+\sum_{m \in M}\Prob(\xi=m)|\log(m!)|.
\end{align*}
The first sum was shown to be finite earlier in \eqref{eq:expectation-f-finite}, and the second sum is finite as $\log(m!) = O(m^2)$ and $\mathbb{V}(\xi)<\infty$ by assumption. Moreover, we find
\begin{align*}
|\Ex(f(T_k))|\leq \sum_{m \in M \atop m\leq k}\Prob(\rho(T_k)=m)|\log(\Prob(\xi=m)m!)|.
\end{align*}
Again by result (2.7) in \cite{janson05}, there is a constant $c>0$ (independent of $k$ and $m$) such that
\begin{align*}
\Prob(\rho(T_k)=m)\leq cm\Prob(\xi=m)
\end{align*}
for all $m,k\geq 0$. We thus find 
\begin{align*}
|\Ex(f(T_k))|&\leq c\sum_{m \in M \atop m\leq k}m\Prob(\xi=m)|\log\left(\Prob(\xi=m)m!\right)|\\
&\leq c\sum_{m \in M}m\Prob(\xi=m)|\log\left(\Prob(\xi=m)\right)|+c\sum_{m \in M \atop m\leq k}m\Prob(\xi=m)\log\left(m!\right).
\end{align*}
The first sum was shown to be finite in \eqref{eq:toll-function-finite}. As $\log(m!)\leq m\log m$, we obtain for the second sum:
\begin{align*}
\sum_{m \in M \atop m\leq k}m\Prob(\xi=m)\log\left(m!\right)\leq \log k \sum_{m \in M}m^2\Prob(\xi=m) = O(\log k),
\end{align*}
as by assumption, $\Ex(\xi)=1$ and $\mathbb{V}(\xi)<\infty$.
In particular, we thus have $\Ex|f(T_k)|= O(\log k)$. Altogether, we find that the requirements of Theorem~\ref{thm:janson} are satisfied. Now set
\begin{align*}
\lambda=\Ex(f(T)).
\end{align*}
By Theorem~\ref{thm:janson}, the probability that
\begin{align*}
F(T_k)=\log\left(\frac{\nu(T_k)\prod_{v \in T_k}\deg(v)!}{|\Aut(T_k)|}\right)\leq (\lambda+\varepsilon)k
\end{align*}
holds tends to $1$ for every $\varepsilon >0$ as $k \to \infty$. Thus, the statement follows. 
\end{proof}

Additionally, we need the following result on the number of unordered trees with vertex degrees from some given set $M \subseteq \mathbb{N}$:
\begin{theo}[\hspace{1sp}{{\cite[pp.~71-72]{FlajoletS09}}}]\label{thm:growth-unordered}
Let $M \subseteq \mathbb{N}$ with $0 \in M$, and let $u_k^M$ denote the number of unordered rooted trees $t$ of size $k$ with the property that the outdegree of every vertex in $t$ lies in $M$. Then
\begin{align*}
u_k^M \sim a_M \cdot \frac{b_M^k}{k^{3/2}}
\end{align*}
if $k \equiv 1 \mod d$, where $d$ is the greatest common divisor of all elements of $M$, and $u_k=0$ otherwise, where the constants $a_M,b_M$ depend on $M$. 
\end{theo}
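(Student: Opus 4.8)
The plan is to set up and analyze the ordinary generating function of unordered rooted trees with outdegrees restricted to $M$ by singularity analysis, following the classical approach of Pólya and Otter. Let $U(x)=\sum_{k\ge 0}u_k^M x^k$. Because the tree is unordered, the branches hanging off the root form a \emph{multiset} of trees whose cardinality must lie in $M$; the multiset construction of size $m$ is encoded by the cycle index $Z_{S_m}$ of the symmetric group, evaluated through the Pólya substitution $s_i\mapsto U(x^i)$. Accounting for the root with a factor $x$, one obtains the functional equation
\begin{equation*}
U(x)=x\sum_{m\in M}Z_{S_m}\big(U(x),U(x^2),\ldots,U(x^m)\big).
\end{equation*}
First I would verify that $U$ has a finite, positive radius of convergence $\rho\in(0,1)$: under the natural nondegeneracy assumption that $M$ contains an element $\ge 2$ (so that genuine branching occurs and the family grows exponentially), both finiteness and $\rho<1$ follow from the recursive structure of the equation.

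The crucial structural observation is that for each $i\ge 2$ the function $x\mapsto U(x^i)$ has radius of convergence $\rho^{1/i}>\rho$, so all the higher Pólya terms $U(x^2),U(x^3),\ldots$ are \emph{analytic} in a neighborhood of the dominant singularity $x=\rho$. I would therefore rewrite the equation as $U(x)=G(x,U(x))$ with
\begin{equation*}
G(x,u)=x\sum_{m\in M}Z_{S_m}\big(u,U(x^2),U(x^3),\ldots\big),
\end{equation*}
where $G$ is now an analytic function of its two arguments near $(\rho,\tau)$ with nonnegative Taylor coefficients, the higher terms being treated as a known analytic perturbation. This reduces the problem to the smooth implicit-function schema: I would check that the characteristic system $\tau=G(\rho,\tau)$, $1=\partial_u G(\rho,\tau)$ has a positive solution with $\tau=U(\rho)<\infty$ (the singularity is a branch point produced by $1-\partial_u G$ vanishing, not by $U$ blowing up), together with the relevant positivity and aperiodicity conditions. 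The conclusion of the schema is a square-root singularity,
\begin{equation*}
U(x)=\tau-c\,\sqrt{1-x/\rho}\,(1+o(1))\qquad(x\to\rho),
\end{equation*}
for a positive constant $c$.

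With the square-root singularity in hand, I would apply the transfer theorem of singularity analysis to extract
\begin{equation*}
u_k^M=[x^k]U(x)\sim\frac{c}{2\sqrt{\pi}}\,\rho^{-k}k^{-3/2},
\end{equation*}
which yields the claimed form with $b_M=\rho^{-1}$ and $a_M=c/(2\sqrt{\pi})$. Finally, the periodicity must be addressed: writing $d=\gcd(M)$, a tree with all outdegrees in $M$ satisfies $\sum_v\deg(v)=k-1$, so $k-1$ is a nonnegative combination of elements of $M$ and hence $k\equiv 1\pmod d$, forcing $u_k^M=0$ otherwise; correspondingly $U$ has $d$ equally dominant singularities on $|x|=\rho$, and the transfer theorem delivers the stated asymptotics precisely along the residue class $k\equiv 1\pmod d$ (after the standard substitution $U(x)=x\,\widetilde U(x^d)$ reducing to the aperiodic case). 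I expect the main obstacle to be the rigorous verification that the functional equation meets the hypotheses of the square-root singularity theorem---specifically, confirming that $\tau=U(\rho)$ is finite and that the singularity is genuinely of square-root type---since the Pólya terms $U(x^i)$ make $G$ only implicitly available; the saving grace is exactly their analyticity past $\rho$, which lets one import the standard simply-generated analysis essentially verbatim.
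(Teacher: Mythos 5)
Your proposal is correct and is essentially the proof the paper relies on: the theorem is imported from Flajolet--Sedgewick, whose argument is exactly the P\'olya functional equation for the restricted multiset construction, the observation that the terms $U(x^i)$, $i\ge 2$, are analytic beyond the dominant singularity $\rho<1$, reduction to the smooth implicit-function (square-root singularity) schema, and transfer. You also correctly note the implicit nondegeneracy hypothesis that $M$ contains an element $\ge 2$ (needed both for $\rho<1$ and for the $k^{-3/2}$ form), which is automatic in the paper's applications since $M$ is the support of a critical offspring distribution with positive variance.
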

Again for the sake of simplicity, we assume that $d=1$ holds for all families of trees considered in the following.
We are now able to derive a theorem on the number of distinct unordered trees represented by the fringe subtrees of a random tree of size $n$ drawn from a simply generated family of trees.

\begin{theo}\label{thm:simply-generated-unordered}
Let $K_n$ denote the total number of distinct unordered trees represented by the fringe subtrees of a random tree $T_n$ of size $n$ drawn from a simply generated family of trees $\mathcal{F}$ with weight sequence $(\phi_k)_{k \geq 0}$, and let $\Phi(x)=\sum_{m \geq 0}\phi_mx^m$. Let $R$ denote the radius of convergence of $\Phi$ and suppose that there exists $\tau \in (0,R]$ with $\tau\Phi'(\tau)=\Phi(\tau)$. Moreover, suppose that the offspring distribution $\xi$ of the Galton--Watson process corresponding to $\mathcal{F}$ satisfies $\mathbb{V}(\xi)=\sigma^2<\infty$. Set $\kappa=2\tau^{-1}(\Phi(\tau))^{1/2}(2\pi\Phi''(\tau))^{-1/2}$. Furthermore, let $M =\{m \in \mathbb{N} \mid \phi_m>0\}$ and set $C_1 =\log(b_M)$, where $b_M$ is the constant in Theorem~\ref{thm:growth-unordered}, and $C_2 =-\lambda$, where $\lambda$ is the constant in Lemma~\ref{lem:unordered-janson}. Then
\begin{itemize}
\item[(i)]$\displaystyle \kappa \sqrt{C_2} \frac{n}{\sqrt{\log n}}(1+o(1)) \leq \Ex(K_n) \leq \kappa \sqrt{C_1} \frac{n}{\sqrt{\log n}}(1+o(1)),$
\item[(ii)]$\displaystyle \kappa \sqrt{C_2} \frac{n}{\sqrt{\log n}}(1+o(1)) \leq K_n \leq \kappa \sqrt{C_1} \frac{n}{\sqrt{\log n}}(1+o(1))$ with high probability.
\end{itemize}
\end{theo}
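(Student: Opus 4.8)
The plan is to verify the two conditions (C1) and (C2) of Theorem~\ref{thm:master-theorem-simply-generated} for the partition of $\mathcal{F}_k$ into isomorphism classes where two trees are considered the same if and only if their unordered representations coincide; the stated bounds on $\Ex(K_n)$ and $K_n$ then follow immediately by instantiating the master theorem with the computed constants $C_1$ and $C_2$, and with the variance $\mathbb{V}(\xi)=\tau^2\Phi''(\tau)/\Phi(\tau)$ from \eqref{eq:xi-finite-variance} (which fixes the value of $\kappa$). This is exactly the structure of the proofs of Theorem~\ref{thm:ordered-simply-generated} and Theorem~\ref{thm:plane-simply-generated}, so the work is entirely in identifying $|\mathcal{I}_k|$ and the maximal class probability.

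First I would establish (C1). Here $|\mathcal{I}_k|$ is precisely the number of unordered rooted trees of size $k$ whose vertex outdegrees all lie in $M=\{m\mid\phi_m>0\}$, since a fringe subtree of a tree in $\mathcal{F}$ can only use degrees for which $\phi_m>0$. By Theorem~\ref{thm:growth-unordered}, this count is asymptotically $a_M b_M^k/k^{3/2}$, so $\log|\mathcal{I}_k| = \log(b_M)\,k + o(k)$, giving $C_1=\log(b_M)$ as claimed. This direction is routine once one observes that the relevant degree-restricted unordered tree count is exactly what Theorem~\ref{thm:growth-unordered} enumerates.

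Next I would establish (C2). I would define $\mathcal{J}_k\subseteq\mathcal{I}_k$ to consist of those isomorphism classes whose plane representatives $t$ satisfy $\nu(t)\prod_{v\in t}\deg(v)!/|\Aut(t)| \le e^{(\lambda+\varepsilon_k)k}$, where $\varepsilon_k\to0$ is chosen exactly as in the proof of Theorem~\ref{thm:plane-simply-generated} (extracting a sequence $k_j$ from Lemma~\ref{lem:unordered-janson} and setting $\varepsilon_k=\min\{1/j\mid k_j\le k\}$). Since the quantity $\nu(t)\prod_{v}\deg(v)!/|\Aut(t)|$ is the total weight of all plane embeddings of the unordered class of $t$, it equals the $P_\xi$-probability (up to the normalizing denominator) that a conditioned Galton--Watson tree of size $k$ has that unordered shape; hence (C2a) is precisely the high-probability conclusion of Lemma~\ref{lem:unordered-janson}. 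For (C2b), the probability that a random tree of size $k$ lies in a fixed class $I\in\mathcal{J}_k$ is
\begin{align*}
\frac{\nu(t)\prod_{v\in t}\deg(v)!/|\Aut(t)|}{\sum_{t'\in\mathcal{T}_k}\nu(t')},
\end{align*}
and I would bound the numerator by $e^{(\lambda+\varepsilon_k)k}$ and the denominator from below using the estimate \eqref{eq:sum-nu} for $\sum_{t'\in\mathcal{T}_k}\nu(t')=\Theta(k^{-3/2})$, exactly as in Theorem~\ref{thm:plane-simply-generated}. This yields a bound of the form $e^{\lambda k+o(k)}$, so (C2b) holds with $C_2=-\lambda$; note $\lambda<0$ guarantees $C_2>0$.

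The main obstacle is the correct identification and bookkeeping of the automorphism factor in (C2): one must argue carefully that $\nu(t)\prod_{v}\deg(v)!/|\Aut(t)|$ is genuinely the total $P_\xi$-weight of the entire isomorphism class of $t$ (summed over all plane embeddings), so that this single quantity controls the probability of the whole class $I$ rather than of one plane representative. This is the orbit-stabilizer bookkeeping sketched before Lemma~\ref{lem:unordered-janson}, and it is what makes Lemma~\ref{lem:unordered-janson} the right tool; everything else parallels the plane case. Once this identification is in hand, the remaining estimates are routine and the theorem follows directly from Theorem~\ref{thm:master-theorem-simply-generated}.
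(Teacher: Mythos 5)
Your proposal is correct and follows essentially the same route as the paper's own proof: (C1) via Theorem~\ref{thm:growth-unordered} with $C_1=\log(b_M)$, and (C2) by defining $\mathcal{J}_k$ through the threshold $\nu(t)\prod_{v}\deg(v)!/|\Aut(t)|\le e^{(\lambda+\varepsilon_k)k}$ from Lemma~\ref{lem:unordered-janson}, bounding the class probability with the orbit--stabilizer count and the estimate \eqref{eq:sum-nu}, and then invoking Theorem~\ref{thm:master-theorem-simply-generated}. The point you flag as the main obstacle --- that $\nu(t)\prod_{v}\deg(v)!/|\Aut(t)|$ is the total weight of the whole isomorphism class over all its plane embeddings --- is indeed exactly the identification the paper makes before Lemma~\ref{lem:unordered-janson} and uses in verifying (C2b).
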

\begin{proof}
Here we consider two trees as isomorphic if their unordered representations are identical. This yields a partition of $\mathcal{F}_k$ into isomorphism classes $\mathcal{I}_k$, for which we will verify that the conditions of Theorem~\ref{thm:master-theorem-simply-generated} are satisfied. 
The number $|\mathcal{I}_k|$ of isomorphism classes equals the number of all unordered trees of size $k$ with vertex degrees in $M$, which is given by Theorem~\ref{thm:growth-unordered}: we have
\begin{align*}
\log(|\mathcal{I}_k|)=\log(b_M)k(1+o(1)).
\end{align*}
Hence, condition (C1) is satisfied with $C_1=\log(b_M)$.
Note that if two plane trees $t, t' \in \mathcal{T}$ have the same unordered representation, we have $\nu(t)=\nu(t')$, $\prod_{v \in t}\deg(v)!=\prod_{v \in t'}\deg(v)!$ and $|\Aut(t)|=|\Aut(t')|$ (it is thus well-defined to define $\nu(u)=\nu(t)$ for a plane tree $t \in \mathcal{T}$ and its unordered representation $u$). As in the proof of Theorem~\ref{thm:plane-simply-generated}, we can now use Lemma~\ref{lem:unordered-janson} to show that there exists a sequence $\varepsilon_k$ that tends to $0$ as $k \to \infty$ with the property that
$$\Prob\Big(\nu(T_k) \frac{\prod_{v \in T_k}\deg(v)!}{|\Aut(T_k)|} \leq e^{(\lambda+\varepsilon_k)k} \Big) \geq 1 - \varepsilon_k.$$
So let $\mathcal{J}_k\subseteq \mathcal{I}_k$ denote the subset of isomorphism classes of trees in $\mathcal{F}_k$ such that the trees $t$ that they represent satisfy
\begin{align*}
\nu(t)\frac{\prod_{v \in t}\deg(v)!}{|\Aut(t)|}\leq e^{(\lambda+\varepsilon_k)k}.
\end{align*}
The probability that a random tree of size $k$ drawn from $\mathcal{F}_k$ lies in an isomorphism class that belongs to the set $\mathcal{J}_k$ is precisely the probability that a conditioned Galton--Watson tree $T_k$ of size $k$ with offspring distribution $\xi$ satisfies
\begin{align*}
\nu(T_k)\frac{\prod_{v \in T_k}\deg(v)!}{|\Aut(T_k)|}\leq e^{(\lambda+\varepsilon_k)k},
\end{align*}
which is at least $1 - \varepsilon_k$ by construction. Thus condition (C2a) is satisfied. 

Now let $I \in \mathcal{J}_k$ be a single isomorphism class, and let $u$ be the unordered tree that it represents.
The probability that a random tree in $\mathcal{F}$ of size $k$ lies in the isomorphism class $I$ is
\begin{align*}
\frac{\nu(u)}{\sum_{t\in \mathcal{T}_k}\nu(t)}\frac{\prod_{v \in u}\deg(v)!}{|\Aut(u)|},
\end{align*}
since $\prod_{v \in t}\deg(v)!/|\Aut(t)|$ equals the number of plane representations of the tree $u$, each of which has probability $\nu(u)$.
As explained in the proof of Theorem~\ref{thm:plane-simply-generated} (see \eqref{eq:sum-nu}), we have
\begin{align*}
\sum_{t \in \mathcal{T}_k}\nu(t)=\sqrt{\frac{\Phi(t)}{2\pi\tau^2\Phi''(\tau)}}k^{-3/2}(1+O(k^{-1})).
\end{align*}
Thus, the probability that a random tree in $\mathcal{F}$ of size $k$ lies in a single isomorphism class $I \in \mathcal{J}_k$ is never greater than
\begin{align*}
\sqrt{\frac{2\pi\tau^2\Phi''(\tau)}{\Phi(\tau)}}k^{3/2}e^{(\lambda+\varepsilon_k)k}(1+O(k^{-1})) = e^{\lambda k + o(k)}.
\end{align*}
So condition (C2b) is satisfied as well, with $C_2 = -\lambda$. Theorem~\ref{thm:simply-generated-unordered} now follows directly from Theorem~\ref{thm:master-theorem-simply-generated}.
\end{proof}

In order to obtain bounds on the number $K_n$ of distinct unordered trees represented by the fringe subtrees of a random tree $T_n$ drawn from some concrete family of trees, we need to determine the values of the constants  $\lambda$ and $b_M$ in Lemma~\ref{lem:unordered-janson} and Theorem~\ref{thm:growth-unordered} for the particular family of trees. For the family of binary trees, these values follow from known results. 
The number of unordered rooted trees of size $k$ with vertex degrees in $M=\{0,1,2\}$ is given by the $(k+1)$st \emph{Wedderburn-Etherington number} $W_{k+1}$. The asymptotic growth of these numbers is
\begin{align*}
W_k \sim a_M \cdot k^{-3/2}\cdot b_M^k,
\end{align*}
for certain positive constants $a_M, b_M$ \cite{bonaf09, finch03}. In particular, we have $b_M \approx 2.4832535363$. 

In order to determine a concrete value for the constant $\lambda$ in Lemma~\ref{lem:unordered-janson} for the family of binary trees, we make use of a theorem by B\'{o}na and Flajolet \cite{bonaf09} on the number of automorphisms of a uniformly random full binary tree: a \emph{full binary tree} is a binary tree where each vertex has either exactly two or zero descendants, i.e., there are no unary vertices. Note that every full binary tree with $2k-1$ vertices consists of $k$ leaves and $k-1$ binary vertices, thus it is often convenient to define the size of a full binary tree as its number of leaves. 
The following theorem is stated for phylogenetic trees in \cite{bonaf09}, but the two probabilistic models are equivalent:

\begin{theo}[{see \cite[Theorem 2]{bonaf09}}]\label{thm:bona-flajolet}
Consider a uniformly random full binary tree $T_k$ with $k$ leaves, and let $|\Aut(T_k)|$ be the cardinality of its automorphism group. The logarithm of this random variable satisfies a central limit theorem: For certain positive constants $\gamma$ and $\beta$, we have
\begin{align*}
\Prob(|\Aut(T_k)| \leq 2^{\gamma k + \beta \sqrt{k} x}) \overset{k \to \infty}{\to} \frac{1}{\sqrt{2\pi}} \int_{-\infty}^x e^{-t^2/2}\,dt
\end{align*}
for every real number $x$. The numerical value of the constant $\gamma$ is $0.2710416936$. 
\end{theo}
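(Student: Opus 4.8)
The plan is to reduce the statement to a central limit theorem for a combinatorial shape statistic and then to establish that CLT by singularity analysis of a bivariate generating function, in the quasi-powers paradigm of Flajolet--Sedgewick.

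First I would translate $\lvert\Aut(T_k)\rvert$ into a shape functional. By the recursion for the automorphism group recalled just before Lemma~\ref{lem:unordered-janson}, a full binary tree $t$ with root branches $t_1,t_2$ satisfies $\lvert\Aut(t)\rvert = \lvert\Aut(t_1)\rvert\,\lvert\Aut(t_2)\rvert\cdot 2^{[t_1\cong t_2]}$, where $[t_1\cong t_2]$ equals $1$ precisely when the two branches have the same unordered shape. Hence $s(t):=\log_2\lvert\Aut(t)\rvert$ is exactly the number of \emph{symmetric nodes} of $t$ (internal vertices whose two branches are isomorphic), and it suffices to prove that $s(T_k)$ is asymptotically normal with mean $\gamma k+o(k)$ and standard deviation $\beta\sqrt k\,(1+o(1))$. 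I would also record that the uniform model assigns each unordered shape $\sigma$ a probability proportional to $2^{-s(\sigma)}$ (equivalently, to its number of plane embeddings $2^{\,k-1-s(\sigma)}$), which is what makes it coincide with the phylogenetic/PDA model.

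Next I would set up the bivariate generating function $T(z,u)=\sum_\sigma z^{|\sigma|}u^{s(\sigma)}$ over unordered full binary shapes, with $z$ marking the number of leaves. The P\'olya decomposition of a tree into an unordered pair of branches, the equal-branch diagonal contributing one symmetric node, yields
\[
T(z,u) = z + \tfrac12\bigl(T(z,u)^2 - T(z^2,u^2)\bigr) + u\,T(z^2,u^2).
\]
Since the uniform distribution weights shapes by $2^{-s}$, the probability generating function of $s(T_k)$ is a ratio of coefficients,
\[
\Ex\bigl(u^{s(T_k)}\bigr) = \frac{[z^k]\,T(z,u/2)}{[z^k]\,T(z,1/2)},
\]
so everything reduces to the asymptotics of $[z^k]T(z,v)$ for $v$ near $v=\tfrac12$. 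Treating $g(z,v):=T(z^2,v^2)$ as a known function (its radius of convergence in $z$ is the square root of that of $T(\cdot,v)$, hence it is analytic at the dominant singularity), the equation becomes a quadratic with the explicit solution
\[
T(z,v) = 1 - \sqrt{\,1 - 2z - (2v-1)\,g(z,v)\,},
\]
which reduces at $v=\tfrac12$ to $1-\sqrt{1-2z}$ with dominant singularity $\rho(\tfrac12)=\tfrac12$, matching the Catalan normalization. This exhibits a square-root singularity at the point $\rho(v)$ where the radicand vanishes; by the implicit function theorem $\rho(v)$ is analytic near $v=\tfrac12$, and singularity analysis gives $[z^k]T(z,v)\sim C(v)\,\rho(v)^{-k}k^{-3/2}$ uniformly in $v$ near $\tfrac12$. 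Dividing, $\Ex(u^{s(T_k)})\sim \tfrac{C(u/2)}{C(1/2)}\bigl(\rho(1/2)/\rho(u/2)\bigr)^{k}$, a quasi-power. Hwang's quasi-powers theorem then delivers the Gaussian limit law, with $\gamma$ and $\beta^2$ the standard expressions in $\rho$ and its first two derivatives at $v=\tfrac12$; differentiating the defining equation of $\rho$ gives $\rho'(\tfrac12)=-T(\tfrac14,\tfrac14)$, whence $\gamma=T(\tfrac14,\tfrac14)/\ln 2$, and evaluating this constant produces the stated numerical value $\gamma\approx 0.2710416936$.

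The hard part will be the analytic bookkeeping needed to apply the transfer theorems uniformly: verifying that the square-root (smooth implicit-function) schema genuinely holds, i.e.\ that $g(z,v)$ remains analytic past $\rho(v)$ so the singularity is of pure square-root type, that $\rho(v)$ is the \emph{unique} dominant singularity depending analytically on $v$ throughout a full complex neighbourhood of $v=\tfrac12$, and, crucially, checking the \emph{variability condition}, namely that the limiting variance constant $\beta^2$ is strictly positive so that the limit is a non-degenerate Gaussian. Aperiodicity causes no difficulty here, since full binary trees exist for every leaf-number $k$, so $T(z,v)$ is aperiodic in $z$. (An alternative route via a general central limit theorem for additive functionals of conditioned Galton--Watson trees, or via the contraction method, would also give normality, but the generating-function argument is what produces the explicit constant $\gamma$.)
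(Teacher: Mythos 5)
The paper itself offers no proof of this statement: it is quoted directly from B\'ona and Flajolet \cite[Theorem 2]{bonaf09}, and the only original content in the surrounding text is the observation (which you also make) that the uniform distributions on full binary trees and on phylogenetic trees induce the same distribution on unordered shapes, both being proportional to $2^{-s(\sigma)}$, where $s(\sigma)=\log_2\lvert\Aut(\sigma)\rvert$ counts the symmetric internal nodes. Your sketch is in substance the proof from \cite{bonaf09}: reduce to a central limit theorem for the number of symmetric nodes, encode that statistic in a bivariate generating function satisfying a P\'olya-type quadratic, and run the square-root-singularity/quasi-powers machinery. The functional equation, the reduction $\Ex(u^{s(T_k)})=[z^k]T(z,u/2)/[z^k]T(z,1/2)$, the closed form $T(z,v)=1-\sqrt{1-2z-(2v-1)T(z^2,v^2)}$, and the computation $\rho'(1/2)=-T(1/4,1/4)$ are all correct, and you have correctly isolated where the real analytic work lies (analytic continuation of $T(z^2,v^2)$ beyond $\rho(v)$, uniformity in $v$, and strict positivity of $\beta^2$).

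One concrete slip: the constant should be $\gamma=T(1/4,1/4)$, not $T(1/4,1/4)/\log 2$. Since $\lvert\Aut(T_k)\rvert=2^{s(T_k)}$ exactly and the exponent in the theorem is already written in base $2$, the event $\lvert\Aut(T_k)\rvert\leq 2^{\gamma k+\beta\sqrt{k}x}$ is literally the event $s(T_k)\leq \gamma k+\beta\sqrt{k}x$, so $\gamma$ is the linear growth constant of $s(T_k)$ itself, namely $-\rho'(1/2)=T(1/4,1/4)$; no change of logarithm base enters anywhere. A quick numerical check confirms this: summing the first few terms of $\sum_\sigma (1/4)^{\lvert\sigma\rvert}(1/4)^{s(\sigma)}$ already gives $0.2709\ldots$, converging to the stated $0.2710416936$, whereas dividing by $\log 2$ would give approximately $0.391$.
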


The simply generated family of full binary trees corresponds to the weight sequence with $\phi_0=\phi_2=1$ and $\phi_j=0$ for $j \notin \{0,2\}$. The corresponding offspring distribution $\xi_1$ satisfies $\Prob(\xi_1=0)=\Prob(\xi_1=2)=1/2$. Let $t$ denote a (plane representation of) a full binary tree of size $n = 2k-1$, with $k$ leaves and $k-1$ internal vertices. Then $\nu_{\xi_1}(t)=2^{-2k+1}$ and $\prod_{v \in t}\deg(v)!=2^{k-1}$, and consequently
$$\nu_{\xi_1}(T_k)\frac{\prod_{v \in T_k}\deg(v)!}{|\Aut (T_k)|} = \frac{1}{2^k |\Aut(T_k)|}$$
for a random full binary tree $T_k$ with $k$ leaves. It follows from Theorem~\ref{thm:bona-flajolet} that
$$\frac{1}{2k-1} \log \Big( \nu_{\xi_1}(T_k)\frac{\prod_{v \in T_k}\deg(v)!}{|\Aut (T_k)|} \Big) \overset{p}{\to} - \frac{(1+\gamma) \log 2}{2},$$
thus $\lambda = - \frac{(1+\gamma)\log 2}{2} \approx -0.4405094831$ in this special case. As the numbers of unordered rooted trees with vertex degrees in $M=\{0,2\}$ are counted by the \emph{Wedderburn-Etherington numbers} as well \cite{bonaf09}, we obtain the following corollary from Theorem \ref{thm:simply-generated-unordered}:

\begin{corollary}
Let $K_n$ denote the number of distinct unordered trees represented by the fringe subtrees of a uniformly random full binary tree with $n$ leaves. Then for $c_1 \approx 1.0591261434$ and $c_2 \approx 1.0761505454$, we have 
\begin{itemize}
\item[(i)]$\displaystyle c_1 \frac{n}{\sqrt{\log n}}(1+o(1))\leq \Ex(K_n) \leq c_2 \frac{n}{\sqrt{\log n}}(1+o(1))$,
\item[(ii)]$\displaystyle c_1 \frac{n}{\sqrt{\log n}}(1+o(1))\leq K_n \leq c_2\frac{n}{\sqrt{\log n}}(1+o(1))$ with high probability.
\end{itemize}
\end{corollary}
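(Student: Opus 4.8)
The plan is to apply Theorem~\ref{thm:simply-generated-unordered} to the simply generated family of full binary trees and then translate the resulting estimate, which is phrased in terms of the number of vertices, into one phrased in terms of the number of leaves. First I would record that the full binary trees form the simply generated family with weight sequence $\phi_0=\phi_2=1$ and $\phi_j=0$ for $j\notin\{0,2\}$, so that $\Phi(x)=1+x^2$. The equation $\tau\Phi'(\tau)=\Phi(\tau)$ reads $2\tau^2=1+\tau^2$ and is solved by $\tau=1$; the associated offspring distribution is $\Prob(\xi=0)=\Prob(\xi=2)=\tfrac12$, which is critical with $\mathbb{V}(\xi)=\tau^2\Phi''(\tau)/\Phi(\tau)=1<\infty$. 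Hence the hypotheses of Theorem~\ref{thm:simply-generated-unordered} are met, with the caveat that the support $M=\{0,2\}$ has greatest common divisor $2$, so that one invokes the $d\neq1$ variant of the underlying counting results as remarked after Theorem~\ref{thm:growth-unordered}. A direct substitution gives $\kappa=2\tau^{-1}(\Phi(\tau))^{1/2}(2\pi\Phi''(\tau))^{-1/2}=\sqrt{2/\pi}$.

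Next I would pin down the two exponential constants. The value $C_2=-\lambda$ has in effect already been extracted in the discussion preceding the corollary: writing $T_k$ for a uniformly random full binary tree with $k$ leaves (hence $2k-1$ vertices), Theorem~\ref{thm:bona-flajolet} yields $\tfrac{1}{2k-1}\log\bigl(\nu_{\xi_1}(T_k)\prod_{v}\deg(v)!/|\Aut(T_k)|\bigr)\overset{p}{\to}-\tfrac{(1+\gamma)\log 2}{2}$, so that $C_2=-\lambda=\tfrac{(1+\gamma)\log 2}{2}$. For $C_1=\log b_M$ I would use that the unordered trees of size $k$ with all out-degrees in $M=\{0,2\}$ are exactly the unordered full binary trees, which are counted by the Wedderburn--Etherington numbers; a tree with $k$ vertices has $(k+1)/2$ leaves, so if $W_j\sim a\,j^{-3/2}w^{j}$ with $w\approx2.4832535363$ denotes their asymptotics, then the per-vertex growth rate is $b_M=w^{1/2}$ and $C_1=\tfrac12\log w$. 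The key point here is the normalisation: both $\lambda$ and $b_M$ must be measured per vertex, matching the convention of Theorem~\ref{thm:simply-generated-unordered}.

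Finally I would carry out the change of variable from vertices to leaves. Theorem~\ref{thm:simply-generated-unordered} describes a random full binary tree of size $N$; a tree with $n$ leaves has $N=2n-1$ vertices, so that $N\sim 2n$ and $\log N\sim\log n$, giving $\kappa\sqrt{C_i}\,\tfrac{N}{\sqrt{\log N}}(1+o(1))=2\kappa\sqrt{C_i}\,\tfrac{n}{\sqrt{\log n}}(1+o(1))$. This produces $c_1=2\kappa\sqrt{C_2}=2\sqrt{(1+\gamma)(\log 2)/\pi}\approx1.0591$ and $c_2=2\kappa\sqrt{C_1}=2\sqrt{(\log w)/\pi}\approx1.0762$, and both the expectation statement and the high-probability statement are inherited directly from the corresponding parts of Theorem~\ref{thm:simply-generated-unordered}. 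I expect no serious analytic obstacle here, since all the heavy lifting is done by the cited theorems; the only real care is in the bookkeeping of the factor $2$ coming from the leaf-versus-vertex count and in confirming that the constants $\gamma$ and $w$ enter with the correct per-vertex normalisation so that the numerics reproduce $c_1$ and $c_2$.
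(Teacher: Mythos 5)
Your proposal is correct and follows essentially the same route as the paper: identify full binary trees as the simply generated family with $\Phi(x)=1+x^2$, extract $\lambda$ from Theorem~\ref{thm:bona-flajolet} via the quantity $\nu(T_k)\prod_v\deg(v)!/|\Aut(T_k)|=2^{-k}/|\Aut(T_k)|$, take $C_1$ from the Wedderburn--Etherington asymptotics, and apply Theorem~\ref{thm:simply-generated-unordered}. You are in fact more explicit than the paper about the two normalisation points it leaves implicit --- that $b_M$ for $M=\{0,2\}$ is the square root of the Wedderburn--Etherington growth constant when measured per vertex, and that the leaf-versus-vertex count contributes the factor $2$ --- and both are handled correctly, reproducing $c_1$ and $c_2$.
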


In order to obtain a corresponding result for binary trees rather than full binary trees, we observe that as every full binary tree with $k$ leaves has exactly $k-1$ internal vertices, there is a natural one-to-one correspondence between the set of full binary trees with $k$ leaves and the set of binary trees with $k-1$ vertices. Let $\vartheta(t)$ denote the binary tree of size $k-1$ obtained from a full binary tree $t$ with $k$ leaves by removing the leaves of $t$ and only keeping the internal vertices of $t$. Then $\vartheta$ is a bijection between the set of full binary trees with $k$ leaves and the set of binary trees of size $k-1$ for every $k \geq 2$. Fringe subtrees of $t$ correspond to fringe subtrees of $\vartheta(t)$ and vice verca, except for the leaves of $t$. Thus $t$ and $\vartheta(t)$ have almost the same number of non-isomorphic fringe subtrees (the difference is exactly $1$). If $T_k$ is a uniformly random full binary tree with $k$ leaves, then $\vartheta(T_k)$ is a uniformly random binary tree of size $k-1$. Hence, in view of this correspondence between binary trees and full binary trees, Theorem~\ref{thm:unordered-binary} follows.

As another example, we take the family of labelled trees. Here, we have $M = \{0,1,2,\ldots\}$, and the number of isomorphism classes is the number of \emph{P\'olya trees} (rooted unordered trees), which follows the same kind of asymptotic formula as the Wedderburn-Etherington numbers above, with a growth constant $b_M \approx 2.9557652857$, see \cite{Otter1948}, \cite[Section VII.5]{FlajoletS09} or \cite[Section 5.6]{finch03}. This gives us immediately the value of $C_1 = \log(b_M)$.

The number of automorphisms satisfies a similar central limit theorem as in Theorem~\ref{thm:bona-flajolet}, with a constant $\gamma \approx 0.0522901096$ (and $k$ being the number of vertices rather than the number of leaves), see \cite{BIRStalk}. Since the expression $\log(\Prob(\xi=\rho(t))\rho(t)!)$ in~\eqref{eq:toll_unordered} simplifies to $-1$ for every value of $\rho(t)$ in the case of labelled trees, we have $\lambda = - 1 - \gamma$ and thus $C_2 = 1 + \gamma \approx 1.0522901096$. Finally, $\kappa = \sqrt{2/\pi}$ in this example. Putting everything together, we obtain

\begin{corollary}\label{cor:distinct_unordered_labelled}
Let $K_n$ denote the number of distinct unordered trees represented by the fringe subtrees of a uniformly random labelled tree with $n$ vertices. Then for $c_{11} \approx 0.8184794989$ and $c_{12} \approx 0.8306271816$, we have 
\begin{itemize}
\item[(i)]$\displaystyle c_{11} \frac{n}{\sqrt{\log n}}(1+o(1))\leq \Ex(K_n) \leq c_{12} \frac{n}{\sqrt{\log n}}(1+o(1))$,
\item[(ii)]$\displaystyle c_{11} \frac{n}{\sqrt{\log n}}(1+o(1))\leq K_n \leq c_{12}\frac{n}{\sqrt{\log n}}(1+o(1))$ with high probability.
\end{itemize}
\end{corollary}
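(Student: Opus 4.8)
The plan is to specialize Theorem~\ref{thm:simply-generated-unordered} to the family of labelled trees, for which the weight sequence is $\phi_k = 1/k!$ and hence $\Phi(x) = e^x$. First I would record the basic simply generated data: the equation $\tau\Phi'(\tau) = \Phi(\tau)$ is solved by $\tau = 1$, giving $\Phi(\tau) = \Phi''(\tau) = e$, so that the prefactor in Theorem~\ref{thm:simply-generated-unordered} evaluates to $\kappa = 2\tau^{-1}(\Phi(\tau))^{1/2}(2\pi\Phi''(\tau))^{-1/2} = \sqrt{2/\pi}$. It then remains only to pin down the two constants $C_1$ and $C_2$ that govern the upper and lower bounds.

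For $C_1$, I would use that $M = \{0,1,2,\dots\}$ in this case, so that the isomorphism classes counted by $|\mathcal{I}_k|$ are exactly the unordered rooted trees of size $k$, i.e., the \emph{P\'olya trees}. By Theorem~\ref{thm:growth-unordered} these grow like $a_M b_M^k k^{-3/2}$ with a known growth constant $b_M \approx 2.9557652857$, whence $C_1 = \log(b_M)$.

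The substantive step is the determination of $C_2 = -\lambda$, where $\lambda$ is the constant supplied by Lemma~\ref{lem:unordered-janson}. The simplification special to labelled trees is that the offspring distribution is Poisson, $\Prob(\xi = m) = (e\,m!)^{-1}$, so that $\log(\Prob(\xi = m)\,m!) = -1$ for every $m$; consequently the ``probability part'' of the toll function~\eqref{eq:toll_unordered} is identically $-1$, and summing it over all vertices shows that $\log\big(\nu(t)\prod_{v\in t}\deg(v)!\big)$ telescopes to $-|t|$. I would therefore rewrite the additive functional of Lemma~\ref{lem:unordered-janson} as
\begin{align*}
\log\Big(\frac{\nu(T_n)\prod_{v\in T_n}\deg(v)!}{|\Aut(T_n)|}\Big) = -n - \log|\Aut(T_n)|,
\end{align*}
which reduces the entire question to the asymptotic order of the automorphism group of a random P\'olya tree. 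Invoking the central limit theorem for $\log|\Aut(T_n)|$ (the P\'olya-tree analogue of Theorem~\ref{thm:bona-flajolet}, with growth constant $\gamma \approx 0.0522901096$) yields $n^{-1}\log|\Aut(T_n)| \to \gamma$ in probability, so that $\lambda = -(1+\gamma)$ and $C_2 = 1+\gamma$.

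Finally I would substitute $\kappa = \sqrt{2/\pi}$, $C_1 = \log(b_M)$ and $C_2 = 1+\gamma$ into the bounds of Theorem~\ref{thm:simply-generated-unordered}, reading off $c_{11} = \kappa\sqrt{C_2}$ and $c_{12} = \kappa\sqrt{C_1}$ with the stated numerical values. The only genuine difficulty is the last ingredient, namely the central limit theorem for the number of automorphisms of a random P\'olya tree; everything else is a direct evaluation of the constants appearing in Theorem~\ref{thm:simply-generated-unordered}. Note that $C_2 < C_1$ here (the lower and upper growth rates do not coincide), so, exactly as in the binary case, this approach delivers matching orders of magnitude but not a single sharp leading constant.
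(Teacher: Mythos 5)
Your proposal is correct and follows essentially the same route as the paper: $C_1 = \log b_M$ from the P\'olya-tree growth constant, $\kappa = \sqrt{2/\pi}$, and $C_2 = 1+\gamma$ via the observation that the Poisson offspring distribution makes the term $\log(\Prob(\xi=\rho(t))\rho(t)!)$ in the toll function identically $-1$, so that the additive functional collapses to $-n-\log|\Aut(T_n)|$. One small caveat: the central limit theorem you need is for the automorphism group of the shape of a uniformly random \emph{labelled} tree (equivalently, the conditioned Poisson Galton--Watson tree), not of a uniformly random P\'olya tree --- these are different distributions on unordered shapes, although the constant $\gamma \approx 0.0522901096$ you quote is indeed the one for the correct model and is exactly what the paper uses.
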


\section{Applications: increasing trees}

We now prove the analogues of the previous section for increasing trees by verifying that the conditions of Theorem~\ref{thm:master-theorem-increasing} are satisfied. Note that (C1) still holds in all cases for the same reasons as before. Only condition (C2) requires some effort.

Once again, we make use of results on additive functionals. For additive functionals of increasing trees with finite support, i.e., for functionals for which there exists a constant $K$ such that $f(t)=0$ whenever $|t|>K$, a central limit was proven in~\cite{HolmgrenJS17} and~\cite{RalaivaosaonaW19} (the latter even contains a slightly more general result). Those results do not directly apply to the additive functionals that we are considering here. However, convergence in probability is sufficient for our purposes. We have the following lemma:

\begin{lemma}\label{lem:additive_functionals_increasing}
Let $T_n$ denote a random tree with $n$ vertices from one of the very simple families of increasing trees (recursive trees, $d$-ary increasing trees, gports), and let $F$ be any additive functional with toll function $f$. As before, set $\alpha = 0$ for recursive trees, $\alpha = - \frac1{d}$ for $d$-ary increasing trees and $\alpha = \frac{1}{r}$ for gports. We have
$$\Ex(F(T_n)) = \Ex(f(T_n)) + \sum_{k = 1}^{n-1} \frac{((1+\alpha)n-\alpha)\Ex(f(T_k))}{((1+\alpha)k+1)((1+\alpha)k-\alpha)}.$$
Moreover, if $\Ex|f(T_n)| = o(n)$ and $\sum_{k=1}^{\infty} \frac{\Ex|f(T_k)|}{k^2} < \infty$, then we have
$$\frac{F(T_n)}{n} \overset{p}{\to} \mu = \sum_{k = 1}^{\infty} \frac{(1+\alpha) \Ex(f(T_k))}{((1+\alpha)k+1)((1+\alpha)k-\alpha)}.$$
\end{lemma}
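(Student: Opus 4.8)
The plan is to establish the exact mean formula first and then bootstrap it to convergence in probability via a truncation argument.

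For the first assertion, I would decompose the additive functional according to the size of the fringe subtree on which the toll function is evaluated, writing
$$F(T_n) = \sum_{k=1}^{n} \sum_{v \,:\, |T_n(v)| = k} f(T_n(v)) = f(T_n) + \sum_{k=1}^{n-1} \sum_{v \,:\, |T_n(v)| = k} f(T_n(v)),$$
since the only fringe subtree of size $n$ is $T_n$ itself. The key structural fact about the very simple families of increasing trees is \emph{self-similarity}: conditioned on having size $k$, a fringe subtree is distributed (as a shape) like a random tree of size $k$ from the same family; this is precisely the property underlying the formula for $\Ex(Z_{n,k})$ cited from \cite{Fuchs2012}. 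Hence $\Ex\bigl(f(T_n(v)) \mid |T_n(v)| = k\bigr) = \Ex(f(T_k))$ for every vertex $v$, and summing $\Prob(|T_n(v)| = k)$ over all vertices recovers $\Ex(Z_{n,k})$. Linearity of expectation then gives $\Ex(F(T_n)) = \Ex(f(T_n)) + \sum_{k=1}^{n-1}\Ex(Z_{n,k})\,\Ex(f(T_k))$, and substituting the value of $\Ex(Z_{n,k})$ from Lemma~\ref{lemma:increasing-expectation-variance} yields the claimed formula.

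For the limit, I would first check convergence of the rescaled mean. Dividing the formula by $n$, the term $\Ex(f(T_n))/n$ vanishes because $\Ex|f(T_n)| = o(n)$, while in the remaining sum the coefficient $((1+\alpha)n - \alpha)/n$ tends to $1+\alpha$ and is bounded by a constant multiple of $k^{-2}$ uniformly in $n$. Since $\sum_k \Ex|f(T_k)|/k^2 < \infty$, dominated convergence for series lets me pass to the limit term by term, giving $\Ex(F(T_n))/n \to \mu$.

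Convergence in probability is the main obstacle, since the variance of the full functional $F(T_n)$ is not easy to control directly. Here I would truncate: for a fixed threshold $K$ write $f = f_K + f^K$ with $f_K = f \cdot \mathbf{1}_{|t|\le K}$, and let $F_K, F^K$ be the corresponding additive functionals. The functional $F_K$ has a finite-support toll function, so the central limit theorems of \cite{HolmgrenJS17} and \cite{RalaivaosaonaW19} apply and give $F_K(T_n)/n \overset{p}{\to} \mu_K$, where $\mu_K$ is the partial sum of the series for $\mu$ up to $k = K$; by the previous paragraph $\mu_K \to \mu$ as $K \to \infty$. For the tail $F^K$, the mean formula applied to $|f^K|$ together with the bound $\Ex(Z_{n,k}) \le C n/k^2$ gives $\limsup_n \Ex|F^K(T_n)|/n \le C \sum_{k > K} \Ex|f(T_k)|/k^2 =: \delta_K$, with $\delta_K \to 0$. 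Markov's inequality then bounds $\limsup_n \Prob(|F^K(T_n)|/n > \varepsilon)$ by $C\delta_K/\varepsilon$. Combining the three pieces through the decomposition $F(T_n)/n - \mu = (F_K(T_n)/n - \mu_K) + (\mu_K - \mu) + F^K(T_n)/n$ and taking $\limsup_n$ for fixed large $K$ (so that $|\mu_K - \mu| < \varepsilon/3$) shows $\limsup_n \Prob(|F(T_n)/n - \mu| > \varepsilon) \le C\delta_K/\varepsilon$; since the left-hand side is independent of $K$, letting $K \to \infty$ forces it to be $0$, which is the desired convergence in probability.
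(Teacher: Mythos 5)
Your proposal is correct and follows essentially the same route as the paper: the mean formula via conditioning on fringe subtree sizes and Lemma~\ref{lemma:increasing-expectation-variance}, then truncation of the toll function at a finite threshold, the finite-support central limit theorems of \cite{HolmgrenJS17,RalaivaosaonaW19} for the truncated part, and Markov's inequality plus a $\limsup$ in $n$ followed by $K \to \infty$ for the tail. The only cosmetic difference is that you separately verify $\Ex(F(T_n))/n \to \mu$ by dominated convergence, which the paper leaves implicit.
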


\begin{proof}
The first statement follows directly from Lemma~\ref{lemma:increasing-expectation-variance}, since fringe subtrees are, conditioned on their size, again random trees following the same probabilistic model as the whole tree. For functionals with finite support, where $f(T) = 0$ for all but finitely many trees $T$, convergence in probability follows from the central limit theorems in~\cite{HolmgrenJS17} and~\cite{RalaivaosaonaW19}. For the more general case, we approximate the additive functional $F$ with a truncated version $F_m$ based on the toll function
$$f_m(T) = \begin{cases} f(T) & |T| \leq m, \\ 0 & \text{ otherwise.}\end{cases}$$
Since we already know that convergence in probability holds for functionals with finite support, we have
$$\frac{F_m(T_n)}{n} \overset{p}{\to} \mu_m = \sum_{k = 1}^{m} \frac{(1+\alpha) \Ex(f(T_k))}{((1+\alpha)k+1)((1+\alpha)k-\alpha)}.$$
Now we use the triangle inequality and Markov's inequality to estimate $\Prob(|F(T_n)/n - \mu| > \varepsilon)$. Choose $m$ sufficiently large so that $|\mu_m - \mu| < \frac{\varepsilon}{3}$. Then we have, for $n > m$,
\begin{align*}
\Prob\Big( \Big| \frac{F(T_n)}{n} - \mu \Big| > \varepsilon \Big) &\leq \Prob\Big( \Big| \frac{F_m(T_n)}{n} - \mu_m \Big| > \frac{\varepsilon}{3} \Big) + \Prob\Big( \Big| \frac{F_m(T_n) - F(T_n)}{n} \Big| > \frac{\varepsilon}{3} \Big)  \\
&\leq \Prob\Big( \Big| \frac{F_m(T_n)}{n} - \mu_m \Big| > \frac{\varepsilon}{3} \Big) + \frac{3}{\varepsilon} \Ex \Big| \frac{F_m(T_n) - F(T_n)}{n} \Big| \\
&\leq \Prob\Big( \Big| \frac{F_m(T_n)}{n} - \mu_m \Big| > \frac{\varepsilon}{3} \Big) \\
&\quad + \frac{3}{\varepsilon} \Big( \frac{\Ex|f(T_n)|}{n} + \sum_{k = m+1}^{n-1} \frac{((1+\alpha)n-\alpha)\Ex|f(T_k)|}{n((1+\alpha)k+1)((1+\alpha)k-\alpha)} \Big).
\end{align*}
Since $\frac{F_m(T_n)}{n} \overset{p}{\to} \mu_m$, it follows that
$$\limsup_{n \to \infty} \Prob\Big( \Big| \frac{F(T_n)}{n} - \mu \Big| > \varepsilon \Big) \leq \frac{3}{\varepsilon} \sum_{k = m+1}^{\infty} \frac{(1+\alpha)\Ex|f(T_k)|}{((1+\alpha)k+1)((1+\alpha)k-\alpha)}.$$
Taking $m \to \infty$, we finally find that
$$\lim_{n \to \infty} \Prob\Big( \Big| \frac{F(T_n)}{n} - \mu \Big| > \varepsilon \Big) = 0,$$
completing the proof.
\end{proof}

In order to apply this lemma in the same way as for simply generated trees, we need one more ingredient: let $t$ be a plane tree with $n$ vertices. The number of increasing labellings of the vertices with labels $1,2,\ldots,n$ is given by
$$\frac{n!}{\prod_{v} |t(v)|},$$
see for example \cite[Eq.~(5)]{Ruskey1992Generating} or \cite[Section 5.1.4, Exercise 20]{Knuth1998}. Considering a tree as a poset, this is equivalent to counting linear extensions. The quantity
$$\sum_{v} \log |t(v)|,$$
i.e., the sum of the logarithms of all fringe subtree sizes, is also known as the \emph{shape functional}, see \cite{FillK2004}.

\subsection{Distinct fringe subtrees and distinct plane fringe subtrees in increasing trees}

In this section, we consider increasing trees with a plane embedding. There is a natural embedding for $d$-ary increasing trees, where each vertex has $d$ possible positions at which a child can be attached. Similarly, plane oriented recursive trees can be regarded as plane trees with increasing vertex labels. In these cases, the notion of distinctness as in Section~\ref{subsec:distinct} is still meaningful: two fringe subtrees are considered the same if they have the same shape (as $d$-ary tree/plane tree) when the labels are removed.

\medskip

Let us start with $d$-ary increasing trees. In this case, the isomorphism classes are precisely $d$-ary trees (Example~\ref{ex:dary}), whose number is
$$|\mathcal{I}_k| = \frac{1}{k} \binom{dk}{k-1}.$$
It follows that
$$C_1 = \limsup_{k \to \infty} \frac{\log |\mathcal{I}_k|}{k} = d \log d - (d-1)\log(d-1),$$
so (C1) is satisfied. See also the discussion in the proof of Corollary~\ref{cor:distinct-binary}. 

\medskip

We now verify (C2). Taking the number of increasing labellings into account, as explained above, we find that for a given $d$-ary tree $t$ with $n$ vertices, the probability that a random increasing $d$-ary tree with $n$ vertices has the shape of $t$ is
$$\frac{n!}{\prod_{k=1}^{n-1} (1+k(d-1))} \prod_{v} \frac{1}{|t(v)|}.$$
Recall here that the denominator $\prod_{k=1}^{n-1} (1+k(d-1))$ is precisely the number of $d$-ary increasing trees with $n$ vertices. Note next that
$$\frac{n!}{\prod_{k=1}^{n-1} (1+k(d-1))} \sim \frac{\Gamma(\frac{1}{d-1}) n^{(d-2)/(d-1)}}{(d-1)^n} = \exp \big({- \log(d-1) n + o(\log n) }\big).$$
The additive functional with toll function $f(t) = \log |t|$ clearly satisfies the conditions of Lemma~\ref{lem:additive_functionals_increasing}, with
$$\mu = \sum_{k = 1}^{\infty} \frac{(1-1/d) \log k}{((1-1/d)k+1)((1-1/d)k+1/d)} = d(d-1) \sum_{k = 1}^{\infty} \frac{\log k}{((d-1)k+d)((d-1)k+1)}.$$
Thus it is possible (as in the proofs of Theorem~\ref{thm:plane-simply-generated} and Theorem~\ref{thm:simply-generated-unordered}) to define subsets $\mathcal{J}_k \subseteq \mathcal{I}_k$ of $d$-ary increasing trees with the property that the shape of a random $d$-ary increasing tree with $k$ vertices belongs to $\mathcal{J}_k$ with probability $1-o(1)$ as the number of vertices goes to infinity, while the probability of any single isomorphism class in $\mathcal{J}_k$ is never greater than $e^{-(\log(d-1) + \mu) k + o(k)}$. So condition (C2) is also satisfied, with a constant
$$C_2 = \log(d-1) + d(d-1) \sum_{k = 1}^{\infty} \frac{\log k}{((d-1)k+d)((d-1)k+1)}.$$
Hence we obtain the following theorem as a corollary of Theorem~\ref{thm:master-theorem-increasing}.

\begin{theo}
Let $H_n$ be the number of distinct $d$-ary trees occurring among the fringe subtrees of a random $d$-ary increasing tree of size $n$. For the two constants 
$$\underline{c}(d) = \frac{d}{d-1} \log(d-1) + d^2 \sum_{k = 1}^{\infty} \frac{\log k}{((d-1)k+d)((d-1)k+1)},$$
$$\overline{c}(d) = \frac{d}{d-1} \big( d \log d - (d-1)\log(d-1) \big)$$
the following holds:
\begin{enumerate}
\item[(i)] $\displaystyle \frac{\underline{c}(d) n}{\log n} (1+o(1)) \leq \Ex(H_n) \leq \frac{\overline{c}(d) n}{\log n} (1+o(1))$,
\item[(ii)] $\displaystyle \frac{\underline{c}(d) n}{\log n} (1+o(1)) \leq H_n \leq \frac{\overline{c}(d) n}{\log n} (1+o(1))$ with high probability.
\end{enumerate}
\end{theo}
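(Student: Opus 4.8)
The plan is to derive this theorem directly from Theorem~\ref{thm:master-theorem-increasing}, since the discussion preceding the statement has already established that the partition of $d$-ary increasing trees into isomorphism classes by their underlying $d$-ary shapes satisfies both (C1) and (C2). Concretely, I would first recall that (C1) holds with $C_1 = d\log d - (d-1)\log(d-1)$; this follows from the exact count $|\mathcal{I}_k| = \frac{1}{k}\binom{dk}{k-1}$ together with Stirling's formula, which gives $\frac{1}{k}\log\binom{dk}{k-1} \to d\log d - (d-1)\log(d-1)$. Next I would recall that (C2) holds with
\[
C_2 = \log(d-1) + d(d-1)\sum_{k=1}^{\infty}\frac{\log k}{((d-1)k+d)((d-1)k+1)},
\]
where the term $\log(d-1)$ comes from the exponential growth rate $(d-1)^{-n}$ of the reciprocal number of $d$-ary increasing trees, and the sum is the limit $\mu$ produced by Lemma~\ref{lem:additive_functionals_increasing} applied to the \emph{shape functional}, i.e.\ the additive functional with toll function $f(t)=\log|t|$. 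The hypotheses of that lemma are immediate here, since $f(T_k)=\log k$ is deterministic, so that $\Ex|f(T_k)| = \log k = o(k)$ and $\sum_k \log k/k^2 < \infty$; the subsets $\mathcal{J}_k$ are then built exactly as in the proofs of Theorem~\ref{thm:plane-simply-generated} and Theorem~\ref{thm:simply-generated-unordered}, using concentration of $\prod_v |t(v)|^{-1}$ around $e^{-\mu k}$.

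With (C1) and (C2) in hand, the remaining step is purely a substitution. For $d$-ary increasing trees we have $\alpha = -1/d$, so the prefactor in Theorem~\ref{thm:master-theorem-increasing} is $\kappa = 1/(1+\alpha) = d/(d-1)$. Multiplying through, the upper constant is
\[
\kappa C_1 = \frac{d}{d-1}\big(d\log d - (d-1)\log(d-1)\big) = \overline{c}(d),
\]
while the lower constant is
\[
\kappa C_2 = \frac{d}{d-1}\log(d-1) + d^2\sum_{k=1}^{\infty}\frac{\log k}{((d-1)k+d)((d-1)k+1)} = \underline{c}(d),
\]
using that $\kappa \cdot d(d-1) = d^2$. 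Parts (i) and (ii) then follow at once from the corresponding parts of Theorem~\ref{thm:master-theorem-increasing}.

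Since every structural ingredient is already available, I do not expect a genuine obstacle; the work is confined to checking that the shape functional meets the hypotheses of Lemma~\ref{lem:additive_functionals_increasing} and that the Stirling estimate reproduces $C_1$ exactly. If any subtlety is to be found, it is in confirming that the error terms in the concentration statement defining $\mathcal{J}_k$ are uniformly $o(k)$, so that (C2b) holds with precisely the exponent $e^{-(\log(d-1)+\mu)k + o(k)}$ rather than a weaker bound; but this uniformity is exactly what the convergence-in-probability conclusion of Lemma~\ref{lem:additive_functionals_increasing}, combined with the asymptotics of $n!/\prod_{k=1}^{n-1}(1+k(d-1))$, provides.
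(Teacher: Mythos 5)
Your proposal is correct and follows essentially the same route as the paper: identifying $C_1$ from the count $\frac{1}{k}\binom{dk}{k-1}$ of $d$-ary shapes, obtaining $C_2=\log(d-1)+\mu$ by combining the asymptotics of $n!/\prod_{k=1}^{n-1}(1+k(d-1))$ with Lemma~\ref{lem:additive_functionals_increasing} applied to the shape functional $f(t)=\log|t|$, and then substituting $\kappa=d/(d-1)$ into Theorem~\ref{thm:master-theorem-increasing}. The constants check out exactly as you computed them, and your observation that $f(T_k)=\log k$ is deterministic (so the lemma's hypotheses are trivially met) matches the paper's reasoning.
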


In the special case $d=2$, which corresponds to binary search trees, we have $\underline{c}(2) \approx 2.4071298335$ and $\overline{c}(2) \approx 2.7725887222$, cf.~Theorem~\ref{thm:orderedbst}. This was already obtained in the conference version of this paper, see \cite{SeelbachWagner20}.

\medskip

For plane oriented recursive trees, the procedure is analogous. The isomorphism classes are precisely the plane trees (see Example~\ref{ex:plane}), and we have
$$|\mathcal{I}_k| = \frac{1}{k} \binom{2k-2}{k-1},$$
thus $C_1 = \log 4$. Moreover, arguing in the same way as for $d$-ary trees, we find that (C2) is satisfied with
$$C_2 = \log 2 + \sum_{k=1}^{\infty} \frac{2 \log k}{(2k+1)(2k-1)}.$$
So Theorem~\ref{thm:master-theorem-increasing} yields

\begin{theo}\label{theo:port}
Let $H_n$ be the number of distinct fringe subtrees in a random plane oriented recursive tree of size $n$. For the two constants 
$$c_{13} = \frac{\log 2}{2} + \sum_{k = 1}^{\infty} \frac{\log k}{(2k+1)(2k-1)} \approx 0.5854804841$$
and $c_{14} = \log 2 \approx 0.6931471806$,
the following holds:
\begin{enumerate}
\item[(i)] $\displaystyle \frac{c_{13} n}{\log n} (1+o(1)) \leq \Ex(H_n) \leq \frac{c_{14} n}{\log n} (1+o(1))$,
\item[(ii)] $\displaystyle \frac{c_{13} n}{\log n} (1+o(1)) \leq H_n \leq \frac{c_{14} n}{\log n} (1+o(1))$ with high probability.
\end{enumerate}
\end{theo}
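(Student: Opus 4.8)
The plan is to derive Theorem~\ref{theo:port} as a direct corollary of Theorem~\ref{thm:master-theorem-increasing}, proceeding exactly as in the preceding analysis of $d$-ary increasing trees. Plane oriented recursive trees are the gports with $r = 1$, so that $\alpha = 1$ and hence $\kappa = \frac{1}{1+\alpha} = \frac12$; once (C1) and (C2) are verified with $C_1 = \log 4$ and $C_2 = \log 2 + \sum_{k=1}^\infty \frac{2\log k}{(2k+1)(2k-1)}$, the claimed constants $c_{14} = \kappa C_1 = \log 2$ and $c_{13} = \kappa C_2 = \frac{\log 2}{2} + \sum_{k=1}^\infty \frac{\log k}{(2k+1)(2k-1)}$ follow at once. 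First I would settle (C1): the isomorphism classes here are the plane trees (Example~\ref{ex:plane}), so $|\mathcal{I}_k| = \frac{1}{k}\binom{2k-2}{k-1}$ is the $(k-1)$st Catalan number, whose asymptotics $\frac{1}{k}\binom{2k-2}{k-1} \sim \frac{4^{k-1}}{\sqrt{\pi}\,k^{3/2}}$ give $C_1 = \lim_{k\to\infty} \frac{\log|\mathcal{I}_k|}{k} = \log 4$.

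The substance of the proof is (C2), and here I would mirror the $d$-ary argument line by line. Counting increasing labellings, the probability that a random PORT on $n$ vertices has a prescribed plane shape $t$ equals $\frac{n!}{\prod_{k=1}^{n-1}(2k-1)}\prod_v \frac{1}{|t(v)|}$, the denominator $\prod_{k=1}^{n-1}(2k-1)$ being the total weight of PORTs on $n$ vertices. Writing $\prod_{k=1}^{n-1}(2k-1) = \frac{(2n-2)!}{2^{n-1}(n-1)!}$ and applying Stirling's formula gives $\frac{n!}{\prod_{k=1}^{n-1}(2k-1)} = \exp(-n\log 2 + o(n))$, which is the source of the $\log 2$ summand in $C_2$. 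The remaining factor $\prod_v \frac{1}{|t(v)|} = \exp(-\sum_v \log|t(v)|)$ is governed by the \emph{shape functional}, i.e.\ the additive functional $F(t) = \sum_v \log|t(v)|$ with toll function $f(t) = \log|t|$. Since $f(T_k) = \log k$ is deterministic, $\Ex(f(T_k)) = \log k$, so the hypotheses $\Ex|f(T_n)| = \log n = o(n)$ and $\sum_{k\ge1}\frac{\log k}{k^2} < \infty$ of Lemma~\ref{lem:additive_functionals_increasing} are met, and the lemma yields $\frac{1}{n}F(T_n) \overset{p}{\to} \mu = \sum_{k=1}^\infty \frac{2\log k}{(2k+1)(2k-1)}$ (using $1+\alpha = 2$).

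With these two ingredients I would construct $\mathcal{J}_k$ exactly as in the proofs of Theorem~\ref{thm:plane-simply-generated} and Theorem~\ref{thm:simply-generated-unordered}: choosing a sequence $\varepsilon_k \to 0$ via the same diagonalization trick used there, let $\mathcal{J}_k$ consist of those plane trees $t$ of size $k$ with $\sum_v \log|t(v)| \ge (\mu - \varepsilon_k)k$. Convergence in probability makes the shape of a random PORT of size $k$ lie in $\mathcal{J}_k$ with probability $1 - o(1)$, which is (C2a); and for any single class in $\mathcal{J}_k$, combining the two estimates bounds its probability by $e^{-k\log 2 + o(k)}\cdot e^{-(\mu-\varepsilon_k)k} = e^{-(\log 2 + \mu)k + o(k)}$, which is (C2b) with $C_2 = \log 2 + \mu$. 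Theorem~\ref{theo:port} then follows from Theorem~\ref{thm:master-theorem-increasing}. Since the whole scheme is inherited from the $d$-ary case, the only genuinely new work is numerical; the one step warranting care is the Stirling estimate of $\frac{n!}{\prod_{k=1}^{n-1}(2k-1)}$ and the verification that the resulting exponential rate is exactly $\log 2$, which is where I would expect the main (if modest) obstacle to lie.
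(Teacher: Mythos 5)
Your proposal is correct and follows essentially the same route as the paper: the paper also derives Theorem~\ref{theo:port} from Theorem~\ref{thm:master-theorem-increasing} by noting that the isomorphism classes are plane trees (giving $C_1=\log 4$ via Catalan-number asymptotics) and that (C2) holds with $C_2=\log 2+\sum_{k\ge 1}\frac{2\log k}{(2k+1)(2k-1)}$ ``arguing in the same way as for $d$-ary trees.'' Your write-up simply makes explicit the details the paper leaves implicit (the Stirling estimate of $n!/\prod_{k=1}^{n-1}(2k-1)$ and the application of Lemma~\ref{lem:additive_functionals_increasing} to the shape functional), and these details check out.
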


For $d$-ary increasing trees, the notion of distinctness of Section~\ref{subsec:distinctplane} also makes sense (for plane oriented recursive trees, it is simply equivalent to that of Theorem~\ref{theo:port}). In this case, we consider fringe subtrees as distinct only if they are different as plane trees. Thus the isomorphism classes are plane trees with maximum degree at most $d$, which form a simply generated family of trees. Their generating function $Y_d(x)$ satisfies
$$Y_d(x) = x(1+Y_d(x) + Y_d(x)^2 + \cdots + Y_d(x)^d).$$
Letting $\tau_d$ be the unique positive solution of the equation $1 = t^2+2t^3 + \cdots + (d-1)t^d$, the exponential growth constant of this simply generating family is $\eta_d = \frac{1+\tau_d + \tau_d^2 + \cdots + \tau_d^d}{\tau_d} = 1 + 2\tau_d + 3\tau_d^2 + \cdots + d\tau_d^{d-1}$, see Theorem~\ref{thm:number-of-trees}. Thus (C1) is satisfied with $C_1 = \log \eta_d$. We also note that $\eta_d \in [3,4]$. Specifically, in the special case $d=2$ we obtain the Motzkin numbers with $\eta_2 = 3$, see Example~\ref{ex:motzkin}. Moreover, we have $\lim_{d \to \infty} \eta_d = 4$.

\medskip

In order to verify (C2) and determine a suitable constant, we combine the argument from the previous two theorems with that of Section~\ref{subsec:distinctplane}. The probability that a random increasing $d$-ary tree with $n$ vertices has the shape of $t$, regarded as a plane tree, is
$$\frac{n!}{\prod_{k=1}^{n-1} (1+k(d-1))} \prod_{v} \frac{\binom{d}{\deg(v)}}{|t(v)|}.$$

Note here that the product $\prod_v \binom{d}{\deg(v)}$ gives the number of $d$-ary realizations of the plane tree~$t$, see the proof of Theorem~\ref{thm:plane-simply-generated} for comparison. So we consider the additive functional with toll function $f(t) =\log |t| - \log \binom{d}{\rho(t)}$, where $\rho(t)$ is the degree of the root of $t$, instead of just $f(t) = \log |t|$ as it was chosen before. Since $\binom{d}{\rho(t)}$ is clearly bounded, the conditions of Lemma~\ref{lem:additive_functionals_increasing} are still satisfied, and we obtain a suitable constant $C_2$ that satisfies (C2) as before. For example, in the binary case we have the following theorem:

\begin{theo}
Let $J_n$ be the number of distinct plane trees occurring among the fringe subtrees of a random binary increasing tree of size $n$. For the two constants
$$c_{15} = 4 \sum_{k = 2}^{\infty} \frac{\log k - \frac{2 \log 2}{k}}{(k+1)(k+2)}=4 \sum_{k = 2}^{\infty} \frac{\log k }{(k+1)(k+2)}- \frac{2\log 2 }{3} \approx 1.9450317130$$
and $c_{16} = 2 \log 3 \approx 2.1972245773$, the following holds:
\begin{enumerate}
\item[(i)] $\displaystyle \frac{c_{15} n}{\log n} (1+o(1)) \leq \Ex(J_n) \leq \frac{c_{16} n}{\log n} (1+o(1))$,
\item[(ii)] $\displaystyle \frac{c_{15} n}{\log n} (1+o(1)) \leq J_n \leq \frac{c_{16} n}{\log n} (1+o(1))$ with high probability.
\end{enumerate}
\end{theo}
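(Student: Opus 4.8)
The plan is to apply Theorem~\ref{thm:master-theorem-increasing} to the family of binary increasing trees (the case $d=2$) with the notion of distinctness in which two fringe subtrees are identified precisely when they coincide as plane trees; this is the specialization of the general $d$-ary discussion above to $d=2$. Here $\alpha = -1/d = -1/2$, so $\kappa = 1/(1+\alpha) = 2$, and it remains only to identify the constants $C_1$ and $C_2$ of conditions (C1) and (C2): the result will then follow with $c_{16} = \kappa C_1$ and $c_{15} = \kappa C_2$.

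First I would verify (C1). The isomorphism classes are exactly the plane trees with maximum degree at most $2$, i.e.\ Motzkin trees (Example~\ref{ex:motzkin}), whose exponential growth constant is $\eta_2 = 3$ by the discussion preceding the theorem. Hence $C_1 = \log 3$ and $\kappa C_1 = 2\log 3 = c_{16}$, which gives the upper bound. The bulk of the work is (C2). Following the $d$-ary argument, the probability that a random binary increasing tree with $n$ vertices has the shape of a fixed plane tree $t$ (after removing the labels) equals $\prod_v \binom{2}{\deg(v)}/|t(v)|$, where I have used that the prefactor $n!/\prod_{k=1}^{n-1}(1+k) = 1$ for $d=2$ (equivalently, $\log(d-1) = 0$). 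Writing this probability as $e^{-F(t)}$ identifies the relevant additive functional $F$ with toll function $f(t) = \log|t| - \log\binom{2}{\rho(t)}$, where $\rho(t)$ is the root degree.

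Since $\binom{2}{\rho(t)}$ is bounded, $f$ satisfies the hypotheses $\Ex|f(T_k)| = O(\log k) = o(k)$ and $\sum_k \Ex|f(T_k)|/k^2 < \infty$ of Lemma~\ref{lem:additive_functionals_increasing}, so $F(T_k)/k \overset{p}{\to} \mu$. Exactly as in the proofs of Theorem~\ref{thm:plane-simply-generated} and Theorem~\ref{thm:simply-generated-unordered}, one can then build subsets $\mathcal{J}_k$ on which the probability of any single isomorphism class is at most $e^{-\mu k + o(k)}$ while still capturing all but a $o(1)$ fraction of trees; this establishes (C2) with $C_2 = \mu$. With $\alpha = -1/2$, Lemma~\ref{lem:additive_functionals_increasing} gives $\mu = \sum_{k \geq 1} 2\,\Ex(f(T_k))/((k+1)(k+2))$, so that $c_{15} = \kappa C_2 = 2\mu = 4\sum_{k \geq 1} \Ex(f(T_k))/((k+1)(k+2))$, where $\Ex(f(T_k)) = \log k - (\log 2)\,\Prob(\rho(T_k) = 1)$ because the root contributes $\log\binom{2}{\rho(t)} = \log 2$ precisely when it is unary.

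The one genuinely new ingredient, and the step I expect to require the most care, is the evaluation of the root-degree probability $\Prob(\rho(T_k) = 1) = 2/k$. Since binary increasing trees coincide with binary search trees, the size of the left root branch is uniform on $\{0, 1, \ldots, k-1\}$, and the root is unary exactly when one of its two branches is empty, i.e.\ when the left branch has size $0$ or $k-1$, each occurring with probability $1/k$ (and the two events are disjoint for $k \geq 2$). Substituting $\Ex(f(T_k)) = \log k - (2\log 2)/k$ and noting that the $k=1$ term vanishes yields $c_{15} = 4\sum_{k \geq 2}\bigl(\log k - (2\log 2)/k\bigr)/((k+1)(k+2))$, which completes the proof; the numerical values of $c_{15}$ and $c_{16}$ then follow by evaluating these expressions.
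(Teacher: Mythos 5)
Your proposal is correct and follows essentially the same route as the paper: apply Theorem~\ref{thm:master-theorem-increasing} with $\kappa=2$, get $C_1=\log 3$ from the Motzkin trees, and get $C_2$ from Lemma~\ref{lem:additive_functionals_increasing} applied to the toll function $f(t)=\log|t|-\log\binom{2}{\rho(t)}$. The only detail the paper leaves implicit, namely $\Ex\bigl(\log\binom{2}{\rho(T_k)}\bigr)=(2\log 2)/k$ via the uniform distribution of the left root-branch size, you supply correctly, and it reproduces the stated value of $c_{15}$.
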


\subsection{Distinct unordered fringe subtrees in increasing trees}

The notion of distinctness of Section~\ref{subsec:unordered} is meaningful for all families of increasing trees we are considering in this paper. In this section, two fringe subtrees are regarded the same if there is a (root-preserving) isomorphism between the two.

\medskip

In the same way as for simply generated families of trees, we have to take the number of automorphisms into account, so there are now three factors that determine the probability that a random increasing tree in one of our very simple families is isomorphic to a fixed rooted unordered tree $t$:

\begin{itemize}
\item the number of plane representations of $t$, which is given by
$$\frac{\prod_v \deg(v)!}{|\Aut t|},$$
\item the weight
$$\prod_v \phi_{\deg(v)},$$
where $\phi_k = \binom{d}{k}$ for $d$-ary increasing trees, $\phi_k = \binom{r+k-1}{k}$ for gports, and $\phi_k = \frac{1}{k!}$ for recursive trees.
\item the number of increasing labellings of any plane representation, which is
$$\frac{|t|!}{\prod_v |t(v)|}.$$
\end{itemize}

The product of all these is proportional to the probability that a random increasing tree with $n = |t|$ vertices is isomorphic to $t$. One only needs to divide by the number (more precisely: total weight) of $n$-vertex increasing trees in the specific family to obtain the probability.

\medskip

So once again we consider a suitable additive functional that takes all these into account. For a tree $t$ whose root degree is $\rho(t)$ and whose branches belong to $k_t$ isomorphism classes with respective multiplicities $m_1$, $m_2$, \ldots, $m_{k_t}$, we define the toll function by
\begin{multline}\label{eq:noniso_toll}
f(t) = \log |t| + \log \big( m_1! m_2! \cdots m_{k_t}! \big) \\
- \begin{cases} \log d^{\underline{\rho(t)}} = \log \big(d(d-1)\cdots (d-\rho(t)+1)\big) & \text{$d$-ary increasing trees,} \\
\log r^{\overline{\rho(t)}} = \log \big(r(r+1)\cdots (r+\rho(t)-1)\big) & \text{gports,} \\
0 & \text{recursive trees.} \end{cases}
\end{multline}
Let $F$ be the associated additive functional. Then the probability that a random tree with $k$ vertices belongs to the same isomorphism class as a fixed $k$-vertex tree $t$ is
$$e^{-F(t)} \times \begin{cases} \frac{k!}{\prod_{j=1}^{k-1} (1+(d-1)j)} & \text{$d$-ary increasing trees,} \\ \frac{k!}{\prod_{j=1}^{k-1} ((r+1)j-1)} & \text{gports,} \\ k & \text{recursive trees.} \end{cases}$$
It is easy to see that the toll function $f(t)$ defined above is $O(\log |t| + \rho(t) \log \rho(t)) = O( \rho(t) \log |t|)$. So in order to show that the conditions of Lemma~\ref{lem:additive_functionals_increasing} are satisfied, one needs to bound the average root degree in a suitable way. For $d$-ary increasing trees, this is trivial. In the other two cases, one can use generating functions. 

\medskip

Recall that the exponential generating function $Y(x)$ for an increasing tree family satisfies the differential equation
$$Y'(x) = \Phi(Y(x)),$$
with $\Phi(t) = e^t$ for recursive trees and $\Phi(t) = (1-t)^{-r}$ for gports. The bivariate generating function $Y(x,u)$, in which $u$ marks the root degree, is given by
$$\frac{\partial}{\partial x} Y(x,u) = \Phi(u Y(x)),$$
thus
$$\frac{\partial^2}{\partial x \partial u} Y(x,u) \Big|_{u=1} = \Phi'(Y(x)) Y(x).$$
The average root degree of $k$-vertex trees is
$$\frac{[x^k] \frac{\partial}{\partial u} Y(x,u) \Big|_{u=1}}{[x^k] Y(x)} = \frac{[x^{k-1}] \frac{\partial^2}{\partial x \partial u} Y(x,u) \Big|_{u=1}}{k [x^k] Y(x)} = \frac{[x^{k-1}] \Phi'(Y(x)) Y(x)}{k [x^k] Y(x)} .$$
Plugging in $Y(x) = - \log(1-x)$ (for recursive trees) and $Y(x) = 1 - (1-(r+1)x)^{1/(r+1)}$ (for gports) respectively and simplifying, we find that the average root degree is $1 + \frac12 + \cdots + \frac1{k-1} \sim \log k$ for recursive trees and
$$\frac{(r+1)^{k-1} (k-1)!}{\prod_{j=2}^{k-1}((r+1)j+1)} - r \sim r \Gamma \Big( \frac{r}{r+1} \Big) k^{1/(r+1)}$$
for gports. Consequently, $\Ex|f(T_k)| = O(\log^2 k)$ and $\Ex|f(T_k)| = O(k^{1/(r+1)} \log k)$ respectively in Lemma~\ref{lem:additive_functionals_increasing}, which means that the conditions of that lemma are satisfied.

\medskip

We can conclude now as before that the conditions of Theorem~\ref{thm:master-theorem-increasing} hold. The number of non-isomorphic fringe subtrees is of the order $n/\log n$ for all families of increasing trees we are considering. For example, we obtain Theorem~\ref{thm:unorderedbst} as a corollary in the case of binary increasing trees, or equivalently, binary search trees (see the conference version of this paper \cite{SeelbachWagner20}).

For recursive trees, the upper bound of $O(n/\log n)$ was determined recently in a paper of Bodini, Genitrini, Gittenberger, Larcher and Naima \cite{BodiniGGLN20}. The authors of that paper conjectured that this upper bound is asymptotically sharp and proved a lower bound of order $\sqrt{n}$. Indeed, our general theorem (Theorem~\ref{thm:master-theorem-increasing}) applies and confirms their conjecture.

\begin{theo}\label{thm:unorderedrec}
Let $K_n$ be the total number of distinct unordered fringe subtrees in a random recursive tree of size $n$. For two constants $c_{17} \approx 0.9136401430$ and $c_{18} \approx 1.0837575972$, the following holds:
\begin{enumerate}
\item[(i)] $\displaystyle c_{17} \frac{n}{\log n} (1+o(1)) \leq \Ex(K_n) \leq c_{18} \frac{n}{\log n} (1+o(1))$,
\item[(ii)] $\displaystyle c_{17} \frac{n}{\log n} (1+o(1)) \leq K_n \leq c_{18} \frac{n}{\log n} (1+o(1))$ with high probability.
\end{enumerate}
\end{theo}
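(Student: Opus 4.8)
The plan is to deduce the statement directly from Theorem~\ref{thm:master-theorem-increasing}, so that essentially all of the work reduces to verifying conditions (C1) and (C2) for the partition of recursive trees into unordered isomorphism classes and to identifying the two constants. Recursive trees correspond to $\Phi(t) = e^t$ and hence to the parameter $\alpha = 0$, so that $\kappa = \frac{1}{1+\alpha} = 1$. Consequently Theorem~\ref{thm:master-theorem-increasing} will produce the claimed bounds both in expectation and with high probability with $c_{17} = C_2$ and $c_{18} = C_1$; it only remains to pin down these constants.

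For (C1), I would observe that since $\Phi(t) = e^t$, the set of admissible vertex degrees is $M = \mathbb{N}$, and two fringe subtrees are identified precisely when they agree as unordered rooted trees. Hence $|\mathcal{I}_k|$ is the number of P\'olya trees (unordered rooted trees) on $k$ vertices, whose growth is governed by Theorem~\ref{thm:growth-unordered}: one has $|\mathcal{I}_k| \sim a_M b_M^{k} k^{-3/2}$ with $b_M \approx 2.9557652857$, the same growth constant already used for labelled trees in Corollary~\ref{cor:distinct_unordered_labelled}, since there too $M = \mathbb{N}$. This gives (C1) with $C_1 = \log b_M \approx 1.0837575972 = c_{18}$.

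For (C2), the groundwork is already in place. The toll function $f$ from~\eqref{eq:noniso_toll} specializes, in the recursive-tree case, to $f(t) = \log|t| + \log(m_1! m_2! \cdots m_{k_t}!)$, and we have checked that $\Ex|f(T_k)| = O(\log^2 k)$, so the hypotheses of Lemma~\ref{lem:additive_functionals_increasing} hold and $F(T_k)/k \overset{p}{\to} \mu$ with $\mu = \sum_{k \ge 1} \frac{\Ex(f(T_k))}{k(k+1)}$. Using that the probability for a random recursive tree of size $k$ to be isomorphic to a fixed $k$-vertex unordered tree $t$ equals $k\,e^{-F(t)}$, I would let $\mathcal{J}_k$ consist of those classes whose representative satisfies $F(t) \ge (\mu - \varepsilon_k)k$ for a suitable $\varepsilon_k \to 0$. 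Convergence in probability makes (C2a) immediate, while for any class in $\mathcal{J}_k$ the probability is at most $k\,e^{-(\mu - \varepsilon_k)k} = e^{-\mu k + o(k)}$, giving (C2b) with $C_2 = \mu$, so that $c_{17} = \mu$.

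The main obstacle is the exact numerical evaluation of $C_2 = \mu \approx 0.9136401430$. Splitting $\mu = \sum_{k\ge 1} \frac{\log k}{k(k+1)} + \sum_{k \ge 1} \frac{\Ex\,\log(m_1!\cdots m_{k_t}!)}{k(k+1)}$, the first sum is an explicit numerical constant, but the second requires the joint law of the root branches of a random recursive tree up to unordered isomorphism, i.e.\ the probability of each multiplicity pattern of isomorphic branches. I would compute $\Ex(f(T_k))$ for each $k$ by exploiting the standard recursive decomposition (conditioned on the root degree and the branch sizes, the branches are independent random recursive trees), evaluate these expectations numerically up to a truncation point, and control the tail via the $O(\log^2 k)$ bound to reach the stated precision. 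Everything else is a routine specialization of the machinery already developed in this section.
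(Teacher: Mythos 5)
Your proposal follows essentially the same route as the paper: it verifies (C1) via the P\'olya-tree growth constant $b_M \approx 2.9557652857$ and (C2) via the additive functional with toll function \eqref{eq:noniso_toll} specialized to recursive trees (using the probability $k\,e^{-F(t)}$ for a fixed isomorphism class, the bound $\Ex|f(T_k)| = O(\log^2 k)$, and Lemma~\ref{lem:additive_functionals_increasing}), arriving at the same constants $c_{18} = \log b_M$ and $c_{17} = \sum_{k \geq 1} \Ex(f(T_k))/(k(k+1))$. The only divergence is the numerical evaluation of $c_{17}$: computing $\Ex(f(T_k))$ term by term and truncating, with the tail controlled only by the $O(\log^2 k)$ bound, converges far too slowly for ten digits (the tail after $K$ terms is of order $\log^2 K/K$), whereas the paper rewrites the multiplicity contribution as a rapidly convergent sum over small unordered trees $S$ via a bivariate generating function, which is what makes the stated precision attainable.
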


Here, the constant $c_{18}$ is the logarithm of the growth constant for the number of unordered rooted trees (P\'olya trees), see the proof of Theorem~\ref{cor:distinct_unordered_labelled} for comparison. The constant $c_{17}$ is more complicated: it is given by
$$c_{17} = \sum_{k =1}^{\infty} \frac{\Ex(f(T_k))}{k(k+1)},$$
where $T_k$ stands for a random recursive tree with $k$ vertices and $f$ is defined in~\eqref{eq:noniso_toll}. It seems difficult to determine the expected value $\Ex(f(T_k))$ exactly, and even numerical approximation is somewhat trickier than in the previous examples (however, it is easy to compute simple lower bounds, as it is clear that $\Ex(f(T_k)) \geq \log k$). Let us describe the approach:

\medskip

The component $\log |t|$ in \eqref{eq:noniso_toll} is easy to deal with and contributes $\sum_{k=1}^{\infty} \frac{\log k}{k(k+1)}$ to the constant $c_{17}$. In order to numerically compute the contribution of the rest, let us determine the probability that a specific rooted unordered tree $S$ occurs exactly $m$ times among the root branches of a $k$-vertex recursive tree. The contribution to $\Ex(f(T_k))$ will be precisely $\log m!$ times that probability. Let $s = |S|$ be the size of $S$, and let $p_S$ denote the probability that a random recursive tree of size $s$ is isomorphic to $S$. Then the bivariate exponential generating function $Y(x,u)$ for recursive trees where the second variable $u$ takes the number of isomorphic copies of $S$ as a root branch into account is given by
$$\frac{\partial}{\partial x} Y(x,u) = \exp \Big( Y(x,1) + \frac{(u-1)p_S}{s} x^s \Big).$$
Recall here that the coefficient of $x^s$ in $Y(x,1) = - \log(1-x)$ is $\frac{1}{s}$, so $\frac{p_S}{s} x^s$ represents the fraction that is isomorphic to $S$. For simplicity, set $c_S = \frac{p_S}{s}$. Then this reduces to
$$\frac{\partial}{\partial x} Y(x,u) = \frac{\exp((u-1)c_Sx^s)}{1-x}.$$
So the number of recursive trees of size $k$ in which precisely $m$ branches isomorphic to $S$ occur is
$$k! [x^k u^m] Y(x,u) = (k-1)! [x^{k-1} u^m] \frac{\partial}{\partial x} Y(x,u) = (k-1)! [x^{k-1}] \frac{c_S^m x^{ms}}{m!} \cdot \frac{\exp(-c_Sx^s)}{1-x}.$$
There are $(k-1)!$ recursive trees with $k$ vertices; thus we find that
$$\Ex(f(T_k)) = \log k + \sum_{m \geq 2} \sum_{S} \log m! [x^{k-1}] \frac{c_S^m x^{m|S|}}{m!} \frac{\exp(-c_Sx^{|S|})}{1-x}.$$
Since $\frac{1}{k(k+1)} = \int_0^1 \int_0^y x^{k-1} \,dx\,dy$, we get
$$c_{17} = \sum_{k=1}^{\infty} \frac{\Ex(f(T_k))}{k(k+1)} = \sum_{k=1}^{\infty} \frac{\log k}{k(k+1)} + \sum_{m \geq 2} \sum_{S} \log m! \int_0^1 \int_0^y \frac{c_S^m x^{m|S|}}{m!} \frac{\exp(-c_Sx^{|S|})}{1-x} \,dx\,dy.$$
Interchanging the order of integration, this becomes
$$c_{17} = \sum_{k=1}^{\infty} \frac{\log k}{k(k+1)} + \sum_{m \geq 2} \sum_{S} \log m! \int_0^1 \frac{c_S^m x^{m|S|}}{m!} \exp \big(-c_Sx^{|S|}\big) \,dx.$$
Lastly, expand the exponential function into a power series and integrate to obtain
$$c_{17} = \sum_{k=1}^{\infty} \frac{\log k}{k(k+1)} + \sum_{m \geq 2} \sum_{S} \frac{\log m!}{m!} \sum_{r=0}^{\infty} \frac{(-1)^r}{r!} \frac{c_S^{r+m}}{(r+m)|S|+1}$$
or equivalently
$$c_{17} = \sum_{k=1}^{\infty} \frac{\log k}{k(k+1)}  + \sum_S \sum_{\ell \geq 2} \Big( \sum_{m=2}^{\ell} (-1)^{\ell-m} \binom{\ell}{m} \log m! \Big) \frac{c_S^{\ell}}{\ell! (\ell|S| + 1)}.$$
The innermost sum actually simplifies to $\sum_{m=2}^{\ell} (-1)^{\ell-m} \binom{\ell-1}{m-1} \log m$ and only grows very slowly (it is $O(\log \log \ell)$, cf.~\cite[Theorem 4]{FlajoletS95}). Thus the sum over $\ell$ converges rapidly for every tree $S$. Moreover, it is $O(c_S^2)$ as $c_S \to 0$. One therefore gets a good numerical approximation by determining $c_S$ for small trees and only taking the sum over these small trees. For the ten digits given in the statement of the theorem, it was sufficient to consider trees $S$ with up to $20$ vertices.

\section{Conclusion}

Our main theorems are quite general and cover many different types of trees as well as different notions of distinctness. As the examples with explicit constants show, the upper and lower bounds they provide are typically quite close. Nevertheless, the following natural question arises from our results: for the random variables $J_n$ and $K_n$ as defined in Theorem~\ref{thm:plane-simply-generated} and Theorem~\ref{thm:simply-generated-unordered} respectively, are there always constants $c_J$ and $c_K$ such that
\begin{align*}
\Ex(J_n) =  \frac{c_J n}{\sqrt{\log n}}(1+o(1)), \qquad \Ex(K_n) =  \frac{c_K n}{\sqrt{\log n}}(1+o(1)),
\end{align*}
and
\begin{align*}
\frac{J_n}{n/\sqrt{\log n}} \overset{p}{\to} c_J, \qquad \frac{K_n}{n/\sqrt{\log n}} \overset{p}{\to} c_K\ ?
\end{align*}
In order to prove such estimates, it seems essential to gain a better understanding of the different additive functionals that we employed in the proofs of these theorems, in particular their distributions further away from the mean values. Analogous results for increasing trees would be equally interesting.

\bibliographystyle{plain}
\bibliography{bib}

\begin{thebibliography}{10}

\bibitem{AbiteboulBV15}
Serge Abiteboul, Pierre Bourhis, and Victor Vianu.
\newblock Highly expressive query languages for unordered data trees.
\newblock {\em Theory of Computing Systems}, 57(4):927--966, 2015.

\bibitem{AhoSU86}
Alfred~V. Aho, Ravi Sethi, and Jeffrey~D. Ullman.
\newblock {\em Compilers: Principles, Techniques, and Tools}.
\newblock Addison-Wesley series in computer science / World student series
  edition. Addison-Wesley, 1986.

\bibitem{aldous91}
David Aldous.
\newblock Asymptotic fringe distributions for general families of random trees.
\newblock {\em The Annals of Applied Probability}, 1(2):228--266, 1991.

\bibitem{BergeronFS92}
Fran\c{c}ois Bergeron, Philippe Flajolet, and Bruno Salvy.
\newblock Varieties of increasing trees.
\newblock In {\em C{AAP} '92 ({R}ennes, 1992)}, volume 581 of {\em Lecture
  Notes in Comput. Sci.}, pages 24--48. Springer, Berlin, 1992.

\bibitem{BodiniGGLN20}
Olivier Bodini, Antoine Genitrini, Bernhard Gittenberger, Isabella Larcher, and
  Mehdi Naima.
\newblock Compaction for two models of logarithmic-depth trees: Analysis and
  experiments, 2020.
\newblock arXiv:2005.12997.

\bibitem{bonaf09}
Mikl{\'o}s B{\'o}na and Philippe Flajolet.
\newblock Isomorphism and symmetries in random phylogenetic trees.
\newblock {\em Journal of Applied Probability}, 46(4):1005--1019, 2009.

\bibitem{BonevaCS15}
Iovka Boneva, Radu Ciucanu, and Slawek Staworko.
\newblock Schemas for unordered {XML} on a {DIME}.
\newblock {\em Theory of Compuing Systems}, 57(2):337--376, 2015.

\bibitem{MLMN13}
Mireille Bousquet-M{\'e}lou, Markus Lohrey, Sebastian Maneth, and Eric Noeth.
\newblock {XML} compression via {DAG}s.
\newblock {\em Theory of Computing Systems}, 57(4):1322--1371, 2015.

\bibitem{Bry92}
Randal~E. Bryant.
\newblock Symbolic boolean manipulation with ordered binary-decision diagrams.
\newblock {\em ACM Computing Surveys}, 24(3):293--318, 1992.

\bibitem{BuGrKo03}
Peter Buneman, Martin Grohe, and Christoph Koch.
\newblock Path queries on compressed {XML}.
\newblock In Johann~Christoph Freytag et~al., editors, {\em Proceedings of the
  29th Conference on Very Large Data Bases, VLDB 2003}, pages 141--152. Morgan
  Kaufmann, 2003.

\bibitem{dennertgr10}
Florian Dennert and Rudolf Gr{\"u}bel.
\newblock On the subtree size profile of binary search trees.
\newblock {\em Combinatorics, Probability and Computing}, 19(4):561--578, 2010.

\bibitem{Devroye98}
Luc Devroye.
\newblock On the richness of the collection of subtrees in random binary search
  trees.
\newblock {\em Information Processing Letters}, 65(4):195--199, 1998.

\bibitem{devroye14}
Luc Devroye and Svante Janson.
\newblock Protected nodes and fringe subtrees in some random trees.
\newblock {\em Electronic Communications in Probability}, 19:1--10, 2014.

\bibitem{Drmota09}
Michael Drmota.
\newblock {\em Random Trees: An Interplay Between Combinatorics and
  Probability}.
\newblock Springer, 1st edition, 2009.

\bibitem{FengM10}
Qunqiang Feng and Hosam~M. Mahmoud.
\newblock On the variety of shapes on the fringe of a random recursive tree.
\newblock {\em Journal of Applied Probability}, 47(1):191--200, 2010.

\bibitem{FillK2004}
James~Allen Fill and Nevin Kapur.
\newblock Limiting distributions for additive functionals on {C}atalan trees.
\newblock {\em Theoret. Comput. Sci.}, 326(1-3):69--102, 2004.

\bibitem{finch03}
Steven~R. Finch and Gian-Carlo Rota.
\newblock {\em Mathematical Constants}.
\newblock Encyclopedia of Mathematics and its Applications. Cambridge
  University Press, 2003.

\bibitem{FlajoletGM97}
Philippe Flajolet, Xavier Gourdon, and Conrado Mart{\'{i}}nez.
\newblock Patterns in random binary search trees.
\newblock {\em Random Structures \& Algorithms}, 11(3):223--244, 1997.

\bibitem{FlajoletS95}
Philippe Flajolet and Robert Sedgewick.
\newblock Mellin transforms and asymptotics: finite differences and {R}ice's
  integrals.
\newblock {\em Theoret. Comput. Sci.}, 144(1-2):101--124, 1995.
\newblock Special volume on mathematical analysis of algorithms.

\bibitem{FlajoletS09}
Philippe Flajolet and Robert Sedgewick.
\newblock {\em Analytic Combinatorics}.
\newblock Cambridge University Press, 2009.

\bibitem{FlajoletSS90}
Philippe Flajolet, Paolo Sipala, and Jean-Marc Steyaert.
\newblock Analytic variations on the common subexpression problem.
\newblock In {\em Proceedings of the 17th International Colloquium on Automata,
  Languages and Programming, ICALP 1990}, volume 443 of {\em Lecture Notes in
  Computer Science}, pages 220--234. Springer, 1990.

\bibitem{FrGrKo03}
Markus Frick, Martin Grohe, and Christoph Koch.
\newblock Query evaluation on compressed trees (extended abstract).
\newblock In {\em Proceedings of the 18th {A}nnual {IEEE} {S}ymposium on
  {L}ogic in {C}omputer {S}cience, LICS 2003}, pages 188--197. {IEEE}
  {C}omputer {S}ociety {P}ress, 2003.

\bibitem{Fuchs2012}
Michael Fuchs.
\newblock Limit theorems for subtree size profiles of increasing trees.
\newblock {\em Combinatorics, Probability and Computing}, 21(3):412--441, 2012.

\bibitem{GanardiHLS19}
Moses Ganardi, Danny Hucke, Markus Lohrey, and Louisa~Seelbach Benkner.
\newblock Universal tree source coding using grammar-based compression.
\newblock {\em {IEEE} Transactions on Information Theory}, 65(10):6399--6413,
  2019.

\bibitem{Gut2005}
Allan Gut.
\newblock {\em Probability: A Graduate Course}.
\newblock Springer, 2005.

\bibitem{HolmgrenJS17}
Cecilia Holmgren, Svante Janson, and Matas \v{S}ileikis.
\newblock Multivariate normal limit laws for the numbers of fringe subtrees in
  {$m$}-ary search trees and preferential attachment trees.
\newblock {\em Electron. J. Combin.}, 24(2):Paper No. 2.51, 49 pp., 2017.

\bibitem{janson05}
Svante Janson.
\newblock Random cutting and records in deterministic and random trees.
\newblock {\em Random Structures \& Algorithms}, 29(2):139--179, 2006.

\bibitem{Janson12}
Svante Janson.
\newblock {Simply generated trees, conditioned Galton–Watson trees, random
  allocations and condensation}.
\newblock {\em Probability Surveys}, 9:103--252, 2012.

\bibitem{Janson16}
Svante Janson.
\newblock Asymptotic normality of fringe subtrees and additive functionals in
  conditioned galton-watson trees.
\newblock {\em Random Struct. Algorithms}, 48(1):57--101, 2016.

\bibitem{jordan1869}
C.~Jordan.
\newblock Sur les assemblages de lignes.
\newblock {\em Journal f{\"u}r die reine und angewandte Mathematik},
  70:185--190, 1869.

\bibitem{KiefferYS09}
John~C. Kieffer, En-Hui Yang, and Wojciech Szpankowski.
\newblock Structural complexity of random binary trees.
\newblock In {\em Proceedings of the 2009 IEEE International Symposium on
  Information Theory, {ISIT} 2009}, pages 635--639. IEEE, 2009.

\bibitem{Knuth1998}
Donald~E. Knuth.
\newblock {\em The art of computer programming. {V}ol. 3}.
\newblock Addison-Wesley, Reading, MA, 1998.

\bibitem{Kolchin86}
Valentin~F Kolchin.
\newblock {\em Random mappings / Valentin F. Kolchin}.
\newblock Translations series in mathematics and engineering. Optimization
  Software, New York, 1986.

\bibitem{LohreyMR17}
Markus Lohrey, Sebastian Maneth, and Carl~Philipp Reh.
\newblock Compression of unordered {XML} trees.
\newblock In {\em 20th International Conference on Database Theory, {ICDT}
  2017, March 21-24, 2017, Venice, Italy}, pages 18:1--18:17, 2017.

\bibitem{MeirM78}
A.~Meir and J.~W. Moon.
\newblock On the altitude of nodes in random trees.
\newblock {\em Canadian Journal of Mathematics}, 30(5):997--1015, 1978.

\bibitem{Otter1948}
Richard Otter.
\newblock The number of trees.
\newblock {\em Ann. of Math. (2)}, 49:583--599, 1948.

\bibitem{Panholzer2007level}
Alois Panholzer and Helmut Prodinger.
\newblock Level of nodes in increasing trees revisited.
\newblock {\em Random Structures Algorithms}, 31(2):203--226, 2007.

\bibitem{PatersonW78}
Mike Paterson and Mark~N. Wegman.
\newblock Linear unification.
\newblock {\em Journal of Computer and System Sciences}, 16(2):158--167, 1978.

\bibitem{RalaivaosaonaW19}
Dimbinaina Ralaivaosaona and Stephan Wagner.
\newblock A central limit theorem for additive functionals of increasing trees.
\newblock {\em Combin. Probab. Comput.}, 28(4):618--637, 2019.

\bibitem{RalaivaosaonaW15}
Dimbinaina Ralaivaosaona and Stephan~G. Wagner.
\newblock Repeated fringe subtrees in random rooted trees.
\newblock In {\em Proceedings of the Twelfth Workshop on Analytic Algorithmics
  and Combinatorics, {ANALCO} 2015}, pages 78--88. {SIAM}, 2015.

\bibitem{Ruskey1992Generating}
Frank Ruskey.
\newblock Generating linear extensions of posets by transpositions.
\newblock {\em J. Combin. Theory Ser. B}, 54(1):77--101, 1992.

\bibitem{SeelbachLo18}
Louisa {Seelbach Benkner} and Markus Lohrey.
\newblock Average case analysis of leaf-centric binary tree sources.
\newblock In {\em 43rd International Symposium on Mathematical Foundations of
  Computer Science, {MFCS} 2018, August 27-31, 2018, Liverpool, {UK}}, pages
  16:1--16:15, 2018.

\bibitem{SeelbachWagner20}
Louisa {Seelbach Benkner} and Stephan~G. Wagner.
\newblock On the collection of fringe subtrees in random binary trees.
\newblock In Yoshiharu Kohayakawa and Fl{\'{a}}vio~Keidi Miyazawa, editors,
  {\em {LATIN} 2020: Theoretical Informatics - 14th Latin American Symposium,
  S{\~{a}}o Paulo, Brazil, January 5-8, 2021, Proceedings}, volume 12118 of
  {\em Lecture Notes in Computer Science}, pages 546--558. Springer, 2020.

\bibitem{BIRStalk}
Stephan Wagner.
\newblock The number of automorphisms of random trees.
\newblock BIRS Workshop in Analytic and Probabilistic Combinatorics, October
  2016.

\bibitem{ZhangYK14}
Jie Zhang, En-Hui Yang, and John~C. Kieffer.
\newblock A universal grammar-based code for lossless compression of binary
  trees.
\newblock {\em IEEE Transactions on Information Theory}, 60(3):1373--1386,
  2014.

\bibitem{ZhangDW15}
Sen Zhang, Zhihui Du, and Jason~Tsong{-}Li Wang.
\newblock New techniques for mining frequent patterns in unordered trees.
\newblock {\em {IEEE} Transactions on Cybernetics}, 45(6):1113--1125, 2015.

\end{thebibliography}

\end{document}